\theoremstyle{definition}
\newtheorem{theorem}{Theorem}
\newtheorem{lemma}{Lemma}
\newtheorem{definition}{Definition}
\newtheorem{example}{Example}
\newcounter{nex}
\newcommand{\CC}{\mathbbmss{C}}
\DeclareMathOperator{\ind}{ind}
\DeclareMathOperator{\matt}{\bf mat}
\DeclareMathOperator{\diag}{diag}
\DeclareMathOperator{\squeeze}{squeeze}
\newcommand{\gc}{\mathcal{G}}
\newcommand{\hc}{\mathcal{H}}
\newcommand{\cc}{\mathcal{C}}
\newcommand{\dc}{\mathcal{D}}
\newcommand{\ic}{\mathcal{I}}
\newcommand{\pc}{\mathcal{P}}
\newcommand{\xc}{\mathcal{X}}
\newcommand{\zc}{\mathcal{Z}}
\newcommand{\tc}{\mathcal{T}}
\begin{document}

\title{On the extensions of the GD inverse of tensors via the M-Product}

\author{
Hongwei Jin,\thanks{School of Mathematics and physics, Guangxi Minzu University, 530006, Nanning, PR China; Center for Applied Mathematics of Guangxi, Guangxi Minzu University, Nanning, 530006, PR China. { E-mail address}:  jhw$\_$math@126.com} \quad
Siran Chen,\thanks{School of Mathematics and physics, Guangxi Minzu University, 530006,
Nanning, PR China. { E-mail address}:  18877545298@163.com}\quad
Shaowu Huang,\thanks{Corresponding author. School of Mathematics and Finance, Putian University, Putian, PR China. { E-mail address}:  shaowu2050@126.com}\quad
Predrag S. Stanimirovi\'{c}\thanks{Faculty of Sciences and Mathematics, University of Ni\u{s}, Ni\u{s}, Serbia.  E-mail address:   pecko@pmf.ni.ac.rs}
}

\date{}

\maketitle

\begin{abstract}
 We study extensions of the GD tensor inverse using the  M-product.
 The aim of current research is threefold.
 In the first place, the tensor GD inverse under the M-product is introduced and considered.
 We give the several properties and representations of the GD inverse using the core nilpotent decomposition and then establish the reverse-order law rules for the GD inverse.
 Second, the tensor GDMP inverse is studied and the corresponding numerical algorithm is given.
In addition, the reverse- and forward-order laws of the GDMP inverse are established.
 Third, the GD-Star tensor inverse under the M-product is introduced and studied.
 Finally, the GD inverse, GDMP inverse and GD-Star inverse solutions of multilinear equations are investigated.
Illustrative numerical calculation is performed.

\noindent{\bf Keywords}: M-Product, Tensors, GD inverse, Reverse order law, Tensor equations

\noindent AMS classification: 15A18, 15A69.
\end{abstract}


\section{Introduction}

Tensor quantization is typically an effective way to encode and analyze multidimensional data, such as images, movies, sounds, signals, etc.
Thus, the study of multidimensional arrays, or tensors, has gained popularity in recent years.
An array of multidimensional data can be considered as a tensor of the form $\gc=(\gc_{{i_1}i_2\ldots{i_m}})\in\mathbb{C}^{{n_1}\times{n_2}\times\ldots{n_m}}$ with entries over a complex field.
The positive integer $m$ is the order of the tensor $\gc$.
Scalars, vectors, and matrices are also known as zero-order, first-order, and second-order tensors, respectively.

The development of the tensor computation and multidimensional data analysis theories has drawn increasing attention over time.
Kilmer {\it et al.} {\rm \cite{K1}} first proposed the T-product.
Jin {\it et al.} {\rm \cite{J1}} proposed the cosine change product.
Kernfeld {\it et al.} {\rm \cite{KK}} proposed the M-product, and Einstein {\rm \cite{A1}} proposed the Einstein product.
Shao {\rm \cite{Shao1}} proposed the general product.
The $n$-mode product has been proposed by Qi {\rm \cite{L1}}.
Additionally, these tensor products have been studied in many fields of mathematics and applications, and they have garnered a lot of interest in recent works like: face recognition {\rm \cite{KBHH,Chen}}, low-rank tensor recovery {\rm \cite{Che,QW}}, robust tensor Principal Component Analysis {\rm \cite{kong1,L3}}, date completion and denoising {\rm \cite{Hu,Long,A2}}, image processing {\rm \cite{KBHH,MSL,Soltani,Tarzanagh}}, signal processing {\rm \cite{1,RU,2,3}}, computer vision {\rm \cite{4,5}}.

The predetermined time convergence neural networks for solving time-varying nonsingular multilinear tensor equations was studied in \cite{Wang4,Wang2}.
The  non-singular tensor equations by randomizing the Kaczmarz class method were also solved.
Wei {\it et al.} \cite{Wei} studied the neural network model for time-varying tensor complementarity problem.
Kernfeld {\it et al.} in \cite{KK} studied tensor products under invertible linear transformations.
Kilmer {\it et al.} in \cite{K3} investigated tensor-tensor products with application to optimal representation and compression.
K gave The decomposition of higher-order tensors and their applications was given by olda {\it et al.} in \cite{KB}.
Kilmer {\it et al.} in \cite{KM} explored an alternative representation of tensors associated with the matrix decomposition, proposed a new decomposition of tensors as the product of tensors, and defined the closed multiplication operation between tensors.
The tensor neural network model for generating the tensor singular value decomposition was studied in \cite{Wang}.
Wang {\it et al.} investigated the stochastic conditions of tensor functions based on tensor-tensor products \cite{Miao3}.
Further, Wang {\it et al.} in \cite{Wang1,Wang3} studied the tensor complementarity problem.
Shao {\it et al.} studied the asymmetric algebraic Riccati equations under tensor products \cite{Shao}.

The combination of tensors and generalized inverses can provide new methods and ideas for processing the high-dimensional data, and help to solve the complex mathematical and engineering problems in various fields.
Sun {\it et al.} used the Einstein product to define the Moore-Penrose (MP) inverse of tensors, and obtained explicit representations for the MP inverse of some tensor blocks \cite{Sun}.
Generalized tensor functions based on tensor singular value decomposition of the T-product are studied in \cite{Miao,Miao2}.
T-Jordan canonical form and the T-Drazin inverse were also studied.
Ji {\it et al.} in \cite{Ji2} defined the index of the  even order tensor and  generalized the concept of the Drazin inverse of the square matrices to the even order tensors.
Jin {\it et al.} \cite{Jin} used the tensor equations to establish tensor generalized inverses, studied the least squares solution of the tensor equation, and constructed an algorithm for computing the MP inverse of any tensor.
Liang {\it et al.} \cite{Liang} generalized the concept of  the MP inverse of matrices to further results of the Einstein product in the case of tensors and got its application to tensor approximation problems.
The authors of \cite{Che1} studied the  calculation of a complete orthogonal decomposition of a third-order tensor, known as the t-URV decomposition.
Cong {\it et al.} in \cite{Cong} showed that the Moore-inverse of the tensor can be represented by T-SVD with T-product, and gave the equivalent conditions for the stable perturbation of the MP inverse.
Further results on the generalized inverse of tensors on the Einstein product were investigated in \cite{Behera}.
The binuclear generalized inversions of the third-order dual tensors based on the T-product was studied in \cite{Liu}.
Cong {\it et al.} studied the characterization and perturbation of the EP inverse of the tensor kernel based on the T-product \cite{Cong2}.
The core and core-EP inverses of tensors was studied in \cite{Sahoo}.
Cao {\it et al.} studied the perturbation analysis of the tensor inversions and  tensor systems in terms of the T-product \cite{Cao,Cao1}.
Several tensor inequalities based on the T-product were also considered.
The weighted generalized tensor functions based on tensor products and their applications were studied in \cite{Liu1}.

The M-product of tensors is one of the important operations for  processing the data and information in the  multidimensional space.
The authors of \cite{SPBS} introduced the concepts of Drazin inverse and core-EP inverse of tensors over the M-product and studied the CMP, DMP, and MPD inverses of tensors.
Jin {\it et al.} \cite{JXWL} studied the MP inverse of tensors with M-product.
They also gave the conditions for understanding it as the least square solution or minimum norm solution, and discussed the necessary and sufficient conditions for the reverse order law of the  MP inverse.
The notions of the Drazin and core-EP inverses on tensors via M-product were originated and cosidered in \cite{[386]}.

In this context, we will study the extensions of the tensor GD inverse by the M-product.
This article is structured as follows. In the second section, we firstly give the terms and symbols needed for this work, and then introduce the M-product between two tensors. In the third section, we introduce the GD inverse under the M-product and give the equivalent expressions of the GD inverse by using the core nilpotent decomposition.
Then, we describe the reverse order law of the GD inverse.
Meanwhile, we study the GDMP inverse of the tensor and give the corresponding numerical algorithm for calculating this inverse.
We also give the reverse-order law and forward-order law of this inverse.
Finally, the GD-Star inverse of the tensor under the M-product is described.
The numerical calculation of the GD-Star inverse is established and the corresponding properties are given.
In the fourth section, as an application, solutions of the GD inverse, the GDMP inverse and the GD-Star inverse of multilinear equations are given.
Numerical examples are presented to verify obtained theoretical outcomes.

\section{Basic notions and preliminaries}\label{two}

Following standard notation, matrices and higher order tensors will be denoted like $A, \gc$, respectively.
Moreover, $\gc(i,:,:)$, $\gc(:,i,:)$ and $\gc(:, :, i)$ correspond to $i$th horizontal slice, lateral slice and frontal slice, respectively.
For simplicity, the $i$th frontal slice will be denoted by $\gc^{(i)}$.



In the following, we will explain the transformation $L(\gc)$ with domain over tensors.
The notation $i\in[n]$ will be used to denote the index range $i=1,\ldots,n$.

\begin{definition}{\rm \cite{KB}}
The k-mode product of a tensor $\gc \in \mathbb{C}^{\eta_1\times \eta_2\times\cdots\times \eta_p}$ with a matrix $N\in\mathbb{C}^{J\times \eta_k}$ is termed as $\gc\times_kN$
and expresses as
\begin{equation*}
  (\gc\times_kN)_{i_1 i_2\ldots i_{k-1}ji_{k+1}\ldots i_p},\ j\in[J].
\end{equation*}
\end{definition}
Notice that
\begin{equation}\label{bb}
  L(\gc)=\gc\times_{\!\!3}{M}, \ \ \ L^{-1}(\gc)=\gc\times_{\!\!3}{M}^{-1},
\end{equation}
such that ${M}\in\mathbb{C}^{\eta_3\times \eta_3}$ is a nonsingular matrix.

\begin{definition}{\rm \cite{KK}}
The face-wise product of $\gc\triangle\hc$ of $\gc \in \mathbb{C}^{\eta_1\times \eta_2\times \eta_3}$ and $\hc \in \mathbb{C}^{\eta_2 \times l\times \eta_3}$ is defined as
$$(\gc\triangle\hc)^{(i)}=\gc^{(i)}\hc^{(i)}, \ i\in[\eta_3],$$
wherein $(\gc\triangle\hc)^{(i)}$, $\gc^{(i)}$, $\hc^{(i)}$ denote the $i$th frontal slice of $\gc\triangle\hc$, $\gc$ and $\hc$, respectively.
\end{definition}



\begin{definition}{\rm \cite{KK,K3}}
The $\,\star_{\!\!\! M} \, $ product between $\gc \in \mathbb{C}^{\eta_1\times \eta_2\times \eta_3}$ and $\hc \in \mathbb{C}^{\eta_2 \times l\times \eta_3}$ is defined by
\begin{equation}\label{XZ}
\gc\,\star_{\!\!\! M} \, \hc=L^{-1}[L(\gc)\triangle L(\hc)].
\end{equation}
\end{definition}
The product defined in \eqref{XZ} is known as the "M-product" between third-order tensors.
In fact, $\gc\times_{\!\!3}N$ is computed by using the matrix-matrix product
\begin{equation}
  \mathcal{Y}=\gc\times_{\!\!3}N\Leftrightarrow\mathcal{Y}_{(3)}
  =N\gc_{(3)},
\end{equation}
where $\gc_{(3)}\in\CC^{\eta_3\times \eta_1\eta_2}$ denotes the mode-3 unfolding of $\gc$, which is got by the \textbf{squeeze} function proposed in \cite{KBHH}.
More precisely,
\begin{equation}\label{as}
 \gc_{(3)}=\big[\big(\squeeze(\overrightarrow{\gc}_1)\big)^T, \big(\squeeze(\overrightarrow{\gc}_2)\big)^T,\ldots, \big(\squeeze(\overrightarrow{\gc}_{\eta_2})\big)^T\big],
\end{equation}
where $\overrightarrow{\gc}_i$, $i\in\{1,\ldots,\eta_2\}$, are the lateral slices of $\gc$ and $\squeeze(\cdot)$ is defined by
\begin{equation*}
  A_j=\squeeze(\overrightarrow{\gc}_j)\Rightarrow A_j(i,k)=\overrightarrow{\gc}_j(i,1,k), \ i\in[\eta_1], \ j\in[\eta_2], \   k\in[\eta_3].
\end{equation*}

Applying (\ref{bb}),  it may be desirable to have  an alternative expression for (\ref{XZ}).
More precisely,
\begin{equation}\label{nb}
\gc\,\star_{\!\!\! M} \, \hc=L^{-1}[L(\gc)\triangle L(\hc)]=
[(\gc\times_3{M})\triangle(\hc\times_3{M})]\times_3{M}^{-1}.
\end{equation}
Notice that one can get the T-Product choosing ${M}$ as the normalized DFT matrix in (\ref{nb})(see  \cite{KM}).
An alternative definition of the M-product is given by
\begin{equation}\label{vvb}
\matt(\gc)=\diag\left(L(\gc)^{(1)},\ldots,L(\gc)^{(\eta_3)}\right)
=\diag\left((\widehat{\gc})^{(1)},\ldots,(\widehat{\gc})^{(\eta_3)}\right),
\end{equation}
and $\matt^{-1}(\cdot)$ is the reverse operation of $\matt(\cdot)$, that is
                       \begin{equation}\label{mmm}
                         \gc=\matt^{-1}[\matt(\gc)].
\end{equation}
 Then, one can has the alternative definition of the M-product.
\begin{definition}{\rm \cite{SPBS}}
The $\,\star_{\!\!\! M} \, $ product over arguments $\gc \in \mathbb{C}^{\eta_1\times \eta_2\times \eta_3}$ and $\hc \in \mathbb{C}^{\eta_2 \times l\times \eta_3}$ is defined by
\begin{equation}\label{zzz}
\gc\,\star_{\!\!\! M} \, \hc=\matt^{-1}[\matt(\gc)\cdot\matt(\hc)].
\end{equation}
\end{definition}

In the following, we investigate elementary operations based on the M-Product.

\begin{lemma}{\rm \cite{KK,K3}} 
If $\gc, \hc, \cc$ are third-order tensors with proper size, then the subsequent statements are valid:
\begin{description}
  \item  [(a)] $\gc\,\star_{\!\!\! M} \, (\hc+\cc) = \gc\,\star_{\!\!\! M} \, \hc + \gc\,\star_{\!\!\! M} \, \cc$.
\item [(b)] $(\gc+\hc)\,\star_{\!\!\! M} \, \cc = \gc\,\star_{\!\!\! M} \, \cc + \hc\,\star_{\!\!\! M} \, \cc$.
\item [(c)] $(\gc\,\star_{\!\!\! M} \, \hc)\,\star_{\!\!\! M} \, \cc = \gc\,\star_{\!\!\! M} \, (\hc\,\star_{\!\!\! M} \, \cc)$.
\end{description}
\end{lemma}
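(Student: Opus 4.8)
The plan is to reduce all three identities to the corresponding algebraic properties of ordinary matrix multiplication, exploiting the fact that the operator $\matt(\cdot)$ from \eqref{vvb} is a linear bijection that turns the M-product into matrix multiplication. Concretely, from the definition via \eqref{zzz}, applying $\matt$ to $\gc\,\star_{\!\!\! M} \,\hc=\matt^{-1}[\matt(\gc)\cdot\matt(\hc)]$ and using \eqref{mmm} gives the intertwining identity
\begin{equation*}
\matt(\gc\,\star_{\!\!\! M} \,\hc)=\matt(\gc)\cdot\matt(\hc),
\end{equation*}
valid whenever the dimensions are compatible, so that the block sizes line up in the matrix product.

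First I would record the two structural facts that do the work. (i) $\matt$ is \emph{linear}: by \eqref{bb} the transform $L(\gc)=\gc\times_3 M$ is linear in $\gc$ (mode-$3$ multiplication amounts to left multiplication of the mode-$3$ unfolding by $M$), and assembling the frontal slices $\widehat{\gc}^{(1)},\ldots,\widehat{\gc}^{(\eta_3)}$ into the block-diagonal matrix in \eqref{vvb} is again linear; hence $\matt(\gc+\hc)=\matt(\gc)+\matt(\hc)$ whenever the sizes agree. (ii) $\matt$ is \emph{injective}: $M$ is nonsingular, so $L$ is invertible, and \eqref{mmm} exhibits an explicit left inverse $\matt^{-1}$; thus $\matt(\xc)=\matt(\yc)$ forces $\xc=\yc$.

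With these in hand, part (a) follows from the chain $\matt\big(\gc\,\star_{\!\!\! M} \,(\hc+\cc)\big)=\matt(\gc)\,\matt(\hc+\cc)=\matt(\gc)\matt(\hc)+\matt(\gc)\matt(\cc)=\matt(\gc\,\star_{\!\!\! M} \,\hc)+\matt(\gc\,\star_{\!\!\! M} \,\cc)=\matt\big(\gc\,\star_{\!\!\! M} \,\hc+\gc\,\star_{\!\!\! M} \,\cc\big)$, where the middle equality is left-distributivity of matrix multiplication and the last uses linearity of $\matt$; injectivity of $\matt$ then yields (a). Part (b) is identical, using right-distributivity. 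For part (c) I would run the same argument with associativity of matrix multiplication: $\matt\big((\gc\,\star_{\!\!\! M} \,\hc)\,\star_{\!\!\! M} \,\cc\big)=\big(\matt(\gc)\matt(\hc)\big)\matt(\cc)=\matt(\gc)\big(\matt(\hc)\matt(\cc)\big)=\matt\big(\gc\,\star_{\!\!\! M} \,(\hc\,\star_{\!\!\! M} \,\cc)\big)$, and cancel $\matt$ by injectivity.

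The only genuinely delicate point — bookkeeping rather than mathematics — is to confirm that "proper size" makes every product above well defined: if $\gc\in\mathbb{C}^{\eta_1\times\eta_2\times\eta_3}$, $\hc\in\mathbb{C}^{\eta_2\times l\times\eta_3}$, $\cc\in\mathbb{C}^{l\times q\times\eta_3}$ (with $\hc,\cc$ sharing $\gc$'s first two dimensions wherever a sum is formed), then the frontal-slice blocks have conformable shapes and the block-diagonal products in \eqref{zzz} are defined slice by slice. An essentially equivalent route that avoids $\matt$ is to work directly from \eqref{XZ}: the face-wise product $\triangle$ is slice-wise matrix multiplication, hence slice-wise distributive and associative, while $L,L^{-1}$ are linear by \eqref{bb}; pushing $L$ through and applying the face-wise definition slice by slice gives the three claims. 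I would present the $\matt$-version as the main proof, since it is the cleanest.
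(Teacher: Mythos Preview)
Your argument is correct. Note, however, that the paper does not supply its own proof of this lemma: it is stated with citations to \cite{KK,K3} and left unproved, so there is nothing to compare against. Your reduction via the linear bijection $\matt$ (or, equivalently, via $L$ and the face-wise product) is exactly the natural way to establish these identities and would serve perfectly well as the missing justification.
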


\begin{definition} {\rm \cite{KK}}
The  {\em identity tensor} $\ic \in \mathbb{C}^{n \times n\times \eta_3}$ is determined by $L(\ic)^{(i)}={I}_n$, $i\in [\eta_3]$.
\end{definition}

According to the definition of $L(\ic)^{(i)}$, it can be concluded
$$L(\ic)\triangle L(\gc)=L(\gc)\triangle L(\ic)=L(\gc),$$
which implies
$$\gc\,\star_{\!\!\! M} \, \ic=\ic\,\star_{\!\!\! M} \, \gc=\gc.$$

\begin{definition}{\rm \cite{KK}}
The tensor $\gc \in \mathbb{C}^{\eta_1 \times \eta_1\times \eta_3}$ is invertible if there exists $\xc \in \mathbb{C}^{\eta_1 \times \eta_1\times \eta_3}$ such that
$$\gc\,\star_{\!\!\! M} \, \xc=\xc\,\star_{\!\!\! M} \, \gc=\ic.$$
In such conditions, $\xc$ represents the inverse $\gc^{-1}$ of $\gc$.
\end{definition}
\noindent In the following, we give the definition of the conjugate transpose of the tensor $\gc$.

\begin{definition}{\rm \cite{KK}}
The {\em conjugate-transpose} of $\gc \in \mathbb{C}^{\eta_1 \times \eta_2 \times \eta_3}$, marked with $\gc^*$, is the $\eta_2 \times \eta_1 \times \eta_3$ tensor defined by $L(\gc^*)^{(i)}=[L(\gc)^{(i)}]^*$, $i\in[\eta_3]$.
\end{definition}

\begin{lemma} {\rm \cite{KK,K3}}\label{ppaa}
Arbitrary tensors $\gc \in \mathbb{C}^{\eta_1\times \eta_2\times \eta_3}$ and $\hc \in \mathbb{C}^{\eta_2 \times \eta_4 \times \eta_3}$ satisfy $(\gc\,\star_{\!\!\! M} \, \hc)^*=\hc^*\,\star_{\!\!\! M} \, \gc^*$.
\end{lemma}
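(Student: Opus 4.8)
The plan is to transfer the identity to matrices via the block-diagonal operator $\matt(\cdot)$ of \eqref{vvb}, using the reformulation \eqref{zzz} of the M-product, so that the statement reduces to the classical fact $(PQ)^*=Q^*P^*$ for matrices together with the observation that $\matt$ is a bijection.

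First I would establish the compatibility of $\matt(\cdot)$ with the tensor conjugate transpose, namely $\matt(\gc^*)=\matt(\gc)^*$. This is immediate from the slice-wise definition $L(\gc^*)^{(i)}=[L(\gc)^{(i)}]^*$, $i\in[\eta_3]$, combined with the fact that the conjugate transpose of a block-diagonal matrix $\diag(A_1,\ldots,A_{\eta_3})$ is $\diag(A_1^*,\ldots,A_{\eta_3}^*)$; the same identity holds for $\hc$. The block sizes are compatible: with $\gc\in\mathbb{C}^{\eta_1\times\eta_2\times\eta_3}$ and $\hc\in\mathbb{C}^{\eta_2\times\eta_4\times\eta_3}$, each diagonal block $L(\gc)^{(i)}$ is $\eta_1\times\eta_2$ and $L(\hc)^{(i)}$ is $\eta_2\times\eta_4$, so the matrix product $\matt(\gc)\matt(\hc)$ is well-defined. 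Then I would simply chain
\[
\matt\big((\gc\,\star_{\!\!\! M} \,\hc)^*\big)=\matt(\gc\,\star_{\!\!\! M} \,\hc)^*=\big(\matt(\gc)\matt(\hc)\big)^*=\matt(\hc)^*\matt(\gc)^*=\matt(\hc^*)\matt(\gc^*)=\matt\big(\hc^*\,\star_{\!\!\! M} \,\gc^*\big),
\]
where the second and last equalities use \eqref{zzz}, the middle equality uses the reverse-order law for the matrix conjugate transpose, and the remaining ones use the compatibility just proved. Applying the inverse operation $\matt^{-1}(\cdot)$ of \eqref{mmm} to both ends yields $(\gc\,\star_{\!\!\! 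M} \,\hc)^*=\hc^*\,\star_{\!\!\! M} \,\gc^*$.

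The only step requiring any care is the compatibility $\matt(\gc^*)=\matt(\gc)^*$ — equivalently, that $L$ (hence $\matt$) intertwines the tensor conjugate transpose with the ordinary block-wise one — but this is essentially built into the definitions. An alternative, $\matt$-free route would apply $L$ to \eqref{XZ} to obtain $L(\gc\,\star_{\!\!\! M} \,\hc)=L(\gc)\triangle L(\hc)$ and then compare $i$th frontal slices, using $(\mathcal{X}\triangle\mathcal{Y})^{(i)}=\mathcal{X}^{(i)}\mathcal{Y}^{(i)}$ and $L(\gc^*)^{(i)}=[L(\gc)^{(i)}]^*$; this reaches the same conclusion with the same single nontrivial observation, so I would present the $\matt$-based argument for brevity.
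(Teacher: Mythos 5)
Your proof is correct. Note that the paper itself states this lemma without proof, citing \cite{KK,K3}; your argument via $\matt(\cdot)$ — reducing to $\matt(\gc^*)=\matt(\gc)^*$ (immediate from the slice-wise definition of the conjugate transpose) and the matrix reverse-order law $(PQ)^*=Q^*P^*$ — is exactly the standard verification one would supply, and all the dimension and bijectivity checks you mention go through.
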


\begin{definition} 
For $\gc \in \CC^{\eta_1\times \eta_1\times \eta_3}$ suppose
$$ 
\matt(\gc)=\diag\left(L(\gc)^{(1)},\ldots,L(\gc)^{(\eta_3)}\right).
$$
Then, the index of $\gc$ is equal to
$\ind(\gc)=\max\limits_{i\in[\eta_3]}\{\ind(L(\gc)^{(i)})\}$.
\end{definition}


\section{The most important results}\label{three0}

In this section, the GD inverse of tensors under the M-product is originated and investigated.

\subsection{The GD inverse of a tensor}
Definition \ref{d07} introduces the GD-inverse in tensor case.
\begin{definition}\label{d07}
Let $\gc \in\mathbb{C}^{\eta_1\times \eta_1\times \eta_3}$ and $\ind(\gc)=k$.
If there exists $\xc \in \mathbb{C}^{\eta_1\times \eta_1\times \eta_3}$ which satisfies
\begin{equation}\label{mp012}
\textnormal{(I)} \ \gc\,\star_{\!\!\! M} \, \xc\,\star_{\!\!\! M} \, \gc = \gc, \quad \textnormal{(II)} \ \xc\,\star_{\!\!\! M} \, \gc^{k+1} = \gc^k, \quad \textnormal{(III)} \ \gc^{k+1}\,\star_{\!\!\! M} \, \xc= \gc^k,
\end{equation}
the tensor $\xc$ is called the {\em GD inverse} of $\gc$ and it is marked with $\gc^{GD}$.
\end{definition}

Notice that the G-Drazin inverse is not unique. We use $\gc\{GD\}=\{\gc^{GD}|\gc \in\mathbb{C}^{\eta_1\times \eta_1\times \eta_3}\}$ to denote the set of the GD inverses of a tensor. Next lemma concerns the core-nilpotent decomposition of a tensor.

\begin{lemma}\label{3}

Let $\gc\in\mathbb{C}^{\eta_1\times \eta_1\times \eta_3}$ satisfy $\ind(\gc)=k$.
Then, the core-nilpotent decomposition of $\gc$ is stated as
\begin{equation*}
  \gc=\pc\,\star_{\!\!\! M} \, \matt^{-1}\left[\diag\left(\left[
                               \begin{array}{cc}
                                 U^{(1)} & O \\
                                O  & N^{(1)} \\
                               \end{array}
                             \right],\ldots,\left[
                               \begin{array}{cc}
                                 U^{(\eta_3)} & O \\
                                 O & N^{(\eta_3)} \\
                               \end{array}
                             \right]
 \right) \right] \,\star_{\!\!\! M} \, \pc^{-1},
\end{equation*}
where $\pc$, $U^{(i)}$, $i\in[\eta_3]$ are nonsingular, and $N^{(i)}$, $i\in[\eta_3]$ are nilpotent.

\end{lemma}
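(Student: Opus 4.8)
The plan is to transport the classical matrix core-nilpotent decomposition through the algebra map $\matt$: I would apply it frontal-slice-by-frontal-slice in the transform domain and then reassemble the pieces into a single tensor factorization.

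First I would record the two facts that make $\matt$ usable here: it is a bijection (by \eqref{mmm}) satisfying $\matt(\ac\,\star_{\!\!\! M} \,\bc)=\matt(\ac)\,\matt(\bc)$ (by \eqref{zzz}) and $\matt(\ic)=I$, while for $\gc\in\mathbb{C}^{\eta_1\times\eta_1\times\eta_3}$ the matrix $\matt(\gc)=\diag\big(L(\gc)^{(1)},\ldots,L(\gc)^{(\eta_3)}\big)$ is block diagonal with square blocks $L(\gc)^{(i)}\in\mathbb{C}^{\eta_1\times\eta_1}$, and $k=\max_{i\in[\eta_3]}\ind\!\big(L(\gc)^{(i)}\big)$ by the definition of $\ind(\gc)$. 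Then, for each $i\in[\eta_3]$, I would invoke the classical core-nilpotent decomposition of the square matrix $L(\gc)^{(i)}$: there is a nonsingular $P_i\in\mathbb{C}^{\eta_1\times\eta_1}$ with $P_i^{-1}\,L(\gc)^{(i)}\,P_i=\mat{U^{(i)}}{O}{O}{N^{(i)}}$, where $U^{(i)}$ is nonsingular (of order $\rk\big(L(\gc)^{(i)\,k}\big)$) and $N^{(i)}$ is nilpotent with $N^{(i)\,k}=O$.

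Next I would set $\pc:=\matt^{-1}\big[\diag(P_1,\ldots,P_{\eta_3})\big]$ and $\qc:=\matt^{-1}\big[\diag(P_1^{-1},\ldots,P_{\eta_3}^{-1})\big]$; multiplicativity and injectivity of $\matt$ then give $\pc\,\star_{\!\!\! M} \,\qc=\qc\,\star_{\!\!\! M} \,\pc=\ic$, so $\pc$ is invertible with $\pc^{-1}=\qc$. Writing $\dc:=\matt^{-1}\!\big[\diag\big(\mat{U^{(1)}}{O}{O}{N^{(1)}},\ldots,\mat{U^{(\eta_3)}}{O}{O}{N^{(\eta_3)}}\big)\big]$, I would apply $\matt$ to the target identity $\gc=\pc\,\star_{\!\!\! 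M} \,\dc\,\star_{\!\!\! M} \,\pc^{-1}$; by multiplicativity this collapses to the block-diagonal matrix identity $\diag\big(L(\gc)^{(i)}\big)_i=\diag(P_i)_i\,\matt(\dc)\,\diag(P_i^{-1})_i$, which is exactly the previous step written block by block, and injectivity of $\matt$ then returns the tensor identity — with $\pc$ and all $U^{(i)}$ nonsingular and all $N^{(i)}$ nilpotent, as required.

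I do not expect a genuine obstacle here; the only point needing care is bookkeeping of block sizes. The order of the core block $U^{(i)}$ is $\rk\big(L(\gc)^{(i)\,k}\big)$, which can vary with $i$, so a priori the frontal slices of $\dc$ carry different $2\times2$ partitions. This is harmless for the statement as written (each slice is still $\eta_1\times\eta_1$); if one insists on a common partition across slices — or must accommodate the degenerate cases where $L(\gc)^{(i)}$ is nonsingular or nilpotent — one pads each $N^{(i)}$ by a zero block of suitable size, which stays nilpotent and leaves $U^{(i)}$ nonsingular. No estimates or limiting arguments enter: the whole argument is ``push the matrix theorem through the isomorphism $\matt$.''
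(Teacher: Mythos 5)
Your proposal is correct and follows essentially the same route as the paper: apply the classical matrix core-nilpotent decomposition to each frontal slice in the transform domain and pull the block-diagonal factorization back through the multiplicative bijection $\matt$, with $\pc$ assembled from the slice-wise similarity matrices $P^{(i)}$. Your closing remark about the core-block sizes possibly varying with $i$ is a genuine point of care that the paper's proof passes over silently, but it does not change the argument.
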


\begin{proof}
By (\ref{mmm}), we have
\begin{eqnarray*}
  \gc=\matt^{-1}[\matt(\gc)]=\matt^{-1}\left[\diag\left(
                           (\widehat{\gc})^{(1)},\ldots,
                           (\widehat{\gc})^{(\eta_3)}
                         \right)\right].
\end{eqnarray*}
Suppose the core-nilpotent decomposition of $(\widehat{\gc})^{(i)}$ is
\begin{equation}\label{eew}
(\widehat{\gc})^{(i)}=P^{(i)}\left[
                               \begin{array}{cc}
                                 U^{(i)} & O \\
                                O  & N^{(i)} \\
                               \end{array}
                             \right](P^{(i)})^{-1}, \ \ i\in[\eta_3].
\end{equation}
Then,
\begin{equation*}
\aligned
\gc&=\matt^{-1}\left[\diag\left((\widehat{\gc})^{(1)},\ldots,(\widehat{\gc})^{(\eta_3)}\right)\right]\\
                       &=\matt^{-1}\left[\diag\left(
                            P^{(1)}\left[
                               \begin{array}{cc}
                                 U^{(1)} & O \\
                                O  & N^{(1)} \\
                               \end{array}
                             \right](P^{(1)})^{-1},\ldots,P^{(\eta_3)}\left[
                               \begin{array}{cc}
                                 U^{(\eta_3)} & O \\
                                O  & N^{(\eta_3)} \\
                               \end{array}
                             \right](P^{(\eta_3)})^{-1}\right)\right] \\
&=\matt^{-1}\left[\diag\left(P^{(1)},\ldots,P^{(\eta_3)}\right)\cdot
\diag\left(\left[
                               \begin{array}{cc}
                                 U^{(1)} & O \\
                                O  & N^{(1)} \\
                               \end{array}
                             \right],\ldots,\left[
                               \begin{array}{cc}
                                 U^{(\eta_3)} & O \\
                                O  & N^{(\eta_3)} \\
                               \end{array}
                             \right]\right)\cdot\diag\left(
                 (P^{(1)})^{-1},\ldots,(P^{(\eta_3)})^{-1}\right)\right]\\
&=\pc\,\star_{\!\!\! M} \, \matt^{-1}\left[\diag\left(\left[
                               \begin{array}{cc}
                                 U^{(1)} & O \\
                                O  & N^{(1)} \\
                               \end{array}
                             \right],\ldots,\left[
                               \begin{array}{cc}
                                 U^{(\eta_3)} & O \\
                                O  & N^{(\eta_3)} \\
                               \end{array}
                             \right]\right)\right]
  \,\star_{\!\!\! M} \, \pc^{-1},
  \endaligned
\end{equation*}
where $\pc$, $U^{(i)}$, $i\in[\eta_3]$ are nonsingular, and $N^{(i)}$, $i\in[\eta_3]$ are nilpotent.
\end{proof}

The following theorem is an equivalent statement for the GD inverse.
Here, one of the $\{1\}$-inverse of $G$ or $\gc$ will be denoted by $G^-$ or $\gc^-$.
\begin{theorem}\label{caa}
Let $\gc, \xc\in\mathbb{C}^{\eta_1\times \eta_1\times \eta_3}$ and $\ind(\gc)=k$.
Then, $\xc\in\gc\{GD\}$ if and only if
\begin{equation}\label{101122}
\xc=\pc\,\star_{\!\!\! M} \,
\matt^{-1}\left[\diag\left(
\left[
                               \begin{array}{cc}
                                 (U^{(1)})^{-1} & O \\
                                 O & (N^{(1)})^{-} \\
                               \end{array}
                             \right],\ldots,\left[
                               \begin{array}{cc}
                                 (U^{(\eta_3)})^{-1} & O \\
                                 O & (N^{(\eta_3)})^{-} \\
                               \end{array}
                             \right]
\right)\right]\,\star_{\!\!\! M} \, \pc^{-1},
\end{equation}
where   $(N^{(i)})^{-}\in N^{(i)}\{1\}$, $i\in[\eta_3]$.
\end{theorem}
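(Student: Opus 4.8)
The plan is to reduce the tensor statement, slice by slice, to the classical matrix characterization of the G-Drazin inverse and then reassemble via the $\matt^{-1}$ operator. First I would apply Lemma \ref{3} to write $\gc$ in its core-nilpotent form, so that $\matt(\gc)=\diag(P^{(1)}\bigl[\begin{smallmatrix}U^{(1)}&O\\O&N^{(1)}\end{smallmatrix}\bigr](P^{(1)})^{-1},\ldots)$. Because the M-product corresponds under $\matt$ to ordinary block-diagonal matrix multiplication (Definition with \eqref{zzz}) and because $\matt(\gc^{j})=\matt(\gc)^{j}$ for every power $j$, the three defining conditions (I)--(III) of Definition \ref{d07} hold for $\xc$ if and only if, for each $i\in[\eta_3]$, the $i$th frontal block $X^{(i)}:=\matt(\xc)^{(i)}$ satisfies the matrix G-Drazin equations $G X G=G$, $X G^{k+1}=G^{k}$, $G^{k+1}X=G^{k}$ with $G=(\widehat{\gc})^{(i)}$. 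Here I would note that $\ind((\widehat{\gc})^{(i)})\le k$ for all $i$, so using the common exponent $k$ (rather than the individual index) is harmless, since for a square matrix the conditions $XG^{\ell+1}=G^{\ell}$, $G^{\ell+1}X=G^{\ell}$ are equivalent for all $\ell\ge\ind(G)$.

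Next I would invoke the known matrix-level result (this is the classical description of the G-Drazin inverse via the core-nilpotent decomposition): for a matrix $G=P\bigl[\begin{smallmatrix}U&O\\O&N\end{smallmatrix}\bigr]P^{-1}$ with $U$ nonsingular and $N$ nilpotent of index $\le k$, one has $X\in G\{GD\}$ if and only if $X=P\bigl[\begin{smallmatrix}U^{-1}&O\\O&N^{-}\end{smallmatrix}\bigr]P^{-1}$ for some $N^{-}\in N\{1\}$. Applying this to each slice $(\widehat{\gc})^{(i)}$ gives $X^{(i)}=P^{(i)}\bigl[\begin{smallmatrix}(U^{(i)})^{-1}&O\\O&(N^{(i)})^{-}\end{smallmatrix}\bigr](P^{(i)})^{-1}$ with $(N^{(i)})^{-}\in N^{(i)}\{1\}$. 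Collecting these blocks into a block-diagonal matrix and factoring as in the proof of Lemma \ref{3}, $\diag(X^{(1)},\ldots,X^{(\eta_3)})=\diag(P^{(i)})\cdot\diag\bigl(\bigl[\begin{smallmatrix}(U^{(i)})^{-1}&O\\O&(N^{(i)})^{-}\end{smallmatrix}\bigr]\bigr)\cdot\diag((P^{(i)})^{-1})$, so applying $\matt^{-1}$ and recognizing $\pc=\matt^{-1}[\diag(P^{(i)})]$ and $\pc^{-1}=\matt^{-1}[\diag((P^{(i)})^{-1})]$ yields exactly \eqref{101122}.

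For the converse, I would start from the displayed formula \eqref{101122} for $\xc$, pass to $\matt(\xc)$, read off $X^{(i)}=P^{(i)}\bigl[\begin{smallmatrix}(U^{(i)})^{-1}&O\\O&(N^{(i)})^{-}\end{smallmatrix}\bigr](P^{(i)})^{-1}$, and verify the three matrix identities (I)--(III) blockwise by direct computation: on the $U$-block everything reduces to $U^{-1}$ being the ordinary inverse, and on the $N$-block one uses $N N^{-} N=N$ together with $N^{k}=O$ to kill the relevant terms in $X G^{k+1}$ and $G^{k+1}X$. Then translating back through $\matt^{-1}$ and the correspondence between block multiplication and $\,\star_{\!\!\! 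M} \,$ shows $\xc$ satisfies \eqref{mp012}, hence $\xc\in\gc\{GD\}$.

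The main obstacle is not any single hard estimate but the bookkeeping around the exponent: one must be careful that condition (II)/(III) in Definition \ref{d07} uses the global index $k=\ind(\gc)=\max_i\ind(L(\gc)^{(i)})$, whereas the slicewise matrix facts are naturally stated with each slice's own index. I would handle this by the elementary observation that $X G^{\ell+1}=G^{\ell}$ and $G^{\ell+1}X=G^{\ell}$ persist upward in $\ell$ once $\ell\ge\ind(G)$ (multiply by $G$), so replacing each slice index by the common $k$ costs nothing; and conversely, restricting the global identities to the $i$th block immediately gives the slice identities with exponent $k\ge\ind((\widehat{\gc})^{(i)})$, which is still enough to pin down $(N^{(i)})^{-}$ as a $\{1\}$-inverse of $N^{(i)}$.
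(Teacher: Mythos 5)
Your proposal is correct and follows essentially the same route as the paper: pass through $\matt$ to reduce to the frontal slices, apply the core-nilpotent decomposition of Lemma \ref{3}, characterize each slice blockwise, and reassemble with $\matt^{-1}$; the only difference is that you cite the classical matrix-level description of $G\{GD\}$ where the paper carries out the block computation explicitly (deriving $S^{(i)}=(U^{(i)})^{-1}$, $D^{(i)}\in N^{(i)}\{1\}$, $L^{(i)}=C^{(i)}=O$ from (I)--(III)). Your remark on reconciling the global index $k=\max_i\ind(L(\gc)^{(i)})$ with each slice's own index is a point the paper silently glosses over, and you handle it correctly.
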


\begin{proof}
$(\Rightarrow):$ By Lemma \ref{3}, one has
\begin{equation*}
  \gc=\pc\,\star_{\!\!\! M} \, \matt^{-1}\left[\diag\left(\left[
                               \begin{array}{cc}
                                 U^{(1)} & O \\
                                O  & N^{(1)} \\
                               \end{array}
                             \right],\ldots,\left[
                               \begin{array}{cc}
                                 U^{(\eta_3)} & O \\
                                 O & N^{(\eta_3)} \\
                               \end{array}
                             \right]
 \right)\right] \,\star_{\!\!\! M} \, \pc^{-1}.
\end{equation*}
In order to establish the formula $({\ref{101122}})$, consider
\begin{equation*}
  \xc=\matt^{-1}\left[\diag\left( (\widehat{\xc})^{(1)},\ldots,(\widehat{\xc})^{(\eta_3)}\right)\right]=
  \pc\,\star_{\!\!\! M} \, \matt^{-1}\left[\diag\left(\left[
                               \begin{array}{cc}
                               S^{(1)}& L^{(1)} \\
C^{(1)}& D^{(1)}
                               \end{array}
                             \right],\ldots,\left[
                               \begin{array}{cc}
                                S^{(\eta_3)}& L^{(\eta_3)} \\
C^{(\eta_3)}& D^{(\eta_3)}
                               \end{array}
                             \right]
\right) \right]  \,\star_{\!\!\! M} \, \pc^{-1}.
\end{equation*}
In this case,
$$
(\widehat{\xc})^{(i)}=P^{(i)}
\begin{bmatrix}
S^{(i)}& L^{(i)} \\
C^{(i)}& D^{(i)}
\end{bmatrix}(P^{(i)})^{-1},~i\in[\eta_3].$$
By $\textnormal{(I)}$ of Definition \ref{d07}, we obtain $\matt(\gc)\matt(\xc)\matt(\gc)=\matt(\gc)$, and further
\begin{align*}
  (\widehat{\gc})^{(i)}(\widehat{\xc})^{(i)}(\widehat{\gc})^{(i)}&=P^{(i)}
\begin{bmatrix}
U^{(i)}S^{(i)}U^{(i)}&U^{(i)}L^{(i)}N^{(i)}\\
N^{(i)}C^{(i)}U^{(i)}&N^{(i)}D^{(i)}N^{(i)}
\end{bmatrix}(P^{(i)})^{-1}\\
&=P^{(i)}
\begin{bmatrix}
U^{(i)}&O\\
O&N^{(i)}
\end{bmatrix}(P^{(i)})^{-1}=(\widehat{\gc})^{(i)},~i\in[\eta_3].
\end{align*}
Then it is concluded that $U^{(i)}L^{(i)}N^{(i)}=O$, $N^{(i)}C^{(i)}U^{(i)}=O$, $S^{(i)}=(U^{(i)})^{-1}$ and $D^{(i)}=(N^{(i)})^{-}\in N^{(i)}\{1\}$.\\ By $\textnormal{(II)}$ of Definition \ref{d07}, we obtain $\matt(\xc)\matt(\gc^{k+1})=\matt(\gc^{k})$, then
$$(\widehat{\xc})^{(i)}(\widehat{\gc^{k+1}})^{(i)}=P^{(i)}
\begin{bmatrix}
S^{(i)}(U^{k+1})^{(i)}&O\\
C^{(i)}(U^{k+1})^{(i)}&O
\end{bmatrix}(P^{(i)})^{-1}=P^{(i)}
\begin{bmatrix}
 (U^{k})^{(i)}&O\\
O&O
\end{bmatrix}(P^{(i)})^{-1}=(\widehat{\gc^k})^{(i)},~i\in[\eta_3].
$$
In this case, $C^{(i)}=O$. By $\textnormal{(III)}$ of Definition \ref{d07}, we obtain $\matt(\gc^{k+1})\matt(\xc)=\matt(\gc^k)$, then
$$(\widehat{\gc^{k+1}})^{(i)}(\widehat{\xc})^{(i)}=P^{(i)}
\begin{bmatrix}
 (U^{k+1})^{(i)}S^{(i)}&(U^{k+1})^{(i)}L^{(i)}\\
O&O
\end{bmatrix}(P^{(i)})^{-1}=P^{(i)}
\begin{bmatrix}
 (U^{k})^{(i)}&O\\
O&O
\end{bmatrix}(P^{(i)})^{-1}=(\widehat{\gc^k})^{(i)},~i\in[\eta_3],$$
which implies $L^{(i)}=0$. Hence,
\begin{equation*}
  (\widehat{\xc})^{(i)}=P^{(i)}\left[
                               \begin{array}{cc}
                                 (U^{-1})^{(i)} & O \\
                                O  & (N^{(i)})^{-} \\
                               \end{array}
                             \right](P^{(i)})^{-1},
\end{equation*}
and then
\begin{equation*}
\xc=\pc\,\star_{\!\!\! M} \,
\matt^{-1}\left[\diag\left(
\left[
                               \begin{array}{cc}
                                 (U^{(1)})^{-1} & O \\
                                 O & (N^{(1)})^{-} \\
                               \end{array}
                             \right],\ldots,\left[
                               \begin{array}{cc}
                                 (U^{(\eta_3)})^{-1} & O \\
                                 O & (N^{(\eta_3)})^{-} \\
                               \end{array}
                             \right]
\right)\right]\,\star_{\!\!\! M} \, \pc^{-1}.
\end{equation*}
$(\Leftarrow):$ Let
\begin{equation*}
  \gc=\matt^{-1}\left[\diag\left( (\widehat{\gc})^{(1)},\ldots,(\widehat{\gc})^{(\eta_3)})\right)\right]=
  \pc\,\star_{\!\!\! M} \, \matt^{-1}\left[\diag\left(\left[
                               \begin{array}{cc}
                                 U^{(1)} & O \\
                                O  & N^{(1)} \\
                               \end{array}
                             \right],\ldots,\left[
                               \begin{array}{cc}
                                 U^{(\eta_3)} & O \\
                                 O & N^{(\eta_3)} \\
                               \end{array}
                             \right]
\right) \right]  \,\star_{\!\!\! M} \, \pc^{-1}.
\end{equation*}
Since \begin{equation*}\label{1012}
\xc=\pc\,\star_{\!\!\! M} \,
\matt^{-1}\left[\diag\left(
\left[
                               \begin{array}{cc}
                                 (U^{(1)})^{-1} & O \\
                                 O & (N^{(1)})^{-} \\
                               \end{array}
                             \right],\ldots,\left[
                               \begin{array}{cc}
                                 (U^{(\eta_3)})^{-1} & O \\
                                 O & (N^{(\eta_3)})^{-} \\
                               \end{array}
                             \right]
\right)\right]\,\star_{\!\!\! M} \, \pc^{-1},
\end{equation*}
it is easy to check
\begin{equation*}\label{mp012}
\textnormal{(I)} \ \gc\,\star_{\!\!\! M} \, \xc\,\star_{\!\!\! M} \, \gc = \gc, \quad \textnormal{(II)} \ \xc\,\star_{\!\!\! M} \, \gc^{k+1} = \gc^k, \quad \textnormal{(III)} \ \gc^{k+1}\,\star_{\!\!\! M} \, \xc= \gc^k.
\end{equation*}
Hence, $\xc\in\gc\{GD\}$.
\end{proof}

Algorithm \ref{Alg1MT} for computing the GD inverse is constructed based on Theorem \ref{caa}.

\begin{algorithm}[H]\label{Alg1MT}
\caption{Computing the GD inverse under the M-product}	
\KwIn{$\gc\in\mathbb{C}^{\eta_1\times \eta_1\times \eta_3}$ and $M \in\mathbb{C}^{\eta_3\times \eta_3}$}
\KwOut {$\xc=\gc^{GD}$}
\begin{enumerate}\label{al}
\item Compute $\widehat{\gc}=\gc\times_{\!\!3}M$
\item $k=\ind(\widehat{\gc})$
\item $\textbf{For}$ $i=1:\eta_3$ \textbf{do}
\item ~~~~~~$(\widehat{\pc})^{(i)}=\lbrack X\left|Y\right.\rbrack$ (X is a  base of $R(((\widehat{\gc})^{(i)})^{k})$,~Y is a  base of $N(((\widehat{\gc})^{(i)})^{k})$
\textbf{)}
\item ~~~~~~$(\widehat{\tc})^{(i)}=((\widehat{\mathcal {P}})^{(i)})^{-1}(\widehat{\gc})^{(i)}(\widehat{\mathcal {P}})^{(i)}$
\item ~~~~~~$U=(\widehat{\tc})^{(i)}(1:k;1:k)$, $N=(\widehat{\tc})^{(i)}(k+1:\eta_3;k+1:\eta_3)$
\item ~~~~~~$(\widehat{\dc})^{(i)}=\mathrm{diag}(U^{-1},N^{(1)})$
\item ~~~~~~$(\widehat{\gc^{GD}})^{(i)}=(\widehat{\mathcal {P}})^{(i)}(\widehat{\mathcal {D}})^{(i)}((\widehat{\mathcal {P}})^{(i)})^{-1}$
 \item ~~~~~~$\zc^{(i)}=(\widehat{\gc^{GD}})^{(i)}$
\item \textbf{End for}
\item Calculate $\xc=\zc\times_3M^{-1}$

\item \textbf{Return} $\gc^{GD}=\xc$.
\end{enumerate}
\end{algorithm}

\begin{example}
Let $\gc\in\mathbb{C}^{3\times3\times3}$ and $M\in\mathbb{C}^{3\times3}$  with entries
$$(\mathcal A)^{(1)}=
\begin{bmatrix}
1&2&5\\
3&3&5\\
1&2&4
\end{bmatrix},~
(\mathcal A)^{(2)}=
\begin{bmatrix}
1&2&3\\
1&2&2\\
1&2&3
\end{bmatrix},~
(\mathcal A)^{(3)}=
\begin{bmatrix}
-1&0&1\\
0&0&1\\
-1&0&1
\end{bmatrix},~
M=
\begin{bmatrix}
1&0&-1\\
0&1&0\\
0&1&-1
\end{bmatrix}.
$$
The  index of $\gc$ is $k=2$, since $\ind((\widehat{\gc})^{(1)})=\ind((\widehat{\gc})^{(2)})=\ind((\widehat{\gc})^{(3)})=2$.
By Algorithm \ref{al}, compute $\xc=\gc^{GD}$, which gives
$$
(\mathcal X)^{(1)}=
\begin{bmatrix}
-14.75&-14.5&37.25\\
12.25&12&-30.25\\
1.25&1.5&-3.75
\end{bmatrix},~
(\mathcal X)^{(2)}=
\begin{bmatrix}
-0.5&-1&1.5\\
1&2&-2.5\\
-0.5&-1&1.5
\end{bmatrix},~
(\mathcal X)^{(3)}=
\begin{bmatrix}
-0.75&-0.5&1.25\\
1.25&1&-2.25\\
-0.75&-0.5&1.25
\end{bmatrix}.
$$
\end{example}

Theorem \ref{ty2} gives another equivalent condition for the tensor GD inverse.

\begin{theorem}\label{ty2}
Consider $\gc, \xc\in\mathbb{C}^{\eta_1\times \eta_1\times \eta_3}$ such that $\ind(\gc)=k$. Then, $\xc\in\gc\{GD\}$ if and only if
\begin{equation}\label{cca}
\textnormal{(I)} \ \gc\,\star_{\!\!\! M} \, \xc\,\star_{\!\!\! M} \, \gc = \gc, \quad \textnormal{(II)} \ \gc^{k}\,\star_{\!\!\! M} \, \xc =\xc\,\star_{\!\!\! M} \, \gc^k.
\end{equation}
\end{theorem}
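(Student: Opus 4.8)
The plan is to prove both implications directly inside the associative algebra $(\mathbb{C}^{\eta_1\times\eta_1\times\eta_3},\ \,\star_{\!\!\! M} \,)$, using only associativity of the M-product and the identity tensor $\ic$; no decomposition is needed. Throughout I take $k=\ind(\gc)\ge 1$, the case $k=0$ (that is, $\gc$ invertible) being immediate since then condition (I) forces $\xc=\gc^{-1}$, condition (II) is then vacuous, and indeed $\gc\{GD\}=\{\gc^{-1}\}$.

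For the forward direction, assume $\xc\in\gc\{GD\}$. Condition (I) of \eqref{cca} is identical to (I) of Definition \ref{d07}, so only the commutation relation in (II) of \eqref{cca} has to be checked. Substituting (II) of Definition \ref{d07} in the form $\gc^{k}=\xc\,\star_{\!\!\! M} \,\gc^{k+1}$ and then invoking (III),
\[
\gc^{k}\,\star_{\!\!\! M} \,\xc=\bigl(\xc\,\star_{\!\!\! M} \,\gc^{k+1}\bigr)\,\star_{\!\!\! M} \,\xc=\xc\,\star_{\!\!\! M} \,\bigl(\gc^{k+1}\,\star_{\!\!\! M} \,\xc\bigr)=\xc\,\star_{\!\!\! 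M} \,\gc^{k},
\]
the middle equality being associativity; hence \eqref{cca} holds.

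For the converse, assume (I) and (II) of \eqref{cca}. Multiplying (I) on the right by $\gc^{k-1}$ gives $\gc\,\star_{\!\!\! M} \,\xc\,\star_{\!\!\! M} \,\gc^{k}=\gc^{k}$. Commuting $\gc^{k}$ past $\xc$ by (II) and regrouping,
\[
\gc^{k+1}\,\star_{\!\!\! M} \,\xc=\gc\,\star_{\!\!\! M} \,\bigl(\gc^{k}\,\star_{\!\!\! M} \,\xc\bigr)=\gc\,\star_{\!\!\! M} \,\bigl(\xc\,\star_{\!\!\! M} \,\gc^{k}\bigr)=\bigl(\gc\,\star_{\!\!\! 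M} \,\xc\,\star_{\!\!\! M} \,\gc\bigr)\,\star_{\!\!\! M} \,\gc^{k-1}=\gc^{k},
\]
and symmetrically $\xc\,\star_{\!\!\! M} \,\gc^{k+1}=\bigl(\xc\,\star_{\!\!\! M} \,\gc^{k}\bigr)\,\star_{\!\!\! M} \,\gc=\bigl(\gc^{k}\,\star_{\!\!\! M} \,\xc\bigr)\,\star_{\!\!\! M} \,\gc=\gc^{k-1}\,\star_{\!\!\! M} \,\bigl(\gc\,\star_{\!\!\! M} \,\xc\,\star_{\!\!\! M} \,\gc\bigr)=\gc^{k}$. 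Together with (I), these are exactly the three defining identities of $\gc^{GD}$ in Definition \ref{d07}, so $\xc\in\gc\{GD\}$.

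A second, more pedestrian route --- useful if one prefers to stay parallel to the proof of Theorem \ref{caa} --- is to pass to the frontal slices via $\matt(\cdot)$, where $\,\star_{\!\!\! M} \,$ becomes block-diagonal matrix multiplication and $\gc^{k}$ becomes $(\widehat{\gc})^{(i)k}$; the statement then reduces to the matrix fact that $X\in G\{GD\}$ iff $GXG=G$ and $G^{k}X=XG^{k}$, which is verified on each slice from the core-nilpotent form $(\widehat{\gc})^{(i)}=P^{(i)}\,\diag\!\bigl(U^{(i)},N^{(i)}\bigr)(P^{(i)})^{-1}$ of Lemma \ref{3}, the block shape of $(\widehat{\xc})^{(i)}$ furnished by Theorem \ref{caa}, and the identity $N^{(i)k}=O$. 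The only point requiring attention in either route is that Definition \ref{d07} uses the \emph{global} index $k=\max_{i}\ind\bigl((\widehat{\gc})^{(i)}\bigr)$, so on a slice whose own index is strictly smaller one still has $N^{(i)k}=O$ and the exponent-$k$ manipulations remain valid; beyond this bookkeeping there is no genuine obstacle.
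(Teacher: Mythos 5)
Your proof is correct, and your primary argument is genuinely different from the one in the paper. The paper proves Theorem \ref{ty2} by routing everything through Theorem \ref{caa}: it passes to the frontal slices via $\matt(\cdot)$, writes $(\widehat{\xc})^{(i)}$ in the block form $P^{(i)}\left[\begin{smallmatrix} S^{(i)} & L^{(i)} \\ C^{(i)} & D^{(i)} \end{smallmatrix}\right](P^{(i)})^{-1}$ relative to the core-nilpotent decomposition of Lemma \ref{3}, and extracts $L^{(i)}=C^{(i)}=O$, $S^{(i)}=(U^{(i)})^{-1}$, $D^{(i)}\in N^{(i)}\{1\}$ from the two equations --- essentially your ``second, more pedestrian route.'' Your main argument instead works entirely inside the associative algebra: the forward direction is the one-line computation $\gc^{k}\,\star_{\!\!\! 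M} \,\xc=(\xc\,\star_{\!\!\! M} \,\gc^{k+1})\,\star_{\!\!\! M} \,\xc=\xc\,\star_{\!\!\! M} \,(\gc^{k+1}\,\star_{\!\!\! M} \,\xc)=\xc\,\star_{\!\!\! M} \,\gc^{k}$, and the converse recovers (II) and (III) of Definition \ref{d07} by commuting $\gc^{k}$ past $\xc$ and absorbing one factor of $\gc$ into $\gc\,\star_{\!\!\! M} \,\xc\,\star_{\!\!\! M} \,\gc=\gc$. Both directions check out, and together with (I) they reproduce exactly the three defining identities, so the logic is complete. What your approach buys is brevity and generality: it uses only associativity and the equational definition, so it would hold verbatim in any associative ring and does not depend on the existence or uniqueness of the core-nilpotent decomposition, nor on the per-slice index bookkeeping. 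What the paper's approach buys is uniformity with the rest of Section 3 (the same block-partition template is reused in Theorems \ref{caa} and \ref{ass}) and the explicit canonical form \eqref{101122} as a by-product. Your separate treatment of $k=0$ (and the observation that the exponent $k-1$ in the converse requires $k\ge1$) is a point of care the paper itself does not address.
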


\begin{proof}
By Theorem ${\ref{caa}}$, it is enough to check
\begin{eqnarray*}\label{10122}
\xc
=\pc\,\star_{\!\!\! M} \,
\matt^{-1}\left[\diag\left(
\left[
                               \begin{array}{cc}
                                 (U^{(1)})^{-1} & O \\
                                 O & (N^{(1)})^{-} \\
                               \end{array}
                             \right],\ldots,\left[
                               \begin{array}{cc}
                                 (U^{(\eta_3)})^{-1} & O \\
                                 O & (N^{(\eta_3)})^{-} \\
                               \end{array}
                             \right]
\right)\right]\,\star_{\!\!\! M} \, \pc^{-1}
\end{eqnarray*}
is equivalent to
\begin{equation*}\label{cca}
\textnormal{(I)} \ \gc\,\star_{\!\!\! M} \, \xc\,\star_{\!\!\! M} \, \gc = \gc, \quad \textnormal{(II)} \ \gc^{k}\,\star_{\!\!\! M} \, \xc =\xc\,\star_{\!\!\! M} \, \gc^k.
\end{equation*}
Suppose
\begin{equation*}
  \gc=\matt^{-1}\left[\diag\left((\widehat{\gc})^{(1)},\ldots,(\widehat{\gc})^{(\eta_3)}\right)\right]=
  \pc\,\star_{\!\!\! M} \, \matt^{-1}\left[\diag\left(\left[
                               \begin{array}{cc}
                                 U^{(1)} & O \\
                                O  & N^{(1)} \\
                               \end{array}
                             \right],\ldots,\left[
                               \begin{array}{cc}
                                 U^{(\eta_3)} & O \\
                                 O & N^{(\eta_3)} \\
                               \end{array}
                             \right]
\right) \right]  \,\star_{\!\!\! M} \, \pc^{-1},
\end{equation*}
it is simple to verify $\gc\,\star_{\!\!\! M} \, \xc\,\star_{\!\!\! M} \, \gc = \gc$ and $\gc^{k}\,\star_{\!\!\! M} \, \xc=\xc\,\star_{\!\!\! M} \, \gc^{k}$.

Now, we will prove the reverse part. Suppose
\begin{equation*}
  \xc=\matt^{-1}\left[\diag\left( (\widehat{\xc})^{(1)},\ldots,(\widehat{\xc})^{(\eta_3)}\right)\right]=
  \pc\,\star_{\!\!\! M} \, \matt^{-1}\left[\diag\left(\left[
                               \begin{array}{cc}
                               S^{(1)}& L^{(1)} \\
C^{(1)}& D^{(1)}
                               \end{array}
                             \right],\cdots,\left[
                               \begin{array}{cc}
                                S^{(\eta_3)}& L^{(\eta_3)} \\
C^{(\eta_3)}& D^{(\eta_3)}
                               \end{array}
                             \right]
 \right) \right] \,\star_{\!\!\! M} \, \pc^{-1}.
\end{equation*}
Consider the partition
$$(\widehat{\xc})^{(i)}=P^{(i)}
\begin{bmatrix}
S^{(i)}& L^{(i)} \\
C^{(i)}& D^{(i)}
\end{bmatrix}(P^{(i)})^{-1},~i\in[\eta_3].$$
By $\gc^{k}\,\star_{\!\!\! M} \, \xc =\xc\,\star_{\!\!\! M} \, \gc^k$, we have $\matt(\gc^k)\matt(\xc)=\matt(\xc)\matt(\gc^k)$, which implies
\begin{align*}
  (\widehat{\gc^k})^{(i)}(\widehat{\xc})^{(i)}&=P^{(i)}
\begin{bmatrix}
(U^{k})^{(i)}S^{(i)}&(U^{k})^{(i)}L^{(i)}\\
O&O
\end{bmatrix}(P^{(i)})^{-1}\\
&=P^{(i)}
\begin{bmatrix}
S^{(i)}(U^{k})^{(i)}&O\\
C^{(i)}(U^{k})^{(i)}&O
\end{bmatrix}(P^{(i)})^{-1}=(\widehat{\xc})^{(i)}(\widehat{\gc^{k}})^{(i)}.
\end{align*}
Hence, we obtain $L^{(i)}=O$ and $C^{(i)}=O$, $i\in[\eta_3]$.
On the other hand, with $\gc\,\star_{\!\!\! M} \, \xc\,\star_{\!\!\! M} \, \gc=\gc$, we get $\matt(\gc)\matt(\xc)\matt(\gc)=\matt(\gc)$. Thus,
\begin{align*}
(\widehat{\gc})^{(i)}(\widehat{\xc})^{(i)}(\widehat{\gc})^{(i)}
&=P^{(i)}
\begin{bmatrix}
 U^{(i)}S^{(i)}U^{(i)}  &  O\\
O                    &  N^{(i)}D^{(i)}N^{(i)}
\end{bmatrix}(P^{(i)})^{-1}\\
&=P^{(i)}
\begin{bmatrix}
 U^{(i)}  &  0\\
0                  &   N^{(i)}
\end{bmatrix}(P^{(i)})^{-1}=(\widehat{\gc})^{(i)},~i\in[\eta_3].
\end{align*}
So, we have $S^{(i)}=(U^{(i)})^{-1}$ and $D^{(i)}=(N^{(i)})^{-}\in N^{(i)}\{1\}$.
Therefore, $\xc\in\gc\{GD\}$.
\end{proof}

Definition \ref{DefDIMT} introduces the tensor Drazin inverse under the M-product.

\begin{definition}\label{DefDIMT}
Let $\gc, \xc\in\mathbb{C}^{\eta_1\times \eta_1\times \eta_3}$ is of index $\ind(\gc)=k$.
In this case, $\xc$ denoted  by $\gc^D$, is called the Drazin inverse of $\gc$ if
\begin{equation*}
  \textnormal{(I)} \ \xc\,\star_{\!\!\! M} \, \gc^{k+1}=\gc^{k}, \ \ \textnormal{(II)} \ \xc\,\star_{\!\!\! M} \, \gc\,\star_{\!\!\! M} \, \xc=\xc, \ \ \textnormal{(III)} \ \xc\,\star_{\!\!\! M} \, \gc=\gc\,\star_{\!\!\! M} \, \xc.
\end{equation*}
\end{definition}

\begin{lemma}\label{4}
Let $\gc, \xc\in\mathbb{C}^{\eta_1\times \eta_1\times \eta_3}$ and $\ind(\gc)=k$.
Then,
\begin{equation*}
  \gc^D=\pc\,\star_{\!\!\! M} \, \matt^{-1}\left[\diag\left(\left[
                               \begin{array}{cc}
                                 (U^{(1)})^{-1} & O \\
                                O  &O \\
                               \end{array}
                             \right],\cdots,\left[
                               \begin{array}{cc}
                                 (U^{(\eta_3)})^{-1}& O \\
                                 O & O \\
                               \end{array}
                             \right]
\right) \right]  \,\star_{\!\!\! M} \, \pc^{-1}.
\end{equation*}
\end{lemma}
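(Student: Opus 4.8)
The plan is to transport the classical matrix formula for the Drazin inverse along the multiplicative bijection $\matt(\cdot)$, exactly in the spirit of the proof of Theorem~\ref{caa}. Recall from \eqref{zzz} that $\matt(\mathcal A\,\star_{\!\!\! M} \,\mathcal B)=\matt(\mathcal A)\matt(\mathcal B)$, that $\matt$ is invertible, and that $\matt(\ic)$ is the identity; consequently conditions (I)--(III) of Definition~\ref{DefDIMT} for a pair $(\gc,\xc)$ are equivalent to the corresponding \emph{matrix} identities holding for every frontal slice, i.e.\ for each $i\in[\eta_3]$,
\[
\begin{aligned}
&(\widehat{\xc})^{(i)}\bigl((\widehat{\gc})^{(i)}\bigr)^{k+1}=\bigl((\widehat{\gc})^{(i)}\bigr)^{k},\qquad
(\widehat{\xc})^{(i)}(\widehat{\gc})^{(i)}(\widehat{\xc})^{(i)}=(\widehat{\xc})^{(i)},\\
&(\widehat{\xc})^{(i)}(\widehat{\gc})^{(i)}=(\widehat{\gc})^{(i)}(\widehat{\xc})^{(i)},
\end{aligned}
\]
where I use that $\matt(\gc^{m})=\matt(\gc)^{m}$ has $i$-th diagonal block $\bigl((\widehat{\gc})^{(i)}\bigr)^{m}$.

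First I would apply Lemma~\ref{3} to obtain, for each $i$, the core-nilpotent splitting $(\widehat{\gc})^{(i)}=P^{(i)}\begin{bmatrix}U^{(i)}&O\\O&N^{(i)}\end{bmatrix}(P^{(i)})^{-1}$ with $P^{(i)},U^{(i)}$ nonsingular and $N^{(i)}$ nilpotent, and crucially with $\bigl(N^{(i)}\bigr)^{k}=O$ because $k=\ind(\gc)=\max_{i}\ind\bigl((\widehat{\gc})^{(i)}\bigr)\ge\ind\bigl((\widehat{\gc})^{(i)}\bigr)$. Taking $\xc$ to be the tensor displayed on the right-hand side of the lemma, its slices are $(\widehat{\xc})^{(i)}=P^{(i)}\begin{bmatrix}(U^{(i)})^{-1}&O\\O&O\end{bmatrix}(P^{(i)})^{-1}$, and since $\bigl((\widehat{\gc})^{(i)}\bigr)^{m}=P^{(i)}\begin{bmatrix}(U^{(i)})^{m}&O\\O&(N^{(i)})^{m}\end{bmatrix}(P^{(i)})^{-1}$, each of the three matrix identities above reduces to a one-line block computation using $(N^{(i)})^{k}=O$ and the invertibility of $U^{(i)}$. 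Hence $\xc$ satisfies Definition~\ref{DefDIMT}, and by the uniqueness of the Drazin inverse, $\xc=\gc^{D}$.

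Finally I would reassemble the slicewise identity $(\widehat{\gc^{D}})^{(i)}=P^{(i)}\begin{bmatrix}(U^{(i)})^{-1}&O\\O&O\end{bmatrix}(P^{(i)})^{-1}$ into tensor form: writing $\pc=\matt^{-1}\bigl(\diag(P^{(1)},\ldots,P^{(\eta_3)})\bigr)$ and pulling the block-diagonal factors back through $\matt^{-1}$ via the homomorphism property yields precisely the asserted expression for $\gc^{D}$. The only genuine subtlety — everything else being the standard matrix argument transported along $\matt$ — is the index bookkeeping: one must use that the \emph{global} index $k$ dominates each local index $\ind\bigl((\widehat{\gc})^{(i)}\bigr)$, so that the nilpotent blocks are annihilated by the power $k$ appearing in condition (I).
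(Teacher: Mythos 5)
Your proposal is correct and follows the same route as the paper: invoke the core-nilpotent decomposition of Lemma~\ref{3} and read off the Drazin inverse slicewise through $\matt(\cdot)$. The paper compresses the verification into the single phrase ``based on the Drazin property,'' whereas you actually check conditions (I)--(III) of Definition~\ref{DefDIMT} on each frontal slice and note the index bookkeeping $(N^{(i)})^{k}=O$; this is the same argument, just written out in full.
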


\begin{proof}
By Lemma \ref{3}, we have
\begin{equation*}
  \gc=\pc\,\star_{\!\!\! M} \, \matt^{-1}\left[\diag\left(\left[
                               \begin{array}{cc}
                                 U^{(1)} & O \\
                                O  & N^{(1)} \\
                               \end{array}
                             \right],\ldots,\left[
                               \begin{array}{cc}
                                 U^{(\eta_3)} & O \\
                                 O & N^{(\eta_3)} \\
                               \end{array}
                             \right]
                           \right)  \right]
  \,\star_{\!\!\! M} \, \pc^{-1},
\end{equation*}
Based on the  Drazin property, we get
\begin{equation*}
  \gc^D=\pc\,\star_{\!\!\! M} \, \matt^{-1}\left[\diag\left(\left[
                               \begin{array}{cc}
                                 (U^{(1)})^{-1} & O \\
                                O  &O \\
                               \end{array}
                             \right],\ldots,\left[
                               \begin{array}{cc}
                                 (U^{(\eta_3)})^{-1}& O \\
                                 O & O \\
                               \end{array}
                             \right]
 \right) \right] \,\star_{\!\!\! M} \, \pc^{-1}.
\end{equation*}
\end{proof}

\begin{theorem}\label{ass}
Let $\gc, \xc\in\mathbb{C}^{\eta_1\times \eta_1\times \eta_3}$ and $\ind(\gc)=k$. Then, $\xc\in\gc\{GD\}$ if and only if
\begin{equation}\label{cca2}
\textnormal{(I)} \ \gc\,\star_{\!\!\! M} \, \xc\,\star_{\!\!\! M} \, \gc = \gc, \quad \textnormal{(II)} \ \gc^D\,\star_{\!\!\! M} \, \gc\,\star_{\!\!\! M} \, \xc =\xc\,\star_{\!\!\! M} \, \gc^D\,\star_{\!\!\! M} \, \gc.
\end{equation}
\end{theorem}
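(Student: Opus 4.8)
The plan is to reduce everything to the frontal-slice picture via $\matt(\cdot)$, exactly as in the proofs of Theorem~\ref{caa} and Theorem~\ref{ty2}, and then recognize condition (II) of \eqref{cca2} as a reformulation of the condition $\gc^k\,\star_{\!\!\! M} \,\xc=\xc\,\star_{\!\!\! M} \,\gc^k$ appearing in Theorem~\ref{ty2}. Concretely, I would first invoke Lemma~\ref{3} to write $\gc$ in core-nilpotent form with respect to a nonsingular $\pc$, so that on the $i$th slice $(\widehat{\gc})^{(i)}=P^{(i)}\diag(U^{(i)},N^{(i)})(P^{(i)})^{-1}$, and I would use Lemma~\ref{4} to record that $(\widehat{\gc^D})^{(i)}=P^{(i)}\diag((U^{(i)})^{-1},O)(P^{(i)})^{-1}$. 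A short computation then shows that $(\widehat{\gc^D})^{(i)}(\widehat{\gc})^{(i)}=P^{(i)}\diag(I,O)(P^{(i)})^{-1}$ is the spectral idempotent of $(\widehat{\gc})^{(i)}$ onto the ``core'' part along the ``nilpotent'' part; call its tensor version $\pc_{\gc}:=\gc^D\,\star_{\!\!\! M} \,\gc$. Note $\pc_\gc$ commutes with $\gc^k$, and $\gc^k\,\star_{\!\!\! M} \,\pc_\gc=\gc^k=\pc_\gc\,\star_{\!\!\! M} \,\gc^k$ since $\gc^k$ already lives in the core block (as $\ind(\gc)=k$ kills every $N^{(i)}$).

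Next I would prove the equivalence of the two ``commutation-type'' conditions, i.e. that for $\xc$ satisfying (I), the equation $\gc^D\,\star_{\!\!\! M} \,\gc\,\star_{\!\!\! M} \,\xc=\xc\,\star_{\!\!\! M} \,\gc^D\,\star_{\!\!\! M} \,\gc$ from \eqref{cca2} is equivalent to $\gc^k\,\star_{\!\!\! M} \,\xc=\xc\,\star_{\!\!\! M} \,\gc^k$ from \eqref{cca}, after which the claim follows immediately from Theorem~\ref{ty2}. Working slice-wise, write $(\widehat{\xc})^{(i)}=P^{(i)}\bigl[\begin{smallmatrix}S^{(i)}&L^{(i)}\\ C^{(i)}&D^{(i)}\end{smallmatrix}\bigr](P^{(i)})^{-1}$. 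Then $(\widehat{\gc^D})^{(i)}(\widehat{\gc})^{(i)}(\widehat{\xc})^{(i)}=P^{(i)}\bigl[\begin{smallmatrix}S^{(i)}&L^{(i)}\\ O&O\end{smallmatrix}\bigr](P^{(i)})^{-1}$ and $(\widehat{\xc})^{(i)}(\widehat{\gc^D})^{(i)}(\widehat{\gc})^{(i)}=P^{(i)}\bigl[\begin{smallmatrix}S^{(i)}&O\\ C^{(i)}&O\end{smallmatrix}\bigr](P^{(i)})^{-1}$, so condition (II) of \eqref{cca2} is equivalent to $L^{(i)}=O$ and $C^{(i)}=O$ for all $i$. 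On the other hand, since $(\widehat{\gc^k})^{(i)}=P^{(i)}\diag((U^{(i)})^k,O)(P^{(i)})^{-1}$ with $(U^{(i)})^k$ invertible, the computation already carried out in the proof of Theorem~\ref{ty2} shows $\gc^k\,\star_{\!\!\! M} \,\xc=\xc\,\star_{\!\!\! M} \,\gc^k$ is likewise equivalent to $L^{(i)}=O$ and $C^{(i)}=O$ for all $i$. Hence the two conditions coincide under hypothesis (I) (indeed they coincide even without it), and Theorem~\ref{ty2} finishes the argument.

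For the converse direction I would simply note that if $\xc\in\gc\{GD\}$ then by Theorem~\ref{ty2} it satisfies (I) and $\gc^k\,\star_{\!\!\! M} \,\xc=\xc\,\star_{\!\!\! M} \,\gc^k$, and then multiplying this last identity on the left (or right) and using the just-established slice-wise equivalence (or, more directly, the fact that $\gc^D\,\star_{\!\!\! M} \,\gc$ is a polynomial-free rational function of $\gc$ that commutes with anything commuting with $\gc^k$) yields \eqref{cca2}(II); alternatively one plugs the explicit form \eqref{101122} of $\xc$ into both sides. The only genuinely delicate point is bookkeeping: one must make sure the block partition used for $\widehat{\xc}$ is \emph{the same} nonsingular $\pc$ and the same split of the index set into core and nilpotent parts as in the decomposition of $\gc$ — this is legitimate because $P^{(i)}$ is invertible, so every $(\widehat{\xc})^{(i)}$ can be written in that basis — and that $\ind(\gc)=k$ really does force $(N^{(i)})^k=O$ so that $\gc^k$ has no nilpotent component. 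Beyond that the proof is a routine $2\times 2$ block matrix calculation slice by slice, entirely parallel to Theorems~\ref{caa} and \ref{ty2}; I do not expect any real obstacle.
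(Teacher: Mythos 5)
Your proposal is correct and takes essentially the same route as the paper: the same slice-wise core--nilpotent reduction via Lemmas \ref{3} and \ref{4}, the same $2\times 2$ block computation showing that $\gc^D\,\star_{\!\!\! M} \,\gc\,\star_{\!\!\! M} \,\xc=\xc\,\star_{\!\!\! M} \,\gc^D\,\star_{\!\!\! M} \,\gc$ forces $L^{(i)}=C^{(i)}=O$, and then condition (I) to pin down $S^{(i)}=(U^{(i)})^{-1}$ and $D^{(i)}\in N^{(i)}\{1\}$ (the paper concludes via Theorem \ref{caa} rather than routing through Theorem \ref{ty2}, but the underlying argument is identical). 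The only inaccuracy is your parenthetical claim that the two commutation conditions coincide even without (I): $\gc^k\,\star_{\!\!\! M} \,\xc=\xc\,\star_{\!\!\! M} \,\gc^k$ additionally forces $(U^{(i)})^kS^{(i)}=S^{(i)}(U^{(i)})^k$, so it is strictly stronger in general, but this is harmless here since (I) already gives $S^{(i)}=(U^{(i)})^{-1}$.
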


\begin{proof}
Following results of Theorem \ref{caa}, it is enough to verify
\begin{equation*}\label{10122}
\xc=\pc\,\star_{\!\!\! M} \,
\matt^{-1}\left[\diag\left(
\left[
                               \begin{array}{cc}
                                 (U^{(1)})^{-1} & O \\
                                 O & (N^{(1)})^{-} \\
                               \end{array}
                             \right],\ldots,\left[
                               \begin{array}{cc}
                                 (U^{(\eta_3)})^{-1} & O \\
                                 O & (N^{(\eta_3)})^{-} \\
                               \end{array}
                             \right]
\right)\right]\,\star_{\!\!\! M} \, \pc^{-1}
\end{equation*}
is equivalent to
\begin{equation*}\label{fff}
\textnormal{(I)} \ \gc\,\star_{\!\!\! M} \, \xc\,\star_{\!\!\! M} \, \gc = \gc, \quad \textnormal{(II)} \ \gc^D\,\star_{\!\!\! M} \, \gc\,\star_{\!\!\! M} \, \xc =\xc\,\star_{\!\!\! M} \, \gc^D\,\star_{\!\!\! M} \, \gc.
\end{equation*}
It is simple to check $\gc\,\star_{\!\!\! M} \, \xc\,\star_{\!\!\! M} \, \gc = \gc$ and $\gc^D\,\star_{\!\!\! M} \, \gc\,\star_{\!\!\! M} \, \xc =\xc\,\star_{\!\!\! M} \, \gc^D\,\star_{\!\!\! M} \, \gc$, while $\xc$ possesses the above form. On the other hand, Lemma \ref{4} implies
$$\gc^D=\pc\,\star_{\!\!\! M} \, \matt^{-1}\left[\diag\left(\left[
                               \begin{array}{cc}
                                 (U^{(1)})^{-1} & O \\
                                O  &O \\
                               \end{array}
                             \right],\ldots,\left[
                               \begin{array}{cc}
                                 (U^{(\eta_3)})^{-1}& O \\
                                 O & O \\
                               \end{array}
                             \right]
 \right)\right] \,\star_{\!\!\! M} \, \pc^{-1}.
$$
Now, we will prove the reverse part. Suppose
\begin{equation*}
  \xc=\matt^{-1}\left[\diag\left(\widehat{\xc})^{(1)},\ldots,(\widehat{\xc})^{(\eta_3)}\right)\right]=
  \pc\,\star_{\!\!\! M} \, \matt^{-1}\left[\diag\left(\left[
                               \begin{array}{cc}
                               S^{(1)}& L^{(1)} \\
C^{(1)}& D^{(1)}
                               \end{array}
                             \right],\ldots,\left[
                               \begin{array}{cc}
                                S^{(\eta_3)}& L^{(\eta_3)} \\
C^{(\eta_3)}& D^{(\eta_3)}
                               \end{array}
                             \right]
 \right)\right] \,\star_{\!\!\! M} \, \pc^{-1}.
\end{equation*}
Consider the partition
$$(\widehat{\xc})^{(i)}=P^{(i)}
\begin{bmatrix}
S^{(i)}& L^{(i)} \\
C^{(i)}& D^{(i)}
\end{bmatrix}(P^{(i)})^{-1}
~\text{and}~
(\widehat{\gc^D})^{(i)}=P^{(i)}\left[
\begin{array}{cc}
                                 (U^{(i)})^{-1} & O \\
                                O  &O \\
                               \end{array} \right]
(P^{(i)})^{-1}
,~i\in[\eta_3].$$
It follows from $\gc^D\,\star_{\!\!\! M} \, \gc\,\star_{\!\!\! M} \, \xc =\xc\,\star_{\!\!\! M} \, \gc^D\,\star_{\!\!\! M} \, \gc$ that $\matt(\gc^D)\matt(\gc)\matt(\xc)=\matt(\xc)\matt(\gc^D)\matt(\gc)$, which further implies
\begin{align*}
 (\widehat{\gc^D})^{(i)}(\widehat{\gc})^{(i)}(\widehat{\xc})^{(i)}=
P^{(i)}
\begin{bmatrix}
S^{(i)} & L^{(i)} \\
O &O
\end{bmatrix} (P^{(i)})^{-1}=P^{(i)}
\begin{bmatrix}
S^{(i)} & O \\
C^{(i)} &O
\end{bmatrix}(P^{(i)})^{-1}=(\widehat{\xc})^{(i)}(\widehat{\gc^D})^{(i)}(\widehat{\gc})^{(i)}.
\end{align*}
Hence, we obtain $L^{(i)}=O$ and $C^{(i)}=O$, $i\in[\eta_3]$.
On the other hand, with $\gc\,\star_{\!\!\! M} \, \xc\,\star_{\!\!\! M} \, \gc=\gc$, we get $\matt(\gc)\matt(\xc)\matt(\gc)=\matt(\gc)$. Thus,
\begin{align*}
(\widehat{\gc})^{(i)}(\widehat{\xc})^{(i)}(\widehat{\gc})^{(i)}
=P^{(i)}
\begin{bmatrix}
 U^{(i)}S^{(i)}U^{(i)}  &  O\\
O                    &  N^{(i)}D^{(i)}N^{(i)}
\end{bmatrix}(P^{(i)})^{-1}
=P^{(i)}
\begin{bmatrix}
 U^{(i)}  &  0\\
0                  &   N^{(i)}
\end{bmatrix}(P^{(i)})^{-1},~i\in[\eta_3].
\end{align*}
So, identities $S^{(i)}=(U^{(i)})^{-1}$ and $D^{(i)}=(N^{(i)})^{-}\in N^{(i)}\{1\}$ are valid. Therefore, we conclude $\xc\in\gc\{GD\}$.
\end{proof}

\begin{theorem}
Assume $\gc, \xc\in\mathbb{C}^{\eta_1\times \eta_1\times \eta_3}$ and $\ind(\gc)=k$.
In this case $\xc\in\gc\{GD\}$ if and only if
\begin{align*}
\textnormal{(I)} \  \gc\,\star_{\!\!\! M} \, \xc\,\star_{\!\!\! M} \, \gc = \gc, \quad \textnormal{(II)} \ \gc^D\,\star_{\!\!\! M} \, \gc^2\,\star_{\!\!\! M} \, \xc=\gc\,\star_{\!\!\! M} \, \gc^D=\xc\,\star_{\!\!\! M} \, \gc^D\,\star_{\!\!\! M} \, \gc^2.
\end{align*}
\end{theorem}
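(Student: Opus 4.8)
The plan is to reduce the statement to Theorem~\ref{caa}, exactly as in the proofs of Theorems~\ref{ty2} and~\ref{ass}: it suffices to check that the explicit expression \eqref{101122} for a member of $\gc\{GD\}$ is equivalent to the pair of conditions (I)--(II). Throughout I fix the core-nilpotent decomposition of $\gc$ from Lemma~\ref{3}, so that the $i$th frontal slice of $\widehat{\gc}=\gc\times_3 M$ is $(\widehat{\gc})^{(i)}=P^{(i)}\diag(U^{(i)},N^{(i)})(P^{(i)})^{-1}$ with $U^{(i)}$ nonsingular of size $r_i=\rk\big(((\widehat{\gc})^{(i)})^{k}\big)$ and $N^{(i)}$ nilpotent; by Lemma~\ref{4} the corresponding slice of $\widehat{\gc^D}$ is $P^{(i)}\diag((U^{(i)})^{-1},O)(P^{(i)})^{-1}$. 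Applying $\matt(\cdot)$ converts every $\,\star_{\!\!\! M} \,$ into an ordinary product of block-diagonal matrices, so every identity may be verified one frontal slice at a time.

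For the forward direction I take $\xc$ of the form \eqref{101122}, whose $i$th slice is $P^{(i)}\diag((U^{(i)})^{-1},(N^{(i)})^{-})(P^{(i)})^{-1}$. Condition (I) reduces slicewise to $U^{(i)}(U^{(i)})^{-1}U^{(i)}=U^{(i)}$ and $N^{(i)}(N^{(i)})^{-}N^{(i)}=N^{(i)}$, both of which hold. For (II), one first notes that both $\gc^D\,\star_{\!\!\! M} \,\gc^{2}$ and $\gc^{2}\,\star_{\!\!\! M} \,\gc^D$ have $i$th slice $P^{(i)}\diag(U^{(i)},O)(P^{(i)})^{-1}$; a direct block multiplication then shows that each of $\gc^D\,\star_{\!\!\! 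M} \,\gc^{2}\,\star_{\!\!\! M} \,\xc$, $\gc\,\star_{\!\!\! M} \,\gc^D$ and $\xc\,\star_{\!\!\! M} \,\gc^D\,\star_{\!\!\! M} \,\gc^{2}$ has $i$th slice $P^{(i)}\diag(I_{r_i},O)(P^{(i)})^{-1}$, so the triple equality in (II) holds.

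For the converse I write $\xc$ with $i$th slice $(\widehat{\xc})^{(i)}=P^{(i)}\left[\begin{smallmatrix}S^{(i)}&L^{(i)}\\ C^{(i)}&D^{(i)}\end{smallmatrix}\right](P^{(i)})^{-1}$, partitioned conformally with the $(r_i,\eta_1-r_i)$ splitting of $\gc$. The first equality in (II), read as $\matt(\gc^D)\matt(\gc)^{2}\matt(\xc)=\matt(\gc)\matt(\gc^D)$, yields slicewise $\left[\begin{smallmatrix}U^{(i)}S^{(i)}&U^{(i)}L^{(i)}\\ O&O\end{smallmatrix}\right]=\left[\begin{smallmatrix}I&O\\ O&O\end{smallmatrix}\right]$, whence $S^{(i)}=(U^{(i)})^{-1}$ and $L^{(i)}=O$. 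The second equality in (II), read as $\matt(\gc)\matt(\gc^D)=\matt(\xc)\matt(\gc^D)\matt(\gc)^{2}$, yields $\left[\begin{smallmatrix}S^{(i)}U^{(i)}&O\\ C^{(i)}U^{(i)}&O\end{smallmatrix}\right]=\left[\begin{smallmatrix}I&O\\ O&O\end{smallmatrix}\right]$, whence $C^{(i)}=O$ (and $S^{(i)}U^{(i)}=I$ is reconfirmed). Finally condition (I), read as $\matt(\gc)\matt(\xc)\matt(\gc)=\matt(\gc)$, reduces once $L^{(i)}=C^{(i)}=O$ to $N^{(i)}D^{(i)}N^{(i)}=N^{(i)}$, i.e.\ $D^{(i)}\in N^{(i)}\{1\}$. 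Thus $\xc$ has the shape \eqref{101122}, and Theorem~\ref{caa} gives $\xc\in\gc\{GD\}$.

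I do not anticipate a genuine obstacle: once the $\matt(\cdot)$ reduction is in place the argument is routine block arithmetic. The one point that deserves attention is that (II) is a \emph{double} equality, and both halves are genuinely needed --- the left one pins down the blocks $S^{(i)}$ and $L^{(i)}$, the right one pins down $C^{(i)}$ --- whereas condition (I) is exactly what supplies the inner $\{1\}$-inverse relation $N^{(i)}D^{(i)}N^{(i)}=N^{(i)}$ on the nilpotent block. It is also worth recording, for cleanliness, that $\gc^D\,\star_{\!\!\! M} \,\gc^{2}=\gc^{2}\,\star_{\!\!\! M} \,\gc^D=\gc\,\star_{\!\!\! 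M} \,\gc^D\,\star_{\!\!\! M} \,\gc$ (so the middle term $\gc\,\star_{\!\!\! M} \,\gc^D$ in (II) is just the common value $\gc\gc^D$), which keeps the slicewise identifications transparent.
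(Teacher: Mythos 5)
Your proposal is correct and follows essentially the same route as the paper: reduce to the explicit form \eqref{101122} via Theorem~\ref{caa}, pass through $\matt(\cdot)$, and do the block computation slice by slice with the core-nilpotent decomposition and Lemma~\ref{4}. The only (immaterial) difference is the order of extraction --- the paper uses condition (I) to pin down $S^{(i)}$ and $D^{(i)}$ and then (II) to force $L^{(i)}=C^{(i)}=O$, whereas you read $S^{(i)}$, $L^{(i)}$, $C^{(i)}$ off the two halves of (II) first and reserve (I) for $N^{(i)}D^{(i)}N^{(i)}=N^{(i)}$; both orderings yield the same conclusion.
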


\begin{proof}
By Theorem ${\ref{caa}}$, it is enough to prove
\begin{equation*}\label{10122}
\xc=\pc\,\star_{\!\!\! M} \,
\matt^{-1}\left[\diag\left(
\left[
                               \begin{array}{cc}
                                 (U^{(1)})^{-1} & O \\
                                 O & (N^{(1)})^{-} \\
                               \end{array}
                             \right],\ldots,\left[
                               \begin{array}{cc}
                                 (U^{(\eta_3)})^{-1} & O \\
                                 O & (N^{(\eta_3)})^{-} \\
                               \end{array}
                             \right]
\right)\right]\,\star_{\!\!\! M} \, \pc^{-1}
\end{equation*}
is equivalent to
\begin{equation*}\label{fff}
\textnormal{(I)} \ \gc\,\star_{\!\!\! M} \, \xc\,\star_{\!\!\! M} \, \gc = \gc, \quad \textnormal{(II)} \ \gc^D\,\star_{\!\!\! M} \, \gc^2\,\star_{\!\!\! M} \, \xc=\gc\,\star_{\!\!\! M} \, \gc^D=\xc\,\star_{\!\!\! M} \, \gc^D\,\star_{\!\!\! M} \, \gc^2.
\end{equation*}
It is simple to check $\gc\,\star_{\!\!\! M} \, \xc\,\star_{\!\!\! M} \, \gc = \gc$ and $\gc^D\,\star_{\!\!\! M} \, \gc^2\,\star_{\!\!\! M} \, \xc=\gc\,\star_{\!\!\! M} \, \gc^D=\xc\,\star_{\!\!\! M} \, \gc^D\,\star_{\!\!\! M} \, \gc^2$ for $\xc$ of the above form.
Now, by Lemma \ref{4}, conclusion is
$$
  \gc^D=\pc\,\star_{\!\!\! M} \, \matt^{-1}\left[\diag\left(\left[
                               \begin{array}{cc}
                                 (U^{(1)})^{-1} & O \\
                                O  &O \\
                               \end{array}
                             \right],\ldots,\left[
                               \begin{array}{cc}
                                 (U^{(\eta_3)})^{-1}& O \\
                                 O & O \\
                               \end{array}
                             \right]\right) \right]
  \,\star_{\!\!\! M} \, \pc^{-1}.
$$
Next, we will prove the reverse part. Suppose
\begin{equation*}
  \xc=\matt^{-1}\left[\diag\left( (\widehat{\xc})^{(1)},\cdots,(\widehat{\xc})^{(\eta_3)}\right)\right]=
  \pc\,\star_{\!\!\! M} \, \matt^{-1}\left[\diag\left(\left[
                               \begin{array}{cc}
                               S^{(1)}& L^{(1)} \\
C^{(1)}& D^{(1)}
                               \end{array}
                             \right],\ldots,\left[
                               \begin{array}{cc}
                                S^{(\eta_3)}& L^{(\eta_3)} \\
C^{(\eta_3)}& D^{(\eta_3)}
                               \end{array}\right]
                           \right)  \right]
  \,\star_{\!\!\! M} \, \pc^{-1}.
\end{equation*}
Consider the partition
$$(\widehat{\xc})^{(i)}=P^{(i)}
\begin{bmatrix}
S^{(i)}& L^{(i)} \\
C^{(i)}& D^{(i)}
\end{bmatrix}(P^{(i)})^{-1}
~\text{and}~
(\widehat{\gc^D})^{(i)}=P^{(i)}\left[
\begin{array}{cc}
                                 (U^{(i)})^{-1} & O \\
                                O  &O \\
                               \end{array} \right]
(P^{(i)})^{-1}
,~i\in[\eta_3].$$
With $\gc\,\star_{\!\!\! M} \, \xc\,\star_{\!\!\! M} \, \gc=\gc$, we get $\matt(\gc)\matt(\xc)\matt(\gc)=\matt(\gc)$. Thus,
\begin{align*}
(\widehat{\gc})^{(i)}(\widehat{\xc})^{(i)}(\widehat{\gc})^{(i)}
=P^{(i)}
\begin{bmatrix}
 U^{(i)}S^{(i)}U^{(i)}  &  U^{(i)}L^{(i)}N^{(i)}\\
N^{(i)}C^{(i)}U^{(i)}                   &  N^{(i)}D^{(i)}N^{(i)}
\end{bmatrix}(P^{(i)})^{-1}=P^{(i)}
\begin{bmatrix}
 U^{(i)}  &  O\\
O                  &   N^{(i)}
\end{bmatrix}(P^{(i)})^{-1},~i\in[\eta_3].
\end{align*}
So, we have $S^{(i)}=(U^{(i)})^{-1}$ and $D^{(i)}=(N^{(i)})^{-}\in N^{(i)}\{1\}$.

On the other hand, by $\gc^D\,\star_{\!\!\! M} \, \gc^2\,\star_{\!\!\! M} \, \xc=\gc\,\star_{\!\!\! M} \, \gc^D=\xc\,\star_{\!\!\! M} \, \gc^D\,\star_{\!\!\! M} \, \gc^2$,
we get
$$\matt(\gc^D)\matt(\gc^2)\matt(\xc)=\matt(\gc)\matt(\gc^D)=\matt(\xc)\matt(\gc^D)\matt(\gc^2),$$
which implies
$$(\widehat{\gc^D})^{(i)}(\widehat{\gc^2})^{(i)}(\widehat{\xc})^{(i)}=P^{(i)}
\begin{bmatrix}
U^{(i)}S^{(i)} & U^{(i)}L^{(i)} \\
O &O
\end{bmatrix}(P^{-1})^{(i)}
=P^{(i)}
\begin{bmatrix}
I & O \\
O &O
\end{bmatrix}(P^{-1})^{(i)}=(\widehat{\gc})^{(i)}(\widehat{\gc^D})^{(i)}$$
and
$$(\widehat{\xc})^{(i)}(\widehat{\gc^D})^{(i)}(\widehat{\gc^2})^{(i)}=P^{(i)}
\begin{bmatrix}
S^{(i)}U^{(i)} &O \\
C^{(i)}U^{(i)} &O
\end{bmatrix}(P^{-1})^{(i)}
=P^{(i)}
\begin{bmatrix}
I & O \\
O &O
\end{bmatrix}(P^{-1})^{(i)}=(\widehat{\gc})^{(i)}(\widehat{\gc^D})^{(i)}.$$
Hence, we obtain $L^{(i)}=O$ and $C^{(i)}=O$, $i\in[\eta_3]$.
 Therefore, we conclude $\xc\in\gc\{GD\}$.
\end{proof}

The following three theorems are the reverse order law of the GD inverse.

\begin{theorem}\label{T5}
Let $\gc, \hc\in\mathbb{C}^{\eta_1\times \eta_1\times \eta_3}$ and $\gc\,\star_{\!\!\! M} \, \hc^2=\hc^2\,\star_{\!\!\! M} \, \gc=\hc\,\star_{\!\!\! M} \, \gc\,\star_{\!\!\! M} \, \hc$. Suppose $k=\max\{\ind(\gc), \ind(\hc)\}$. If
$$\gc\,\star_{\!\!\! M} \, \hc\,\star_{\!\!\! M} \, \hc^{GD}=\hc\,\star_{\!\!\! M} \, \hc^{GD}\,\star_{\!\!\! M} \, \gc, \ \ \   \hc\,\star_{\!\!\! M} \, \hc^{GD}\,\star_{\!\!\! M} \, \gc^{GD}=\gc^{GD}\,\star_{\!\!\! M} \, \hc\,\star_{\!\!\! M} \, \hc^{GD},$$
then $(\gc\,\star_{\!\!\! M} \, \hc)^{GD}=\hc^{GD}\,\star_{\!\!\! M} \, \gc^{GD}$.
\begin{proof}
If $\gc\,\star_{\!\!\! M} \, \hc^2=\hc^2\,\star_{\!\!\! M} \, \gc=\hc\,\star_{\!\!\! M} \, \gc\,\star_{\!\!\! M} \, \hc$, then $${(\gc\,\star_{\!\!\! M} \, \hc)}^k=\gc^k\,\star_{\!\!\! M} \, \hc^k=\hc^k\,\star_{\!\!\! M} \, \gc^k, \ \gc^{k+1}\,\star_{\!\!\! M} \, \hc^k=\hc^k\,\star_{\!\!\! M} \, \gc^{k+1}, \ \gc^k\,\star_{\!\!\! M} \, \hc^{k+1}=\hc^{k+1}\,\star_{\!\!\! M} \, \gc^k.$$
Now, one has
$$\gc\,\star_{\!\!\! M} \, \hc\,\star_{\!\!\! M} \, \hc^{GD}\,\star_{\!\!\! M} \, \gc^{GD}\,\star_{\!\!\! M} \, \gc\,\star_{\!\!\! M} \, \hc=
\gc\,\star_{\!\!\! M} \, \gc^{GD}\,\star_{\!\!\! M} \, \hc\star\hc^{GD}\,\star_{\!\!\! M} \, \gc\,\star_{\!\!\! M} \, \hc=\gc\,\star_{\!\!\! M} \, \gc^{GD}\,\star_{\!\!\! M} \, \gc\,\star_{\!\!\! M} \, \hc\,\star_{\!\!\! M} \, \hc^{GD}\,\star_{\!\!\! M} \, \hc=\gc\,\star_{\!\!\! M} \, \hc.$$
Furthermore, it follows
\begin{align*}
  \hc^{GD}\,\star_{\!\!\! M} \, \gc^{GD}\,\star_{\!\!\! M} \, (\gc\,\star_{\!\!\! M} \, \hc)^{k+1}&= \hc^{GD}\,\star_{\!\!\! M} \, \gc^{GD}\,\star_{\!\!\! M} \, \gc^{k+1}\,\star_{\!\!\! M} \, \hc^{k+1}
  = \hc^{GD}\,\star_{\!\!\! M} \, \gc^{k}\,\star_{\!\!\! M} \, \hc^{k+1}\\
  &= \hc^{GD}\,\star_{\!\!\! M} \, \hc^{k+1}\,\star_{\!\!\! M} \, \gc^{k}
  = \hc^k\,\star_{\!\!\! M} \, \gc^k
  =\gc^k\,\star_{\!\!\! M} \, \hc^k
  =(\gc\,\star_{\!\!\! M} \, \hc)^k,
\end{align*}
as well as
\begin{align*}
 (\gc\,\star_{\!\!\! M} \, \hc)^{k+1}\,\star_{\!\!\! M} \, \hc^{GD}\,\star_{\!\!\! M} \, \gc^{GD}&=\gc^{k+1}\,\star_{\!\!\! M} \,
 \hc^{k+1}\,\star_{\!\!\! M} \, \hc^{GD}\,\star_{\!\!\! M} \, \gc^{GD}
 =\gc^{k+1}\,\star_{\!\!\! M} \, \hc^{k}\,\star_{\!\!\! M} \, \gc^{GD}\\
 &=\hc^{k}\,\star_{\!\!\! M} \, \gc^{k+1}\,\star_{\!\!\! M} \, \gc^{GD}
 =\hc^{k}\,\star_{\!\!\! M} \, \gc^{k}
 =(\gc\,\star_{\!\!\! M} \, \hc)^k.
\end{align*}
Using Theorem \ref{ty2}, it clearly shows that $(\gc\,\star_{\!\!\! M} \, \hc)^{GD}=\hc^{GD}\,\star_{\!\!\! M} \, \gc^{GD}$.
\end{proof}
\end{theorem}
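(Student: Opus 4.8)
The plan is to prove that $\hc^{GD}\,\star_{\!\!\! M} \, \gc^{GD}$ is a GD inverse of $\gc\,\star_{\!\!\! M} \, \hc$ by verifying relations $\textnormal{(I)}$--$\textnormal{(III)}$ of Definition~\ref{d07} (equivalently, the two conditions of Theorem~\ref{ty2}) for the tensor $\gc\,\star_{\!\!\! M} \, \hc$ with candidate $\xc:=\hc^{GD}\,\star_{\!\!\! M} \, \gc^{GD}$. The work divides into two parts: (a) converting the quasi-commutativity hypothesis $\gc\,\star_{\!\!\! M} \, \hc^2=\hc^2\,\star_{\!\!\! M} \, \gc=\hc\,\star_{\!\!\! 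M} \, \gc\,\star_{\!\!\! M} \, \hc$ into power identities, and (b) ``idempotent shuffling'', i.e.\ moving the projectors $\gc\,\star_{\!\!\! M} \, \gc^{GD}$ and $\hc\,\star_{\!\!\! M} \, \hc^{GD}$ past $\gc$, $\hc$, $\gc^{GD}$, $\hc^{GD}$ using the two extra hypotheses.

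First I would extract the consequences of the quasi-commutativity. A routine induction on $n$ (first showing that $\gc$ commutes with $\hc^{n}$ for $n\ge 2$, then deducing the rest) gives the identities
\begin{align*}
(\gc\,\star_{\!\!\! M} \, \hc)^n&=\gc^n\,\star_{\!\!\! M} \, \hc^n=\hc^n\,\star_{\!\!\! M} \, \gc^n,\\
\gc^{n+1}\,\star_{\!\!\! M} \, \hc^n&=\hc^n\,\star_{\!\!\! M} \, \gc^{n+1},\\
\gc^{n}\,\star_{\!\!\! M} \, \hc^{n+1}&=\hc^{n+1}\,\star_{\!\!\! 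M} \, \gc^{n}
\end{align*}
for the relevant $n$, in particular for $n=k$. Comparing $(\gc\,\star_{\!\!\! M} \, \hc)^k$ and $(\gc\,\star_{\!\!\! M} \, \hc)^{k+1}$ slice by slice via $\matt(\cdot)$ and using these identities also shows $\ind(\gc\,\star_{\!\!\! M} \, \hc)\le k$; since, by the core--nilpotent form in Theorem~\ref{caa}, the GD relations of Definition~\ref{d07} at an exponent $m\ge\ind(\gc\,\star_{\!\!\! M} \, \hc)$ are equivalent for all such $m$, it is enough to check $\textnormal{(I)}$--$\textnormal{(III)}$ for $\gc\,\star_{\!\!\! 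M} \, \hc$ at the exponent $k$.

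For $\textnormal{(I)}$ I would start from $\gc\,\star_{\!\!\! M} \, \hc\,\star_{\!\!\! M} \, \xc\,\star_{\!\!\! M} \, \gc\,\star_{\!\!\! M} \, \hc=\gc\,\star_{\!\!\! M} \, \hc\,\star_{\!\!\! M} \, \hc^{GD}\,\star_{\!\!\! M} \, \gc^{GD}\,\star_{\!\!\! M} \, \gc\,\star_{\!\!\! M} \, \hc$, use the hypothesis $\hc\,\star_{\!\!\! M} \, \hc^{GD}\,\star_{\!\!\! M} \, \gc^{GD}=\gc^{GD}\,\star_{\!\!\! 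M} \, \hc\,\star_{\!\!\! M} \, \hc^{GD}$ to expose the block $\gc\,\star_{\!\!\! M} \, \gc^{GD}\,\star_{\!\!\! M} \, \gc$ and collapse it via $\gc\,\star_{\!\!\! M} \, \gc^{GD}\,\star_{\!\!\! M} \, \gc=\gc$, then apply $\hc\,\star_{\!\!\! M} \, \hc^{GD}\,\star_{\!\!\! M} \, \gc=\gc\,\star_{\!\!\! M} \, \hc\,\star_{\!\!\! M} \, \hc^{GD}$ and $\hc\,\star_{\!\!\! M} \, \hc^{GD}\,\star_{\!\!\! M} \, \hc=\hc$ to land on $\gc\,\star_{\!\!\! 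M} \, \hc$. For $\textnormal{(II)}$ and $\textnormal{(III)}$ I would expand $\xc\,\star_{\!\!\! M} \,(\gc\,\star_{\!\!\! M} \, \hc)^{k+1}$ and $(\gc\,\star_{\!\!\! M} \, \hc)^{k+1}\,\star_{\!\!\! M} \, \xc$ using the power identities, cancel powers with $\gc^{GD}\,\star_{\!\!\! M} \, \gc^{k+1}=\gc^k$, $\gc^{k+1}\,\star_{\!\!\! M} \, \gc^{GD}=\gc^k$ and the $\hc$-analogues, and interleave the mixed power identities to bring matching powers together; both expressions collapse to $\gc^k\,\star_{\!\!\! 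M} \, \hc^k=\hc^k\,\star_{\!\!\! M} \, \gc^k=(\gc\,\star_{\!\!\! M} \, \hc)^k$. An appeal to Theorem~\ref{ty2} then yields $(\gc\,\star_{\!\!\! M} \, \hc)^{GD}=\hc^{GD}\,\star_{\!\!\! M} \, \gc^{GD}$.

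The main obstacle is the bookkeeping in $\textnormal{(I)}$: the two extra hypotheses must be applied in the right order, and each is used \emph{twice}, once as stated and once reversed, so keeping the chain of equalities straight is the one genuinely fiddly point. A secondary subtlety is that the defining relations of $\gc^{GD}$ and $\hc^{GD}$ -- especially $\gc^{GD}\,\star_{\!\!\! M} \, \gc^{k+1}=\gc^k$ -- are invoked at the common exponent $k=\max\{\ind(\gc),\ind(\hc)\}$ rather than at $\ind(\gc)$ or $\ind(\hc)$ separately; this is harmless because those relations survive raising of the exponent, and it is precisely why the statement takes $k$ to be the larger of the two indices. The power-identity induction and the substitutions in $\textnormal{(II)}$--$\textnormal{(III)}$ are otherwise routine.
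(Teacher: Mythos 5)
Your proposal is correct and follows essentially the same route as the paper: derive the power identities $(\gc\,\star_{\!\!\! M} \,\hc)^k=\gc^k\,\star_{\!\!\! M} \,\hc^k=\hc^k\,\star_{\!\!\! M} \,\gc^k$ (and the mixed ones) from the quasi-commutativity hypothesis, verify condition (I) by the same two-stage shuffle of $\hc\,\star_{\!\!\! M} \,\hc^{GD}$ past $\gc^{GD}$ and then past $\gc$, and collapse $\xc\,\star_{\!\!\! M} \,(\gc\,\star_{\!\!\! M} \,\hc)^{k+1}$ and $(\gc\,\star_{\!\!\! M} \,\hc)^{k+1}\,\star_{\!\!\! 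M} \,\xc$ to $(\gc\,\star_{\!\!\! M} \,\hc)^k$. Your added remarks on $\ind(\gc\,\star_{\!\!\! M} \,\hc)\le k$ and on the stability of the defining relations under raising the exponent are points the paper leaves implicit, and they only make the argument more careful.
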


\begin{theorem} \label{T6}
Let $\gc, \hc\in\mathbb{C}^{\eta_1\times \eta_1\times \eta_3}$ and $k=\max\{\ind(\gc), \ind(\hc)\}$. If
$$\gc\,\star_{\!\!\! M} \, \hc=\hc\,\star_{\!\!\! M} \, \gc, \ \ \  \hc\,\star_{\!\!\! M} \, \hc^{GD}\,\star_{\!\!\! M} \, \gc^{GD}=\gc^{GD}\,\star_{\!\!\! M} \, \hc\,\star_{\!\!\! M} \, \hc^{GD},$$
then $(\gc\,\star_{\!\!\! M} \, \hc)^{GD}=\hc^{GD}\,\star_{\!\!\! M} \, \gc^{GD}$.
\end{theorem}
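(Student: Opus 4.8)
The plan is to follow the pattern of the proof of Theorem~\ref{T5}: set $\xc:=\hc^{GD}\,\star_{\!\!\! M} \,\gc^{GD}$ and verify directly that $\xc$ satisfies the three defining identities $\textnormal{(I)}$--$\textnormal{(III)}$ of Definition~\ref{d07} for the tensor $\gc\,\star_{\!\!\! M} \,\hc$, using the exponent $k=\max\{\ind(\gc),\ind(\hc)\}$. This is legitimate: since $\gc$ and $\hc$ commute, passing to $\matt(\cdot)$ reduces the index question to commuting matrices frontal-slice-wise and gives $\ind(\gc\,\star_{\!\!\! M} \,\hc)\le k$, while the identities of Definition~\ref{d07} determine the same tensors when written with any exponent $\ge$ the index, as is visible already in the block computation of Theorem~\ref{caa}.

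The first step is to extract the consequences of $\gc\,\star_{\!\!\! M} \,\hc=\hc\,\star_{\!\!\! M} \,\gc$: we get $\gc^{r}\,\star_{\!\!\! M} \,\hc^{s}=\hc^{s}\,\star_{\!\!\! M} \,\gc^{r}$ for all $r,s\ge 0$, and hence $(\gc\,\star_{\!\!\! M} \,\hc)^{m}=\gc^{m}\,\star_{\!\!\! M} \,\hc^{m}$ for all $m$. In addition I will use the GD-defining identities for $\gc$ and $\hc$ separately, notably $\gc\,\star_{\!\!\! M} \,\gc^{GD}\,\star_{\!\!\! M} \,\gc=\gc$, $\gc^{GD}\,\star_{\!\!\! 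M} \,\gc^{k+1}=\gc^{k}$, $\gc^{k+1}\,\star_{\!\!\! M} \,\gc^{GD}=\gc^{k}$ and the $\hc$-analogues.

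Identities $\textnormal{(II)}$ and $\textnormal{(III)}$, that is $\xc\,\star_{\!\!\! M} \,(\gc\,\star_{\!\!\! M} \,\hc)^{k+1}=(\gc\,\star_{\!\!\! M} \,\hc)^{k}$ and $(\gc\,\star_{\!\!\! M} \,\hc)^{k+1}\,\star_{\!\!\! M} \,\xc=(\gc\,\star_{\!\!\! M} \,\hc)^{k}$, should come out purely from commutativity, with no appeal to the second hypothesis: for $\textnormal{(II)}$ one expands $(\gc\,\star_{\!\!\! M} \,\hc)^{k+1}=\gc^{k+1}\,\star_{\!\!\! M} \,\hc^{k+1}$, collapses $\gc^{GD}\,\star_{\!\!\! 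M} \,\gc^{k+1}=\gc^{k}$, slides $\gc^{k}$ past $\hc^{k+1}$ by commutativity, and collapses $\hc^{GD}\,\star_{\!\!\! M} \,\hc^{k+1}=\hc^{k}$; identity $\textnormal{(III)}$ is the mirror image of this computation.

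The one place where the hypothesis $\hc\,\star_{\!\!\! M} \,\hc^{GD}\,\star_{\!\!\! M} \,\gc^{GD}=\gc^{GD}\,\star_{\!\!\! M} \,\hc\,\star_{\!\!\! M} \,\hc^{GD}$ is really needed — and hence the step requiring the most care — is identity $\textnormal{(I)}$, $(\gc\,\star_{\!\!\! M} \,\hc)\,\star_{\!\!\! M} \,\xc\,\star_{\!\!\! M} \,(\gc\,\star_{\!\!\! M} \,\hc)=\gc\,\star_{\!\!\! M} \,\hc$. Starting from $\gc\,\star_{\!\!\! M} \,\hc\,\star_{\!\!\! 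M} \,\hc^{GD}\,\star_{\!\!\! M} \,\gc^{GD}\,\star_{\!\!\! M} \,\gc\,\star_{\!\!\! M} \,\hc$, I would use the hypothesis to interchange $\hc\,\star_{\!\!\! M} \,\hc^{GD}$ with $\gc^{GD}$, then apply $\gc\,\star_{\!\!\! M} \,\hc=\hc\,\star_{\!\!\! M} \,\gc$ to place an $\hc$ immediately after $\hc\,\star_{\!\!\! M} \,\hc^{GD}$ and collapse the middle via $\hc\,\star_{\!\!\! M} \,\hc^{GD}\,\star_{\!\!\! M} \,\hc=\hc$, and finally use commutativity once more together with $\gc\,\star_{\!\!\! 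M} \,\gc^{GD}\,\star_{\!\!\! M} \,\gc=\gc$ to arrive at $\gc\,\star_{\!\!\! M} \,\hc$. Having checked $\textnormal{(I)}$--$\textnormal{(III)}$, Definition~\ref{d07} (equivalently Theorem~\ref{ty2}) yields $\xc\in(\gc\,\star_{\!\!\! M} \,\hc)\{GD\}$, i.e.\ $(\gc\,\star_{\!\!\! M} \,\hc)^{GD}=\hc^{GD}\,\star_{\!\!\! M} \,\gc^{GD}$.
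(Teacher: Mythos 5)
Your proposal is correct and follows essentially the same route as the paper: it sets $\xc=\hc^{GD}\,\star_{\!\!\! M} \,\gc^{GD}$ and verifies the three defining identities of Definition \ref{d07} directly, using the hypothesis $\hc\,\star_{\!\!\! M} \,\hc^{GD}\,\star_{\!\!\! M} \,\gc^{GD}=\gc^{GD}\,\star_{\!\!\! M} \,\hc\,\star_{\!\!\! M} \,\hc^{GD}$ only for identity (I) and pure commutativity for (II) and (III), exactly as in the paper's displays (\ref{sss})--(\ref{vvv}). Your added remark that $\ind(\gc\,\star_{\!\!\! M} \,\hc)\le k$ for commuting factors, and that the defining equations are insensitive to raising the exponent above the index, is a point the paper glosses over, and it is a correct and welcome refinement rather than a deviation.
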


\begin{proof}
Firstly, we calculate
$$\gc\,\star_{\!\!\! M} \, \gc^{GD}\,\star_{\!\!\! M} \, \gc=\gc,\ \gc^{GD}\,\star_{\!\!\! M} \, \gc^{k+1}=\gc^k,\ \gc^{k+1}\,\star_{\!\!\! M} \, \gc^{GD}=\gc^k$$ and $$\hc\,\star_{\!\!\! M} \, \hc^{GD}\,\star_{\!\!\! M} \, \hc=\hc,\ \hc^{GD}\,\star_{\!\!\! M} \, \hc^{k+1}=\hc^k,\ \hc^{k+1}\,\star_{\!\!\! M} \, \hc^{GD}=\hc^k.$$
Now it is
\begin{align}\label{sss}
  \gc\,\star_{\!\!\! M} \, \hc\,\star_{\!\!\! M} \, \hc^{GD}\,\star_{\!\!\! M} \, \gc^{GD}\,\star_{\!\!\! M} \, \gc\,\star_{\!\!\! M} \, \hc&=
  \gc\,\star_{\!\!\! M} \, \gc^{GD}\,\star_{\!\!\! M} \, \hc\,\star_{\!\!\! M} \, \hc^{GD}\,\star_{\!\!\! M} \, \gc\,\star_{\!\!\! M} \, \hc
  =\gc\,\star_{\!\!\! M} \, \gc^{GD}\,\star_{\!\!\! M} \, \hc\,\star_{\!\!\! M} \, \hc^{GD}\,\star_{\!\!\! M} \, \hc\,\star_{\!\!\! M} \, \gc \nonumber \\
  &=\gc\,\star_{\!\!\! M} \, \gc^{GD}\,\star_{\!\!\! M} \, \hc\,\star_{\!\!\! M} \, \gc
  =\gc\,\star_{\!\!\! M} \, \gc^{GD}\,\star_{\!\!\! M} \, \gc\,\star_{\!\!\! M} \, \hc
  =\gc\,\star_{\!\!\! M} \, \hc,
\end{align}
\begin{align}\label{ddd}
 \hc^{GD}\,\star_{\!\!\! M} \, \gc^{GD}\,\star_{\!\!\! M} \, (\gc\,\star_{\!\!\! M} \, \hc)^{k+1}&= \hc^{GD}\,\star_{\!\!\! M} \, \gc^{GD}\,\star_{\!\!\! M} \, \gc^{k+1}\,\star_{\!\!\! M} \, \hc^{k+1}
  = \hc^{GD}\,\star_{\!\!\! M} \, \gc^{k}\,\star_{\!\!\! M} \, \hc^{k+1}\nonumber\\
  &= \hc^{GD}\,\star_{\!\!\! M} \, \hc^{k+1}\,\star_{\!\!\! M} \, \gc^{k}
  = \hc^k\,\star_{\!\!\! M} \, \gc^k
  =\gc^k\,\star_{\!\!\! M} \, \hc^k
  =(\gc\,\star_{\!\!\! M} \, \hc)^k,
\end{align}
and
\begin{align}\label{vvv}
 (\gc\,\star_{\!\!\! M} \, \hc)^{k+1}\,\star_{\!\!\! M} \, \hc^{GD}\,\star_{\!\!\! M} \, \gc^{GD}&=\gc^{k+1}\,\star_{\!\!\! M} \,
 \hc^{k+1}\,\star_{\!\!\! M} \, \hc^{GD}\,\star_{\!\!\! M} \, \gc^{GD}
 =\gc^{k+1}\,\star_{\!\!\! M} \, \hc^{k}\,\star_{\!\!\! M} \, \gc^{GD}\nonumber\\
 &=\hc^{k}\,\star_{\!\!\! M} \, \gc^{k+1}\,\star_{\!\!\! M} \, \gc^{GD}
 =\hc^{k}\,\star_{\!\!\! M} \, \gc^{k}
 =(\gc\,\star_{\!\!\! M} \, \hc)^k.
\end{align}
An application of $({\ref{sss}})$, $({\ref{ddd}})$, and $({\ref{vvv}})$ leads to $(\gc\,\star_{\!\!\! M} \, \hc)^{GD}=\hc^{GD}\,\star_{\!\!\! M} \, \gc^{GD}$.
\end{proof}

\begin{theorem}\label{tt7}
Let $\gc, \hc\in\mathbb{C}^{\eta_1\times \eta_1\times \eta_3}$ and $k=\max\{\ind(\gc), \ind(\hc)\}$. If
$$\gc\,\star_{\!\!\! M} \, \hc=\hc\,\star_{\!\!\! M} \, \gc, \ \ \ \hc^{GD}\,\star_{\!\!\! M} \, \hc\,\star_{\!\!\! M} \, \gc=\gc\,\star_{\!\!\! M} \, \hc^{GD}\,\star_{\!\!\! M} \, \hc,$$
then $(\gc\,\star_{\!\!\! M} \, \hc)^{GD}=\gc^{GD}\,\star_{\!\!\! M} \, \hc^{GD}$.
\end{theorem}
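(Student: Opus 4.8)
The plan is to verify directly that $\xc:=\gc^{GD}\,\star_{\!\!\! M} \,\hc^{GD}$ satisfies the three defining identities of Definition \ref{d07} for the tensor $\gc\,\star_{\!\!\! M} \,\hc$, following the scheme used in Theorem \ref{T6} but with the factors in the non-reversed order. First I would record the consequences of the commutativity hypothesis $\gc\,\star_{\!\!\! M} \,\hc=\hc\,\star_{\!\!\! M} \,\gc$: every power of $\gc$ commutes with every power of $\hc$, and $(\gc\,\star_{\!\!\! M} \,\hc)^{j}=\gc^{j}\,\star_{\!\!\! M} \,\hc^{j}$ for all $j$; in particular it is legitimate to test the conditions of Definition \ref{d07} with the common exponent $k=\max\{\ind(\gc),\ind(\hc)\}$, as already done in Theorems \ref{T5} and \ref{T6}. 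I would also recall the one-sided GD identities for each factor, namely $\gc\,\star_{\!\!\! M} \,\gc^{GD}\,\star_{\!\!\! M} \,\gc=\gc$, $\gc^{GD}\,\star_{\!\!\! M} \,\gc^{k+1}=\gc^{k}$, $\gc^{k+1}\,\star_{\!\!\! M} \,\gc^{GD}=\gc^{k}$, and the analogous three for $\hc$.

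Conditions (II) and (III) are the routine ones and use only commutativity of the powers together with the one-sided GD identities: starting from $(\gc\,\star_{\!\!\! M} \,\hc)^{k+1}\,\star_{\!\!\! M} \,\xc=\gc^{k+1}\,\star_{\!\!\! M} \,\hc^{k+1}\,\star_{\!\!\! M} \,\gc^{GD}\,\star_{\!\!\! M} \,\hc^{GD}$, one commutes $\gc^{k+1}$ past $\hc^{k+1}$, collapses $\gc^{k+1}\,\star_{\!\!\! M} \,\gc^{GD}=\gc^{k}$, commutes $\hc^{k+1}$ past $\gc^{k}$, and collapses $\hc^{k+1}\,\star_{\!\!\! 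M} \,\hc^{GD}=\hc^{k}$, obtaining $\gc^{k}\,\star_{\!\!\! M} \,\hc^{k}=(\gc\,\star_{\!\!\! M} \,\hc)^{k}$; condition (II) is symmetric (collapse on the $\gc$-side first). The real content is condition (I): here I would begin from $\gc\,\star_{\!\!\! M} \,\hc\,\star_{\!\!\! M} \,\gc^{GD}\,\star_{\!\!\! M} \,\hc^{GD}\,\star_{\!\!\! M} \,\gc\,\star_{\!\!\! M} \,\hc$, rewrite the tail $\hc^{GD}\,\star_{\!\!\! M} \,\gc\,\star_{\!\!\! M} \,\hc=\hc^{GD}\,\star_{\!\!\! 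M} \,\hc\,\star_{\!\!\! M} \,\gc$ by commutativity, apply the second hypothesis $\hc^{GD}\,\star_{\!\!\! M} \,\hc\,\star_{\!\!\! M} \,\gc=\gc\,\star_{\!\!\! M} \,\hc^{GD}\,\star_{\!\!\! M} \,\hc$ to that tail, then swap the leading $\gc\,\star_{\!\!\! M} \,\hc$ into $\hc\,\star_{\!\!\! M} \,\gc$ so that the block $\gc\,\star_{\!\!\! M} \,\gc^{GD}\,\star_{\!\!\! M} \,\gc$ becomes adjacent, collapse it to $\gc$, commute back, and finally collapse $\hc\,\star_{\!\!\! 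M} \,\hc^{GD}\,\star_{\!\!\! M} \,\hc$ to $\hc$; the product reduces to $\gc\,\star_{\!\!\! M} \,\hc$.

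With the three identities $(\gc\,\star_{\!\!\! M} \,\hc)\,\star_{\!\!\! M} \,\xc\,\star_{\!\!\! M} \,(\gc\,\star_{\!\!\! M} \,\hc)=\gc\,\star_{\!\!\! M} \,\hc$, $\xc\,\star_{\!\!\! M} \,(\gc\,\star_{\!\!\! M} \,\hc)^{k+1}=(\gc\,\star_{\!\!\! M} \,\hc)^{k}$ and $(\gc\,\star_{\!\!\! M} \,\hc)^{k+1}\,\star_{\!\!\! M} \,\xc=(\gc\,\star_{\!\!\! M} \,\hc)^{k}$ in hand, Definition \ref{d07} gives $\xc\in(\gc\,\star_{\!\!\! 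M} \,\hc)\{GD\}$, i.e.\ $(\gc\,\star_{\!\!\! M} \,\hc)^{GD}=\gc^{GD}\,\star_{\!\!\! M} \,\hc^{GD}$. The one delicate point is the bookkeeping in condition (I): the elementary rewrites must be ordered so that every invocation of a GD identity has its matching factors side by side, and the hypothesis $\hc^{GD}\,\star_{\!\!\! M} \,\hc\,\star_{\!\!\! M} \,\gc=\gc\,\star_{\!\!\! M} \,\hc^{GD}\,\star_{\!\!\! M} \,\hc$ is precisely the move that makes $\gc\,\star_{\!\!\! M} \,\gc^{GD}\,\star_{\!\!\! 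M} \,\gc$ appear; in fact it is the only hypothesis used beyond commutativity and the one-sided GD identities, since (II) and (III) never invoke it. If one prefers, the whole argument can instead be run slicewise on the block-diagonal matrices $\matt(\gc)$, $\matt(\hc)$ through the $\matt$-representation of the M-product, using the core-nilpotent decomposition of Lemma \ref{3}, but the direct $\,\star_{\!\!\! M} \,$ computation above is the shortest route and matches the style of Theorem \ref{T6}.
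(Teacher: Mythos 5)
Your proposal is correct and takes essentially the same route as the paper: one verifies the three conditions of Definition \ref{d07} for $\xc=\gc^{GD}\,\star_{\!\!\! M} \,\hc^{GD}$ directly, using only commutativity of powers for conditions (II)--(III) and invoking the hypothesis $\hc^{GD}\,\star_{\!\!\! M} \,\hc\,\star_{\!\!\! M} \,\gc=\gc\,\star_{\!\!\! M} \,\hc^{GD}\,\star_{\!\!\! M} \,\hc$ exactly once in condition (I) to expose the block $\gc\,\star_{\!\!\! M} \,\gc^{GD}\,\star_{\!\!\! M} \,\gc$, which reproduces the paper's computation almost verbatim. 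The only slip is your parenthetical for condition (II): since $\hc^{GD}$ sits between $\gc^{GD}$ and $\gc^{k+1}$ and is not known to commute with $\gc$, one must first commute the powers and collapse $\hc^{GD}\,\star_{\!\!\! M} \,\hc^{k+1}$ (the $\hc$-side), not the $\gc$-side, exactly as in the paper's corresponding display.
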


\begin{proof}
Clearly, we have $$\gc\,\star_{\!\!\! M} \, \gc^{GD}\,\star_{\!\!\! M} \, \gc=\gc,\ \gc^{GD}\,\star_{\!\!\! M} \, \gc^{k+1}=\gc^k,\  \gc^{k+1}\,\star_{\!\!\! M} \, \gc^{GD}=\gc^k$$
and
$$\hc\,\star_{\!\!\! M} \, \hc^{GD}\,\star_{\!\!\! M} \, \hc=\hc,\ \hc^{GD}\,\star_{\!\!\! M} \, \hc^{k+1}=\hc^k,\ \hc^{k+1}\,\star_{\!\!\! M} \, \hc^{GD}=\hc^k.$$ Now,
\begin{align}\label{eee}
  \gc\,\star_{\!\!\! M} \, \hc\,\star_{\!\!\! M} \, \gc^{GD}\,\star_{\!\!\! M} \, \hc^{GD}\,\star_{\!\!\! M} \, \gc\,\star_{\!\!\! M} \, \hc&=
  \hc\,\star_{\!\!\! M} \, \gc\,\star_{\!\!\! M} \, \gc^{GD}\,\star_{\!\!\! M} \, \hc^{GD}\,\star_{\!\!\! M} \, \hc\,\star_{\!\!\! M} \, \gc
  =\hc\,\star_{\!\!\! M} \, \gc\,\star_{\!\!\! M} \, \gc^{GD}\,\star_{\!\!\! M} \, \gc\,\star_{\!\!\! M} \, \hc^{GD}\,\star_{\!\!\! M} \, \hc \nonumber \\
  &=\hc\,\star_{\!\!\! M} \, \gc\,\star_{\!\!\! M} \, \hc^{GD}\,\star_{\!\!\! M} \, \hc
  =\gc\,\star_{\!\!\! M} \, \hc\,\star_{\!\!\! M} \, \hc^{GD}\,\star_{\!\!\! M} \, \hc
  =\gc\,\star_{\!\!\! M} \, \hc,
\end{align}
\begin{equation}\label{ppp}
\aligned
 \gc^{GD}\,\star_{\!\!\! M} \, \hc^{GD}\,\star_{\!\!\! M} \, (\gc\,\star_{\!\!\! M} \, \hc)^{k+1}&= \gc^{GD}\,\star_{\!\!\! M} \, \hc^{GD}\,\star_{\!\!\! M} \, \hc^{k+1}\,\star_{\!\!\! M} \, \gc^{k+1} \\
 &  = \gc^{GD}\,\star_{\!\!\! M} \, \hc^{k}\,\star_{\!\!\! M} \, \gc^{k+1}\\
  &= \gc^{GD}\,\star_{\!\!\! M} \, \gc^{k+1}\,\star_{\!\!\! M} \, \hc^{k}   = \gc^k\,\star_{\!\!\! M} \, \hc^k\\
  &=(\gc\,\star_{\!\!\! M} \, \hc)^k,
\endaligned
\end{equation}
and
\begin{align}\label{lll}
 (\gc\,\star_{\!\!\! M} \, \hc)^{k+1}\,\star_{\!\!\! M} \, \gc^{GD}\,\star_{\!\!\! M} \, \hc^{GD}&=\hc^{k+1}\,\star_{\!\!\! M} \,
 \gc^{k+1}\,\star_{\!\!\! M} \, \gc^{GD}\,\star_{\!\!\! M} \, \hc^{GD}  =\hc^{k+1}\,\star_{\!\!\! M} \, \gc^{k}\,\star_{\!\!\! M} \, \hc^{GD}\nonumber\\
 &=\gc^{k}\,\star_{\!\!\! M} \, \hc^{k+1}\,\star_{\!\!\! M} \, \hc^{GD}
 =\gc^{k}\,\star_{\!\!\! M} \, \hc^{k}
 =(\gc\,\star_{\!\!\! M} \, \hc)^k.
\end{align}
From $({\ref{eee}})$, $({\ref{ppp}})$, and $({\ref{lll}})$, we get $(\gc\,\star_{\!\!\! M} \, \hc)^{GD}=\gc^{GD}\,\star_{\!\!\! M} \, \hc^{GD}$.
\end{proof}

\begin{theorem}\label{tt8}
Let $\gc, \hc\in\mathbb{C}^{\eta_1\times \eta_1\times \eta_3}$ and $\gc\,\star_{\!\!\! M} \, \hc=\hc\,\star_{\!\!\! M} \, \gc=\mathcal{O}$. Suppose $k=\max\{\ind(\gc), \ind(\hc)\}$. If
$$\gc^{GD}\,\star_{\!\!\! M} \, \hc=\hc\,\star_{\!\!\! M} \, \gc^{GD}=\mathcal{O}, \  \ \  \hc^{GD}\,\star_{\!\!\! M} \, \gc=\gc\,\star_{\!\!\! M} \, \hc^{GD}=\mathcal{O},$$
then $(\gc+\hc)^{GD}=\gc^{GD}+\hc^{GD}$.
\end{theorem}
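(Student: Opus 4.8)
The plan is to verify directly that $\xc:=\gc^{GD}+\hc^{GD}$ fulfils conditions (I)--(III) of Definition \ref{d07} for $\ac:=\gc+\hc$, with $k=\max\{\ind(\gc),\ind(\hc)\}$ playing the role of the index. Two bookkeeping facts do most of the work. From $\gc\,\star_{\!\!\! M} \, \hc=\hc\,\star_{\!\!\! M} \, \gc=\mathcal{O}$ a one-line induction gives $\ac^{n}=\gc^{n}+\hc^{n}$ for every $n\ge1$, since any product of $\gc$'s and $\hc$'s containing at least one factor of each collapses to $\mathcal{O}$; in particular $\ac^{k}=\gc^{k}+\hc^{k}$ and $\ac^{k+1}=\gc^{k+1}+\hc^{k+1}$. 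From $\gc^{GD}\,\star_{\!\!\! M} \, \hc=\hc\,\star_{\!\!\! M} \, \gc^{GD}=\mathcal{O}$ and $\hc^{GD}\,\star_{\!\!\! M} \, \gc=\gc\,\star_{\!\!\! M} \, \hc^{GD}=\mathcal{O}$ one likewise gets, for all $n\ge1$,
$$\gc^{GD}\,\star_{\!\!\! M} \, \hc^{n}=\hc^{n}\,\star_{\!\!\! M} \, \gc^{GD}=\mathcal{O},\qquad\hc^{GD}\,\star_{\!\!\! M} \, \gc^{n}=\gc^{n}\,\star_{\!\!\! M} \, \hc^{GD}=\mathcal{O}.$$
Finally, since $k\ge\ind(\gc)$, conditions (II) and (III) of the GD axioms of $\gc$, multiplied by a suitable power of $\gc$, give $\gc^{GD}\,\star_{\!\!\! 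M} \, \gc^{k+1}=\gc^{k}=\gc^{k+1}\,\star_{\!\!\! M} \, \gc^{GD}$, together with $\gc\,\star_{\!\!\! M} \, \gc^{GD}\,\star_{\!\!\! M} \, \gc=\gc$, and similarly for $\hc$.

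With these in hand the three conditions for $\ac$ are immediate. For (I), expanding gives $\ac\,\star_{\!\!\! M} \, \xc=\gc\,\star_{\!\!\! M} \, \gc^{GD}+\hc\,\star_{\!\!\! M} \, \hc^{GD}$ (the two cross terms $\gc\,\star_{\!\!\! M} \, \hc^{GD}$, $\hc\,\star_{\!\!\! M} \, \gc^{GD}$ vanish); multiplying on the right by $\ac$ and discarding $\gc\,\star_{\!\!\! M} \, \gc^{GD}\,\star_{\!\!\! M} \, \hc$ and $\hc\,\star_{\!\!\! M} \, \hc^{GD}\,\star_{\!\!\! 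M} \, \gc$ (zero because $\gc^{GD}\,\star_{\!\!\! M} \, \hc=\hc^{GD}\,\star_{\!\!\! M} \, \gc=\mathcal{O}$) leaves $\gc\,\star_{\!\!\! M} \, \gc^{GD}\,\star_{\!\!\! M} \, \gc+\hc\,\star_{\!\!\! M} \, \hc^{GD}\,\star_{\!\!\! M} \, \hc=\gc+\hc=\ac$. For (II), $\xc\,\star_{\!\!\! M} \, \ac^{k+1}=(\gc^{GD}+\hc^{GD})\,\star_{\!\!\! M} \, (\gc^{k+1}+\hc^{k+1})$; the mixed terms $\gc^{GD}\,\star_{\!\!\! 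M} \, \hc^{k+1}$ and $\hc^{GD}\,\star_{\!\!\! M} \, \gc^{k+1}$ vanish, so the sum reduces to $\gc^{GD}\,\star_{\!\!\! M} \, \gc^{k+1}+\hc^{GD}\,\star_{\!\!\! M} \, \hc^{k+1}=\gc^{k}+\hc^{k}=\ac^{k}$. Condition (III) is the mirror computation, using instead $\gc^{k+1}\,\star_{\!\!\! M} \, \hc^{GD}=\hc^{k+1}\,\star_{\!\!\! M} \, \gc^{GD}=\mathcal{O}$ and $\gc^{k+1}\,\star_{\!\!\! M} \, \gc^{GD}=\gc^{k}$, $\hc^{k+1}\,\star_{\!\!\! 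M} \, \hc^{GD}=\hc^{k}$, yielding $\ac^{k+1}\,\star_{\!\!\! M} \, \xc=\gc^{k}+\hc^{k}=\ac^{k}$.

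To conclude that these identities really place $\xc$ in $(\gc+\hc)\{GD\}$, one notes that the characterisation in Theorem \ref{caa} is unchanged if the exponent $\ind(\cdot)$ is replaced by any larger exponent $j$: the proof of that theorem uses $k=\ind$ only to guarantee $(N^{(i)})^{k}=O$ on the nilpotent blocks, which persists for $j\ge\ind$. Hence it is enough to know that $k=\max\{\ind(\gc),\ind(\hc)\}\ge\ind(\gc+\hc)$. Passing to frontal slices through $\matt$, this reduces to the matrix inequality $\ind(A+B)\le\max\{\ind(A),\ind(B)\}$ for $AB=BA=O$, which follows from a short null-space chase: if $(A+B)^{k+1}x=O$, then $A^{k}(A+B)^{k+1}x=A^{2k+1}x=O$ (the term $A^{k}B^{k+1}$ vanishes), so $A^{k}x=O$; symmetrically $B^{k}x=O$; hence $\ker(A+B)^{k+1}=\ker(A+B)^{k}$, i.e.\ $\ind(A+B)\le k$. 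Taking the maximum over the slices gives $\ind(\gc+\hc)\le k$, and therefore $\xc=\gc^{GD}+\hc^{GD}\in(\gc+\hc)\{GD\}$, which is the assertion $(\gc+\hc)^{GD}=\gc^{GD}+\hc^{GD}$.

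I do not expect a deep obstacle here; the core of the proof is simply keeping straight which of the several cross products vanish and invoking the right one of the four annihilation hypotheses. The only step needing genuine (if brief) care is the exponent argument in the last paragraph: one must observe both that $k$ dominates $\ind(\gc+\hc)$ and that Theorem \ref{caa} tolerates an over-large exponent, since otherwise the three identities verified at exponent $k$ would not literally coincide with the requirements of Definition \ref{d07}, whose statement is tied to the exact index of $\gc+\hc$.
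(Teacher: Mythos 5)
Your proof is correct and follows essentially the same route as the paper: expand $(\gc+\hc)\,\star_{\!\!\! M} \,(\gc^{GD}+\hc^{GD})\,\star_{\!\!\! M} \,(\gc+\hc)$, $(\gc^{GD}+\hc^{GD})\,\star_{\!\!\! M} \,(\gc+\hc)^{k+1}$ and $(\gc+\hc)^{k+1}\,\star_{\!\!\! M} \,(\gc^{GD}+\hc^{GD})$, kill the cross terms with the annihilation hypotheses, and use $(\gc+\hc)^n=\gc^n+\hc^n$. Your final paragraph — showing $\ind(\gc+\hc)\le k$ via the null-space chase on slices and noting that the characterisation of Theorem \ref{caa} tolerates an exponent larger than the exact index — is a point the paper silently skips over, and it is a genuine (and correctly executed) improvement rather than a deviation in method.
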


\begin{proof}
By $\gc\,\star_{\!\!\! M} \, \hc=\hc\,\star_{\!\!\! M} \, \gc=\mathcal {O}$, one has $(\gc+\hc)^n=\gc^{n}+\hc^{n}$ for arbitrary integer $n>0$.
Furthermore,
\begin{align}\label{iii}
  (\gc+\hc)\,\star_{\!\!\! M} \, (\gc^{GD}+\hc^{GD})\,\star_{\!\!\! M} \, (\gc+\hc)&=(\gc\,\star_{\!\!\! M} \, \gc^{GD}+
  \gc\,\star_{\!\!\! M} \, \hc^{GD}+\hc\,\star_{\!\!\! M} \, \gc^{GD}+\hc\,\star_{\!\!\! M} \, \hc^{GD})
  (\gc+\hc)\nonumber\\
  &=\gc\,\star_{\!\!\! M} \, \gc^{GD}\,\star_{\!\!\! M} \, \gc+\gc\,\star_{\!\!\! M} \, \hc^{GD}\,\star_{\!\!\! M} \, \gc+\hc\,\star_{\!\!\! M} \, \gc^{GD}\,\star_{\!\!\! M} \, \gc+\hc\,\star_{\!\!\! M} \, \hc^{GD}\,\star_{\!\!\! M} \, \gc\nonumber\\
  &+\gc\,\star_{\!\!\! M} \, \gc^{GD}\,\star_{\!\!\! M} \, \hc+\gc\,\star_{\!\!\! M} \, \hc^{GD}\,\star_{\!\!\! M} \, \hc+\hc\,\star_{\!\!\! M} \, \gc^{GD}\,\star_{\!\!\! M} \, \hc+\hc\,\star_{\!\!\! M} \, \hc^{GD}\,\star_{\!\!\! M} \, \hc\nonumber\\
  &=\gc\,\star_{\!\!\! M} \, \gc^{GD}\,\star_{\!\!\! M} \, \gc+\hc\,\star_{\!\!\! M} \, \hc^{GD}\,\star_{\!\!\! M} \, \hc\nonumber\\
  &=\gc+\hc,
\end{align}
and
\begin{align}\label{ooo}
 (\gc^{GD}+\hc^{GD})\,\star_{\!\!\! M} \, (\gc+\hc)^{k+1}&=(\gc^{GD}+\hc^{GD})\,\star_{\!\!\! M} \, (\gc^{k+1}+\hc^{k+1})\nonumber\\
 &=\gc^{GD}\,\star_{\!\!\! M} \, \gc^{k+1}+\gc^{GD}\,\star_{\!\!\! M} \, \hc^{k+1}+\hc^{GD}\,\star_{\!\!\! M} \, \gc^{k+1}+\hc^{GD}\,\star_{\!\!\! M} \, \hc^{k+1}\nonumber\\
 &=\gc^k+\gc^{GD}\,\star_{\!\!\! M} \, \hc\,\star_{\!\!\! M} \, \hc^k+\hc^{GD}\,\star_{\!\!\! M} \, \gc\,\star_{\!\!\! M} \, \gc^k+\hc^k\nonumber\\
 &=\gc^k+\hc^k\nonumber\\
 &=(\gc+\hc)^k,
\end{align}
 and
\begin{align}\label{uuu}
  (\gc+\hc)^{k+1}\,\star_{\!\!\! M} \, (\gc^{GD}+\hc^{GD})&=(\gc^{k+1}+\hc^{k+1})\,\star_{\!\!\! M} \, (\gc^{GD}+\hc^{GD})
 \nonumber\\
  &=\gc^{k+1}\,\star_{\!\!\! M} \, \gc^{GD}+\gc^{k+1}\,\star_{\!\!\! M} \, \hc^{GD}+\hc^{k+1}\,\star_{\!\!\! M} \, \gc^{GD}+\hc^{k+1}\,\star_{\!\!\! M} \, \hc^{GD}\nonumber\\
 &=\gc^k+\hc^k\nonumber\\
 &=(\gc+\hc)^k.
\end{align}
From $({\ref{iii}})$, $({\ref{ooo}})$, and $({\ref{uuu}})$, we get $(\gc+\hc)^{GD}=\gc^{GD}+\hc^{GD}$.
\end{proof}

\subsection{The GDMP inverse of a tensor}

The tensor GDMP inverse will be subject of investigation in this subsection. Before that, it is necessary to define the MP inverse in a tensor space.

\begin{definition}
Let $\gc \in \CC^{\eta_1\times \eta_2\times \eta_3}$ be fixed.
If there exists $\xc \in \CC^{\eta_2\times \eta_1\times \eta_3}$ satisfying
\begin{equation}\label{mpkk0}
\textnormal{(I)} \ \gc\,\star_{\!\!\! M} \, \xc\,\star_{\!\!\! M} \, \gc = \gc, \quad \textnormal{(II)} \ \xc\,\star_{\!\!\! M} \, \gc\,\star_{\!\!\! M} \, \xc = \xc, \quad \textnormal{(III)} \ (\gc\,\star_{\!\!\! M} \, \xc)^* = \gc\,\star_{\!\!\! M} \, \xc, \quad \textnormal{(IV)} \ (\xc\,\star_{\!\!\! M} \, \gc)^* = \xc\,\star_{\!\!\! M} \, \gc,
\end{equation}
then $\xc$ represents the {\em Moore-Penrose (MP) inverse} of $\gc$ and it is denoted by $\gc^\dag$.
\end{definition}

\begin{definition}\label{dd3.061}
Let $\gc \in\mathbb{C}^{\eta_1\times \eta_1\times \eta_3}$ and $\ind(\gc)=k$. Suppose $\gc^{GD}\in\gc\{GD\}$. Then, $\gc^{GD,\dag}=\gc^{GD}\,\star_{\!\!\! M} \, \gc\,\star_{\!\!\! M} \, \gc^\dag$ is call a {\em GDMP inverse} of $\gc$.
\end{definition}

Notice that the GD inverse is not unique and so is the GDMP inverse. We use $\gc\{GD, \dag\}=\{\gc^{GD, \dag}|\gc \in\mathbb{C}^{\eta_1\times \eta_1\times \eta_3}\}$ to denote the set of tensor GDMP inverses.

Next, an algorithm for computing the GDMP inverse is established.

~

\begin{algorithm}[H]
  \caption{Computing the GDMP inverse under the M-product}	
\KwIn{$\gc\in\mathbb{C}^{\eta_1\times \eta_1\times \eta_3}$ and $M \in\mathbb{C}^{\eta_3\times \eta_3}$}
\KwOut {$\xc=\gc^{GD,\dag}$}

\begin{enumerate}\label{ag}

\item Compute $\widehat{\gc}=\gc\times_3M$
\item $k=\ind(\widehat{\gc})$
\item $\textbf{For}$ $i=1:\eta_3$ \textbf{do}
\item ~~~~~~$(\widehat{\pc})^{(i)}=(\widehat{\gc^{GD}})^{(i)}(\widehat{\gc})^{(i)}(\widehat{\gc^\dag})^{(i)}$
\item \textbf{End for}
\item Compute $\xc=\widehat{\pc}\times_3M^{-1}$
\item \textbf{Return} $\gc^{GD,\dag}=\xc$.
\end{enumerate}

\end{algorithm}

\begin{example}
Let $\gc\in\mathbb{C}^{3\times3\times3}$ and $M\in\mathbb{C}^{3\times3}$  with entries
$$(\mathcal A)^{(1)}=
\begin{bmatrix}
3&3&5\\
4&4&7\\
2&3&4
\end{bmatrix},~
(\mathcal A)^{(2)}=
\begin{bmatrix}
2&2&4\\
3&3&4\\
2&2&3
\end{bmatrix},~
(\mathcal A)^{(3)}=
\begin{bmatrix}
0&2&4\\
1&1&4\\
0&2&3
\end{bmatrix},~
M=
\begin{bmatrix}
1&0&-1\\
0&1&0\\
0&1&-1
\end{bmatrix}.
$$
We evaluate the  index of $\gc$ is $k=2$, since $\ind((\widehat{\gc})^{(1)})=\ind((\widehat{\gc})^{(2)})=\ind((\widehat{\gc})^{(3)})=2$.
By Algorithm \ref{ag}, we calculate $\xc=\gc^{GD}\,\star_{\!\!\! M} \, \gc\,\star_{\!\!\! M} \, \gc^\dag$, where
$$
(\mathcal X)^{(1)}=
\begin{bmatrix}
0.3811&0.2619&-0.524\\
-1.3824&-0.6899&2.8104\\
1.7866&-1.5005&0.5711
\end{bmatrix},$$
$$(\mathcal X)^{(2)}=
\begin{bmatrix}
-5.1428&3.7144&0.5712\\
4.0952&-2.8096&-0.3808\\
0.7619&-0.4762&-0.0476
\end{bmatrix},~
(\mathcal X)^{(3)}=
\begin{bmatrix}
-7.119&4.262&2.476\\
3.119&-3.262&1.524\\
-1.2143&0.0714&1.8572
\end{bmatrix}.
$$
$\Box$
\end{example}

\begin{theorem}\label{mmk}
If $\gc \in\mathbb{C}^{\eta_1\times \eta_1\times \eta_3}$ is of the index $\ind(\gc)=k$, then the system of tensor equations
\begin{equation}\label{mpkk12}
\textnormal{(I)} \ \xc\,\star_{\!\!\! M} \, \gc\,\star_{\!\!\! M} \, \xc = \xc, \quad \textnormal{(II)} \ \gc\,\star_{\!\!\! M} \, \xc=\gc\,\star_{\!\!\! M} \, \gc^\dag, \quad \textnormal{(III)} \ \xc\,\star_{\!\!\! M} \, \gc^{k}=\gc^{GD}\,\star_{\!\!\! M} \, \gc^{k},
\end{equation}
possesses the solution $\xc=\gc^{GD}\,\star_{\!\!\! M} \, \gc\,\star_{\!\!\! M} \, \gc^\dag$.
\end{theorem}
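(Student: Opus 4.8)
The plan is to verify directly that the tensor $\xc=\gc^{GD}\,\star_{\!\!\! M} \, \gc\,\star_{\!\!\! M} \, \gc^\dag$ satisfies each of the three equations in \eqref{mpkk12}, using only the defining identities of the GD inverse (Definition \ref{d07}), the defining identities of the MP inverse \eqref{mpkk0}, and associativity of the M-product. There is no need to invoke the core-nilpotent decomposition here, since $\xc$ is a plain composition of already-constructed inverses and the three identities follow by direct substitution and regrouping.

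For equation (II) I would substitute and regroup as
$$\gc\,\star_{\!\!\! M} \, \xc=(\gc\,\star_{\!\!\! M} \, \gc^{GD}\,\star_{\!\!\! M} \, \gc)\,\star_{\!\!\! M} \, \gc^\dag=\gc\,\star_{\!\!\! M} \, \gc^\dag,$$
the last step being identity (I) of Definition \ref{d07}. For equation (III) I would write $\xc\,\star_{\!\!\! M} \, \gc^{k}=\gc^{GD}\,\star_{\!\!\! M} \,(\gc\,\star_{\!\!\! M} \, \gc^\dag\,\star_{\!\!\! M} \, \gc^{k})$ and, since $k\ge 1$, peel off one factor of $\gc$ and invoke $\gc\,\star_{\!\!\! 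M} \, \gc^\dag\,\star_{\!\!\! M} \, \gc=\gc$ (identity (I) of \eqref{mpkk0}) to get $\gc\,\star_{\!\!\! M} \, \gc^\dag\,\star_{\!\!\! M} \, \gc^{k}=\gc^{k}$, hence $\xc\,\star_{\!\!\! M} \, \gc^{k}=\gc^{GD}\,\star_{\!\!\! M} \, \gc^{k}$, which is exactly (III).

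Equation (I) I would treat last, reusing the reductions above: by (II), $\xc\,\star_{\!\!\! M} \, \gc\,\star_{\!\!\! M} \, \xc=\xc\,\star_{\!\!\! M} \,(\gc\,\star_{\!\!\! M} \, \xc)=\xc\,\star_{\!\!\! M} \, \gc\,\star_{\!\!\! M} \, \gc^\dag$, and then
$$\xc\,\star_{\!\!\! M} \, \gc\,\star_{\!\!\! M} \, \gc^\dag=\gc^{GD}\,\star_{\!\!\! M} \, \gc\,\star_{\!\!\! M} \,(\gc^\dag\,\star_{\!\!\! M} \, \gc\,\star_{\!\!\! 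M} \, \gc^\dag)=\gc^{GD}\,\star_{\!\!\! M} \, \gc\,\star_{\!\!\! M} \, \gc^\dag=\xc,$$
using identity (II) of \eqref{mpkk0} in the middle. I expect no real obstacle: the argument is pure bookkeeping with the M-product identities, and the only point requiring a word of care is that the reduction in (III) uses $k\ge 1$ — the case $k=0$ (i.e.\ $\gc$ invertible, so that $\gc^{GD}=\gc^\dag=\gc^{-1}$ and $\xc=\gc^{-1}$) being immediate.
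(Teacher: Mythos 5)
Your proposal is correct and matches the paper's own argument: both verify the three equations by direct substitution of $\xc=\gc^{GD}\,\star_{\!\!\! M} \, \gc\,\star_{\!\!\! M} \, \gc^\dag$ and repeated use of $\gc\,\star_{\!\!\! M} \, \gc^{GD}\,\star_{\!\!\! M} \, \gc=\gc$ and the MP identities, the only cosmetic difference being that you collapse (I) via $\gc^\dag\,\star_{\!\!\! M} \, \gc\,\star_{\!\!\! M} \, \gc^\dag=\gc^\dag$ while the paper uses $\gc\,\star_{\!\!\! M} \, \gc^\dag\,\star_{\!\!\! M} \, \gc=\gc$ followed by the GD identity. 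Your explicit remark that the reduction of (III) needs $k\ge 1$ (with the invertible case handled separately) is a small point of care the paper omits.
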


\begin{proof}
Suppose $\xc=\gc^{GD}\,\star_{\!\!\! M} \, \gc\,\star_{\!\!\! M} \, \gc^\dag$. Then
\begin{align*}
\xc\,\star_{\!\!\! M} \, \gc\,\star_{\!\!\! M} \, \xc&=\gc^{GD}\,\star_{\!\!\! M} \, \gc\,\star_{\!\!\! M} \, \gc^\dag\,\star_{\!\!\! M} \, \gc\,\star_{\!\!\! M} \, \gc^{GD}\,\star_{\!\!\! M} \, \gc\,\star_{\!\!\! M} \, \gc^\dag
=\gc^{GD}\,\star_{\!\!\! M} \, \gc\,\star_{\!\!\! M} \, \gc^{GD}\,\star_{\!\!\! M} \, \gc\,\star_{\!\!\! M} \, \gc^\dag\\
&=\gc^{GD}\,\star_{\!\!\! M} \, \gc\,\star_{\!\!\! M} \, \gc^\dag
=\xc,
\end{align*}
$$\gc\,\star_{\!\!\! M} \, \xc=\gc\,\star_{\!\!\! M} \, \gc^{GD}\,\star_{\!\!\! M} \, \gc\,\star_{\!\!\! M} \, \gc^\dag=\gc\,\star_{\!\!\! M} \, \gc^\dag,$$
and
\begin{align*}
  \xc\,\star_{\!\!\! M} \, \gc^k&=\gc^{GD}\,\star_{\!\!\! M} \, \gc\,\star_{\!\!\! M} \, \gc^\dag\,\star_{\!\!\! M} \, \gc^k
  =\gc^{GD}\,\star_{\!\!\! M} \, \gc\,\star_{\!\!\! M} \, \gc^\dag\,\star_{\!\!\! M} \, \gc\,\star_{\!\!\! M} \, \gc^{k-1}
=\gc^{GD}\,\star_{\!\!\! M} \, \gc\,\star_{\!\!\! M} \, \gc^{k-1}
=\gc^{GD}\,\star_{\!\!\! M} \, \gc^K.
\end{align*}
The proof is completed.
\end{proof}

\begin{theorem}
Let $\gc \in\mathbb{C}^{\eta_1\times \eta_1\times \eta_3}$ satisfy $\ind(\gc)=k$. In this case, the system
\begin{equation}\label{cca}
\textnormal{(I)} \ \gc\,\star_{\!\!\! M} \, \xc\,\star_{\!\!\! M} \, \gc = \gc, \quad  \textnormal{(II)} \ \xc\,\star_{\!\!\! M} \, \gc\,\star_{\!\!\! M} \, \xc = \xc, \quad \textnormal{(III)} \ (\gc\,\star_{\!\!\! M} \, \xc)^* =\gc\,\star_{\!\!\! M} \, \xc,  \quad  \textnormal{(IV)} \ \xc\,\star_{\!\!\! M} \, \gc^{k+1} =\gc^k,
\end{equation}
possesses the solution $\xc=\gc^{GD}\,\star_{\!\!\! M} \, \gc\,\star_{\!\!\! M} \, \gc^\dag$.
\end{theorem}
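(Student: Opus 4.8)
The plan is to verify directly that the tensor $\xc=\gc^{GD}\,\star_{\!\!\! M} \, \gc\,\star_{\!\!\! M} \, \gc^\dag$ — which is precisely the GDMP inverse of Definition \ref{dd3.061} — satisfies the four equations (I)--(IV). A pleasant feature is that half the work is already done: Theorem \ref{mmk} shows that this very $\xc$ fulfils $\xc\,\star_{\!\!\! M} \, \gc\,\star_{\!\!\! M} \, \xc=\xc$ and $\gc\,\star_{\!\!\! M} \, \xc=\gc\,\star_{\!\!\! M} \, \gc^\dag$. The first of these is exactly equation (II), so nothing new is needed there. The second disposes of (III): since $\gc\,\star_{\!\!\! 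M} \, \xc=\gc\,\star_{\!\!\! M} \, \gc^\dag$ and the Penrose equation (III) for $\gc^\dag$ gives $(\gc\,\star_{\!\!\! M} \, \gc^\dag)^*=\gc\,\star_{\!\!\! M} \, \gc^\dag$, we conclude at once $(\gc\,\star_{\!\!\! M} \, \xc)^*=\gc\,\star_{\!\!\! M} \, \xc$.

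For (I), I would substitute $\xc$ and peel off the inner $\gc^\dag$ using the $\{1\}$-inverse property of the MP inverse together with associativity of $\,\star_{\!\!\! M} \, $:
\begin{equation*}
\gc\,\star_{\!\!\! M} \, \xc\,\star_{\!\!\! M} \, \gc=\gc\,\star_{\!\!\! M} \, \gc^{GD}\,\star_{\!\!\! M} \, (\gc\,\star_{\!\!\! M} \, \gc^\dag\,\star_{\!\!\! M} \, \gc)=\gc\,\star_{\!\!\! M} \, \gc^{GD}\,\star_{\!\!\! M} \, \gc=\gc,
\end{equation*}
the last equality being condition (I) in Definition \ref{d07} for the GD inverse.

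The only identity requiring a small observation is (IV). Here I would first record that $\gc\,\star_{\!\!\! M} \, \gc^\dag\,\star_{\!\!\! M} \, \gc^{k+1}=\gc^{k+1}$, obtained by writing $\gc^{k+1}=\gc\,\star_{\!\!\! M} \, \gc^k$ and using $\gc\,\star_{\!\!\! M} \, \gc^\dag\,\star_{\!\!\! M} \, \gc=\gc$. Then
\begin{equation*}
\xc\,\star_{\!\!\! M} \, \gc^{k+1}=\gc^{GD}\,\star_{\!\!\! M} \, (\gc\,\star_{\!\!\! M} \, \gc^\dag\,\star_{\!\!\! M} \, \gc^{k+1})=\gc^{GD}\,\star_{\!\!\! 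M} \, \gc^{k+1}=\gc^k,
\end{equation*}
where the last step is condition (II) of Definition \ref{d07}. This completes the verification that $\xc$ solves the system.

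I do not anticipate a genuine obstacle: every step reduces to associativity of $\,\star_{\!\!\! M} \, $ plus the defining identities of $\gc^{GD}$ and $\gc^\dag$, so the argument is essentially bookkeeping. The one point worth flagging is that neither $\gc^{GD}$ nor $\gc^\dag$ is unique, so the statement must be read as ``this particular $\xc$ is a solution'' rather than as a uniqueness claim. If a cleaner exposition were preferred, one could instead pass to $\matt(\gc)$ and the slice-wise block core-nilpotent form used in Theorem \ref{caa}, where each of (I)--(IV) becomes an immediate $2\times2$ block computation on $(\widehat{\gc})^{(i)}$.
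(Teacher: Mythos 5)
Your proposal is correct and follows essentially the same route as the paper: it reduces (II) and (III) to Theorem \ref{mmk} and then verifies (I) and (IV) by direct substitution using the defining identities of $\gc^\dag$ and $\gc^{GD}$. The only cosmetic difference is how the factors are grouped in the computation of (IV), which does not change the argument.
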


\begin{proof}
Suppose $\xc=\gc^{GD}\,\star_{\!\!\! M} \, \gc\,\star_{\!\!\! M} \, \gc^\dag$.
By Theorem \ref{mmk}, it is enough to verify\\
$\textnormal{(I)} \ \gc\,\star_{\!\!\! M} \, \xc\,\star_{\!\!\! M} \, \gc = \gc,  \quad \textnormal{(III)} \ (\gc\,\star_{\!\!\! M} \, \xc)^* =\gc\,\star_{\!\!\! M} \, \xc,  \quad  \textnormal{(IV)} \ \xc\,\star_{\!\!\! M} \, \gc^{k+1} =\gc^k$. \\
The following identities are satisfied
$$\gc\,\star_{\!\!\! M} \, \gc^{GD}\,\star_{\!\!\! M} \, \gc\,\star_{\!\!\! M} \, \gc^\dag\,\star_{\!\!\! M} \, \gc=\gc,\ \ \
\gc\,\star_{\!\!\! M} \, \xc=\gc\,\star_{\!\!\! M} \, \gc^{GD}\,\star_{\!\!\! M} \, \gc\,\star_{\!\!\! M} \, \gc^\dag=\gc\,\star_{\!\!\! M} \, \gc^\dag=(\gc\,\star_{\!\!\! M} \, \xc)^*,$$
as well as
$$\xc\,\star_{\!\!\! M} \, \gc^{k+1}=\gc^{GD}\,\star_{\!\!\! M} \, \gc\,\star_{\!\!\! M} \, \gc^\dag\,\star_{\!\!\! M} \, \gc^{k+1}=
\gc^{GD}\,\star_{\!\!\! M} \, \gc\,\star_{\!\!\! M} \, \gc^k=\gc^{GD}\,\star_{\!\!\! M} \, \gc^{k+1}
=\gc^k.$$
\end{proof}

Some properties of the GDMP inverse are established in Theorem \ref{Thm11TM}.
\begin{theorem}\label{Thm11TM}
Let $\gc \in\mathbb{C}^{\eta_1\times \eta_1\times \eta_3}$ fulfill $\ind(\gc)=k$ and assume $\gc^{GD}\in\gc\{GD\}$. Then,
\begin{description}
  \item  [(a)] $\gc^{GD,\dag}=\gc^{GD}\,\star_{\!\!\! M} \, \gc\,\star_{\!\!\! M} \, \gc^{GD,\dag}$.
  \item  [(b)] $\gc^{GD,\dag}\in\gc\{1,2\}.$
  \item  [(c)] $\gc^c\,\star_{\!\!\! M} \, \gc^{GD,\dag}=\gc^c\,\star_{\!\!\! M} \, \gc^\dag$ and $\gc^{GD,\dag}\,\star_{\!\!\! M} \, \gc^c=\gc^{GD}\,\star_{\!\!\! M} \, \gc^c$, where $c$ is a positive integer.
  \item  [(d)] $\gc^{GD,\dag}\,\star_{\!\!\! M} \, \gc^{k+1}=\gc^k$.
  \item  [(e)] $\gc\,\star_{\!\!\! M} \, \gc^{GD,\dag}\,\star_{\!\!\! M} \, \gc^{k+1}=\gc^{k+1}$.
  \item  [(f)] $\gc^{k+1}\,\star_{\!\!\! M} \, \gc^{GD,\dag}=\gc^{k}\,\star_{\!\!\! M} \, \gc\,\star_{\!\!\! M} \, \gc^\dag$.
  \item  [(g)] $\gc^{GD,\dag}\,\star_{\!\!\! M} \, \gc^{k+2}\,\star_{\!\!\! M} \, \gc^{GD,\dag}=\gc^{k+1}\,\star_{\!\!\! M} \, \gc^\dag$.
\end{description}
\end{theorem}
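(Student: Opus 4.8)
The plan is to verify each identity (a)--(g) by writing $\gc^{GD,\dag}=\gc^{GD}\,\star_{\!\!\! M} \, \gc\,\star_{\!\!\! M} \, \gc^\dag$ and repeatedly simplifying using three facts: the GD axioms (I)--(III) of Definition \ref{d07}, the MP axioms (I)--(IV) of \eqref{mpkk0}, and the already-established Theorem \ref{mmk}, whose identities $\xc\,\star_{\!\!\! M} \, \gc\,\star_{\!\!\! M} \, \xc=\xc$, $\gc\,\star_{\!\!\! M} \, \xc=\gc\,\star_{\!\!\! M} \, \gc^\dag$ and $\xc\,\star_{\!\!\! M} \, \gc^{k}=\gc^{GD}\,\star_{\!\!\! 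M} \, \gc^{k}$ (with $\xc=\gc^{GD,\dag}$) do most of the heavy lifting. For (a), substitute $\gc^{GD,\dag}=\gc^{GD}\,\star_{\!\!\! M} \, \gc\,\star_{\!\!\! M} \, \gc^\dag$ into the right-hand side and collapse $\gc^{GD}\,\star_{\!\!\! M} \, \gc\,\star_{\!\!\! M} \, \gc^{GD}\to\gc^{GD}$ using GD axiom (I) in the form $\gc\,\star_{\!\!\! M} \, \gc^{GD}\,\star_{\!\!\! M} \, \gc=\gc$ (equivalently, exploit that $\gc^{GD}$ is a $\{1\}$-inverse and that $\gc^{GD}\,\star_{\!\!\! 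M} \, \gc$ is idempotent, which follows from axiom (I)). For (b), the identity $\gc\,\star_{\!\!\! M} \, \gc^{GD,\dag}\,\star_{\!\!\! M} \, \gc=\gc$ is immediate from $\gc\,\star_{\!\!\! M} \, \gc^{GD,\dag}=\gc\,\star_{\!\!\! M} \, \gc^\dag$ (Theorem \ref{mmk}(II)) and MP axiom (I), while $\gc^{GD,\dag}\,\star_{\!\!\! M} \, \gc\,\star_{\!\!\! M} \, \gc^{GD,\dag}=\gc^{GD,\dag}$ is exactly Theorem \ref{mmk}(I).

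For (c), handle the two statements separately. The identity $\gc^c\,\star_{\!\!\! M} \, \gc^{GD,\dag}=\gc^c\,\star_{\!\!\! M} \, \gc^\dag$ follows by writing $\gc^c=\gc^{c-1}\,\star_{\!\!\! M} \, \gc$ and then $\gc\,\star_{\!\!\! M} \, \gc^{GD,\dag}=\gc\,\star_{\!\!\! M} \, \gc^\dag$; to finish one needs $\gc^{c-1}\,\star_{\!\!\! M} \, \gc\,\star_{\!\!\! M} \, \gc^\dag$ unchanged, which is fine since nothing further is claimed. Symmetrically, $\gc^{GD,\dag}\,\star_{\!\!\! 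M} \, \gc^c=\gc^{GD}\,\star_{\!\!\! M} \, \gc\,\star_{\!\!\! M} \, \gc^\dag\,\star_{\!\!\! M} \, \gc\,\star_{\!\!\! M} \, \gc^{c-1}=\gc^{GD}\,\star_{\!\!\! M} \, \gc\,\star_{\!\!\! M} \, \gc^{c-1}=\gc^{GD}\,\star_{\!\!\! M} \, \gc^c$, using $\gc^\dag\,\star_{\!\!\! M} \, \gc\,\star_{\!\!\! M} \, \gc^{c-1}=\gc^{c-1}$ when $c\ge1$ (here one should be slightly careful: this uses $\gc^\dag\,\star_{\!\!\! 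M} \, \gc\,\star_{\!\!\! M} \, \gc=\gc$, i.e. MP axiom (I), so it is valid only when the trailing power is at least one, which is covered by $c$ positive). Items (d) and (e) are essentially Theorem \ref{mmk}(III) composed with one more factor of $\gc$: $\gc^{GD,\dag}\,\star_{\!\!\! M} \, \gc^{k+1}=(\gc^{GD,\dag}\,\star_{\!\!\! M} \, \gc^{k})\,\star_{\!\!\! M} \, \gc=\gc^{GD}\,\star_{\!\!\! M} \, \gc^{k}\,\star_{\!\!\! M} \, \gc=\gc^{GD}\,\star_{\!\!\! M} \, \gc^{k+1}=\gc^k$ by GD axiom (III), and then (e) is just $\gc$ applied on the left. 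For (f), $\gc^{k+1}\,\star_{\!\!\! M} \, \gc^{GD,\dag}=\gc^{k}\,\star_{\!\!\! M} \, \gc\,\star_{\!\!\! M} \, \gc^{GD}\,\star_{\!\!\! M} \, \gc\,\star_{\!\!\! M} \, \gc^\dag=\gc^{k}\,\star_{\!\!\! M} \, \gc\,\star_{\!\!\! M} \, \gc^\dag$ using $\gc^k\,\star_{\!\!\! M} \, \gc\,\star_{\!\!\! M} \, \gc^{GD}=\gc^{k+1}\,\star_{\!\!\! M} \, \gc^{GD}=\gc^k$. Finally (g) chains (d) and (f): $\gc^{GD,\dag}\,\star_{\!\!\! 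M} \, \gc^{k+2}\,\star_{\!\!\! M} \, \gc^{GD,\dag}=(\gc^{GD,\dag}\,\star_{\!\!\! M} \, \gc^{k+1})\,\star_{\!\!\! M} \, (\gc\,\star_{\!\!\! M} \, \gc^{GD,\dag})=\gc^k\,\star_{\!\!\! M} \, \gc\,\star_{\!\!\! M} \, \gc^\dag=\gc^{k+1}\,\star_{\!\!\! M} \, \gc^\dag$, invoking (d) for the first grouping and $\gc\,\star_{\!\!\! M} \, \gc^{GD,\dag}=\gc\,\star_{\!\!\! M} \, \gc^\dag$ for the second.

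The computations are all routine once the bookkeeping is set up; the only genuine care needed is in (c), where one must be explicit that the reductions $\gc^\dag\,\star_{\!\!\! M} \, \gc\,\star_{\!\!\! M} \, \gc=\gc$ and $\gc\,\star_{\!\!\! M} \, \gc^{GD}\,\star_{\!\!\! M} \, \gc=\gc$ require a positive power of $\gc$ to be present to absorb the idempotent $\gc\,\star_{\!\!\! M} \, \gc^\dag$ (resp. $\gc^{GD}\,\star_{\!\!\! M} \, \gc$), so the hypothesis that $c$ is a positive integer is used essentially. I expect no conceptual obstacle; the main risk is simply sign/order slips in the associative rearrangements, which are controlled by Lemma \ref{ppaa} and part (c) of the first Lemma (associativity of $\,\star_{\!\!\! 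M} \,$). An alternative, uniform route would be to pass to the frontal-slice (hat) picture via $\matt(\cdot)$, reduce every identity to the corresponding matrix identity for $(\widehat{\gc})^{(i)}$, and cite the known matrix GDMP-inverse properties slicewise; I would mention this as a remark but carry out the direct tensor-algebra proof for self-containedness.
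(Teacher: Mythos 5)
Your proposal is correct and follows essentially the same route as the paper: everything is verified directly from the representation $\gc^{GD,\dag}=\gc^{GD}\,\star_{\!\!\! M} \,\gc\,\star_{\!\!\! M} \,\gc^\dag$ using the GD axioms, the Moore--Penrose axioms and Theorem~\ref{mmk}. One caution on item (a): the first reduction you propose, $\gc^{GD}\,\star_{\!\!\! M} \,\gc\,\star_{\!\!\! M} \,\gc^{GD}\to\gc^{GD}$, is not available, because a GD inverse need not be a $\{2\}$-inverse (in the canonical form of Theorem~\ref{caa} its nilpotent block is only a $\{1\}$-inverse $(N^{(i)})^{-}$, and $(N^{(i)})^{-}N^{(i)}(N^{(i)})^{-}\neq (N^{(i)})^{-}$ in general); your parenthetical alternative --- the idempotency of $\gc^{GD}\,\star_{\!\!\! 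M} \,\gc$, which does follow from axiom (I) --- is the valid argument and is in substance what the paper does after first invoking Theorem~\ref{mmk}(I). A trivial slip: the identity $\gc^{GD}\,\star_{\!\!\! M} \,\gc^{k+1}=\gc^k$ you use in (d) is axiom (II), not (III), of Definition~\ref{d07}; this does not affect the argument.
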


\begin{proof}  \textbf{(a)}
$\gc^{GD,\dag}=\gc^{GD,\dag}\,\star_{\!\!\! M} \, \gc\,\star_{\!\!\! M} \, \gc^{GD,\dag}
    =\gc^{GD}\,\star_{\!\!\! M} \, \gc\,\star_{\!\!\! M} \, \gc^\dag\,\star_{\!\!\! M} \, \gc\,\star_{\!\!\! M} \, \gc^{GD,\dag}=\gc^{GD}\,\star_{\!\!\! M} \, \gc\,\star_{\!\!\! M} \, \gc^{GD,\dag}$.\\
\textbf{(b)} Based on Theorem \ref{mmk}, it suffices to prove $\gc\,\star_{\!\!\! M} \, \gc^{GD,\dag}\,\star_{\!\!\! M} \, \gc=\gc$.
Since
$$\gc\,\star_{\!\!\! M} \, \gc^{GD,\dag}\,\star_{\!\!\! M} \, \gc=\gc\,\star_{\!\!\! M} \, \gc^{GD}\,\star_{\!\!\! M} \, \gc\,\star_{\!\!\! M} \, \gc^\dag\,\star_{\!\!\! M} \, \gc=\gc,$$
we have $\gc^{GD,\dag}\in\gc\{1,2\}$.\\
\textbf{(c)}
Let $c$ is a positive integer. Then,
\begin{eqnarray*}
  \gc^c\,\star_{\!\!\! M} \, \gc^{GD,\dag} &=& \gc^c\,\star_{\!\!\! M} \, \gc^{GD}\,\star_{\!\!\! M} \, \gc\,\star_{\!\!\! M} \, \gc^\dag
=\gc^{c-1}\,\star_{\!\!\! M} \, (\gc\,\star_{\!\!\! M} \, \gc^{GD}\,\star_{\!\!\! M} \, \gc)\,\star_{\!\!\! M} \, \gc^\dag \\
   &=& \gc^{c-1}\,\star_{\!\!\! M} \, \gc\,\star_{\!\!\! M} \, \gc^\dag=\gc^{c}\,\star_{\!\!\! M} \, \gc^\dag.
\end{eqnarray*}
Similarly, we can prove $\gc^{GD,\dag}\,\star_{\!\!\! M} \, \gc^c=\gc^{GD}\,\star_{\!\!\! M} \, \gc^c$.

\noindent \textbf{(d)} Using $\gc^{GD,\dag}\,\star_{\!\!\! M} \, \gc^c=\gc^{GD}\,\star_{\!\!\! M} \, \gc^c$, it can be concluded
$$\gc^{GD,\dag}\,\star_{\!\!\! M} \, \gc^{k+1}=\gc^{GD,\dag}\,\star_{\!\!\! M} \, \gc^k\,\star_{\!\!\! M} \, \gc=\gc^{GD}\,\star_{\!\!\! M} \, \gc^k\,\star_{\!\!\! M} \, \gc=\gc^{GD}\,\star_{\!\!\! M} \, \gc^{k+1}=\gc^k.$$
\textbf{(e)} Since $\gc^{GD,\dag}\,\star_{\!\!\! M} \, \gc=\gc^{GD}\,\star_{\!\!\! M} \, \gc$ and $\gc^{GD}\,\star_{\!\!\! M} \, \gc^{k+1}=\gc^k$, one has
\begin{eqnarray*}
  \gc\,\star_{\!\!\! M} \, \gc^{GD,\dag}\,\star_{\!\!\! M} \, \gc^{k+1} &=& \gc\,\star_{\!\!\! M} \, \gc^{GD,\dag}\,\star_{\!\!\! M} \, \gc\,\star_{\!\!\! M} \, \gc^k=
  \gc\,\star_{\!\!\! M} \, \gc^{GD}\,\star_{\!\!\! M} \, \gc\,\star_{\!\!\! M} \, \gc^k \\
   &=& \gc\,\star_{\!\!\! M} \, \gc^{GD}\,\star_{\!\!\! M} \, \gc^{k+1}=\gc\,\star_{\!\!\! M} \, \gc^k=\gc^{k+1}.
\end{eqnarray*}
\textbf{(f)} It is easy to see
$$\gc^{k+1}\,\star_{\!\!\! M} \, \gc^{GD,\dag}=\gc^k\,\star_{\!\!\! M} \, \gc\,\star_{\!\!\! M} \, \gc^{GD,\dag}=\gc^k\,\star_{\!\!\! M} \, (\gc\,\star_{\!\!\! M} \, \gc^{GD,\dag})=\gc^k\,\star_{\!\!\! M} \, \gc\,\star_{\!\!\! M} \, \gc^\dag.$$
\textbf{(g)} By some computation, it can be derived
$$\gc^{GD,\dag}\,\star_{\!\!\! M} \, \gc^{k+2}\,\star_{\!\!\! M} \, \gc^{GD,\dag}=\gc^{GD,\dag}\,\star_{\!\!\! M} \, \gc^{k+1}\,\star_{\!\!\! M} \, \gc\,\star_{\!\!\! M} \, \gc^{GD,\dag}
=\gc^k\,\star_{\!\!\! M} \, \gc\,\star_{\!\!\! M} \, \gc^\dag=\gc^{k+1}\,\star_{\!\!\! M} \, \gc^\dag,$$
which finishes the proof.
\end{proof}

The subsequent results are aimed to the reverse- and forward-order law for the GDMP inverse.

\begin{theorem}
Let $\gc, \hc\in\mathbb{C}^{\eta_1\times \eta_1\times \eta_3}$ satisfy the constraint $\gc^\dag=\gc$, $\hc^\dag=\hc$.
Let us observe $k=\max\{\ind(\gc), \ind(\hc)\}$.
If
$$\gc\,\star_{\!\!\! M} \, \hc=\hc\,\star_{\!\!\! M} \, \gc, \ \ \ \gc^{GD}\,\star_{\!\!\! M} \, \gc\,\star_{\!\!\! M} \, \hc=\hc\,\star_{\!\!\! M} \, \gc^{GD}\,\star_{\!\!\! M} \, \gc,\ \ \
 \hc\,\star_{\!\!\! M} \, \hc^{GD}\,\star_{\!\!\! M} \, \gc^{GD}=\gc^{GD}\,\star_{\!\!\! M} \, \hc\,\star_{\!\!\! M} \, \hc^{GD},$$
then $(\gc\,\star_{\!\!\! M} \, \hc)^{GD,\dag}=\hc^{GD,\dag}\,\star_{\!\!\! M} \, \gc^{GD,\dag}$.
\end{theorem}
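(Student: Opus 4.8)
The plan is to reduce the statement to the reverse-order law for the GD inverse already established in Theorem~\ref{T6}, exploiting the rigid structure that $\gc^\dag=\gc$ and $\hc^\dag=\hc$ impose. First I would unpack these two hypotheses. Feeding $\gc^\dag=\gc$ into the Moore--Penrose equations $\gc\,\star_{\!\!\! M} \,\gc^\dag\,\star_{\!\!\! M} \,\gc=\gc$ and $(\gc\,\star_{\!\!\! M} \,\gc^\dag)^*=\gc\,\star_{\!\!\! M} \,\gc^\dag$ gives $\gc^3=\gc$ and $(\gc^2)^*=\gc^2$, and likewise $\hc^3=\hc$, $(\hc^2)^*=\hc^2$. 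Consequently Definition~\ref{dd3.061} simplifies to $\gc^{GD,\dag}=\gc^{GD}\,\star_{\!\!\! M} \,\gc^2$ and $\hc^{GD,\dag}=\hc^{GD}\,\star_{\!\!\! M} \,\hc^2$. Also, using the commutation $\gc\,\star_{\!\!\! M} \,\hc=\hc\,\star_{\!\!\! M} \,\gc$ one has $(\gc\,\star_{\!\!\! M} \,\hc)^2=\gc^2\,\star_{\!\!\! M} \,\hc^2$, hence $(\gc\,\star_{\!\!\! M} \,\hc)^3=\gc^3\,\star_{\!\!\! M} \,\hc^3=\gc\,\star_{\!\!\! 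M} \,\hc$ and, by Lemma~\ref{ppaa}, $\big((\gc\,\star_{\!\!\! M} \,\hc)^2\big)^*=(\hc^2)^*\,\star_{\!\!\! M} \,(\gc^2)^*=\hc^2\,\star_{\!\!\! M} \,\gc^2=(\gc\,\star_{\!\!\! M} \,\hc)^2$; thus $\gc\,\star_{\!\!\! M} \,\hc$ obeys all four Moore--Penrose equations with itself, so $(\gc\,\star_{\!\!\! M} \,\hc)^\dag=\gc\,\star_{\!\!\! M} \,\hc$.

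Next I would apply Theorem~\ref{T6}: the hypotheses $\gc\,\star_{\!\!\! M} \,\hc=\hc\,\star_{\!\!\! M} \,\gc$ and $\hc\,\star_{\!\!\! M} \,\hc^{GD}\,\star_{\!\!\! M} \,\gc^{GD}=\gc^{GD}\,\star_{\!\!\! M} \,\hc\,\star_{\!\!\! M} \,\hc^{GD}$ are exactly those required, so $\hc^{GD}\,\star_{\!\!\! M} \,\gc^{GD}$ is a GD inverse of $\gc\,\star_{\!\!\! M} \,\hc$, and we fix $(\gc\,\star_{\!\!\! M} \,\hc)^{GD}=\hc^{GD}\,\star_{\!\!\! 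M} \,\gc^{GD}$. Combining this with the simplifications above, Definition~\ref{dd3.061} yields
\begin{equation*}
(\gc\,\star_{\!\!\! M} \,\hc)^{GD,\dag}=(\gc\,\star_{\!\!\! M} \,\hc)^{GD}\,\star_{\!\!\! M} \,(\gc\,\star_{\!\!\! M} \,\hc)\,\star_{\!\!\! M} \,(\gc\,\star_{\!\!\! M} \,\hc)^\dag=\hc^{GD}\,\star_{\!\!\! M} \,\gc^{GD}\,\star_{\!\!\! M} \,\gc^2\,\star_{\!\!\! M} \,\hc^2 .
\end{equation*}

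Finally I would move $\gc^{GD}\,\star_{\!\!\! M} \,\gc^2$ past $\hc^2$ using two commutation facts: $\gc\,\star_{\!\!\! M} \,\hc^2=\hc^2\,\star_{\!\!\! M} \,\gc$ (from $\gc\,\star_{\!\!\! M} \,\hc=\hc\,\star_{\!\!\! M} \,\gc$) and $\gc^{GD}\,\star_{\!\!\! M} \,\gc\,\star_{\!\!\! M} \,\hc^2=\hc^2\,\star_{\!\!\! M} \,\gc^{GD}\,\star_{\!\!\! M} \,\gc$ (from the hypothesis $\gc^{GD}\,\star_{\!\!\! M} \,\gc\,\star_{\!\!\! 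M} \,\hc=\hc\,\star_{\!\!\! M} \,\gc^{GD}\,\star_{\!\!\! M} \,\gc$), which give
\begin{equation*}
\gc^{GD}\,\star_{\!\!\! M} \,\gc^2\,\star_{\!\!\! M} \,\hc^2=\gc^{GD}\,\star_{\!\!\! M} \,\gc\,\star_{\!\!\! M} \,\hc^2\,\star_{\!\!\! M} \,\gc=\hc^2\,\star_{\!\!\! M} \,\gc^{GD}\,\star_{\!\!\! M} \,\gc^2 ,
\end{equation*}
and hence $(\gc\,\star_{\!\!\! M} \,\hc)^{GD,\dag}=\hc^{GD}\,\star_{\!\!\! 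M} \,\hc^2\,\star_{\!\!\! M} \,\gc^{GD}\,\star_{\!\!\! M} \,\gc^2=\hc^{GD,\dag}\,\star_{\!\!\! M} \,\gc^{GD,\dag}$. I expect the main obstacle to be purely organisational: keeping track of which commutation relation is applied at each step, together with the preliminary observation $(\gc\,\star_{\!\!\! M} \,\hc)^\dag=\gc\,\star_{\!\!\! M} \,\hc$; no new idea is needed once Theorem~\ref{T6} is available. As with the other reverse-order results, the stated identity is understood relative to the choice of GD inverse, which is legitimate here because Step~2 pins down $(\gc\,\star_{\!\!\! M} \,\hc)^{GD}$ as $\hc^{GD}\,\star_{\!\!\! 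M} \,\gc^{GD}$.
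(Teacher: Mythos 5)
Your proposal is correct and follows essentially the same route as the paper: establish $(\gc\,\star_{\!\!\! M} \,\hc)^\dag=\gc\,\star_{\!\!\! M} \,\hc$ from $\gc^\dag=\gc$, $\hc^\dag=\hc$ and commutativity, invoke the GD reverse-order law to get $(\gc\,\star_{\!\!\! M} \,\hc)^{GD}=\hc^{GD}\,\star_{\!\!\! M} \,\gc^{GD}$, and then shuffle factors using $\gc\,\star_{\!\!\! M} \,\hc=\hc\,\star_{\!\!\! M} \,\gc$ and $\gc^{GD}\,\star_{\!\!\! M} \,\gc\,\star_{\!\!\! M} \,\hc=\hc\,\star_{\!\!\! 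M} \,\gc^{GD}\,\star_{\!\!\! M} \,\gc$. Your appeal to Theorem~\ref{T6} is in fact the more accurate citation (the paper points to Theorem~\ref{T5}, whose extra hypothesis $\gc\,\star_{\!\!\! M} \,\hc\,\star_{\!\!\! M} \,\hc^{GD}=\hc\,\star_{\!\!\! M} \,\hc^{GD}\,\star_{\!\!\! M} \,\gc$ is not among the assumptions here, whereas Theorem~\ref{T6}'s hypotheses match exactly), and your explicit verification of the four Moore--Penrose equations for $\gc\,\star_{\!\!\! M} \,\hc$ is a small but welcome addition to what the paper merely asserts.
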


\begin{proof}
Since $\gc^\dag=\gc$ and $\hc^\dag=\hc$ and $\gc\,\star_{\!\!\! M} \, \hc=\hc\,\star_{\!\!\! M} \, \gc$, it follows
$$(\gc\,\star_{\!\!\! M} \, \hc)^\dag=\hc^\dag\,\star_{\!\!\! M} \, \gc^\dag=\gc^\dag\,\star_{\!\!\! M} \, \hc^\dag.$$
Now, from Theorem {\ref{T5}}, we conclude
\begin{align*}
  (\gc\,\star_{\!\!\! M} \, \hc)^{GD,\dag}&=(\gc\,\star_{\!\!\! M} \, \hc)^{GD}\,\star_{\!\!\! M} \, \gc\,\star_{\!\!\! M} \, \hc\,\star_{\!\!\! M} \, (\gc\,\star_{\!\!\! M} \, \hc)^\dag
  =\hc^{GD}\,\star_{\!\!\! M} \, \gc^{GD}\,\star_{\!\!\! M} \, \gc\,\star_{\!\!\! M} \, \hc\,\star_{\!\!\! M} \, \hc^\dag\,\star_{\!\!\! M} \, \gc^\dag\\
  &=\hc^{GD}\,\star_{\!\!\! M} \, \gc^{GD}\,\star_{\!\!\! M} \, \gc\,\star_{\!\!\! M} \, \hc\,\star_{\!\!\! M} \, \hc\,\star_{\!\!\! M} \, \gc
  =\hc^{GD}\,\star_{\!\!\! M} \, \hc\,\star_{\!\!\! M} \, \gc^{GD}\,\star_{\!\!\! M} \, \gc\,\star_{\!\!\! M} \, \hc\,\star_{\!\!\! M} \, \gc\\
  &=\hc^{GD}\,\star_{\!\!\! M} \, \hc\,\star_{\!\!\! M} \, \hc\,\star_{\!\!\! M} \, \gc^{GD}\,\star_{\!\!\! M} \, \gc\,\star_{\!\!\! M} \, \gc
  =\hc^{GD}\,\star_{\!\!\! M} \, \hc\,\star_{\!\!\! M} \, \hc^\dag\,\star_{\!\!\! M} \, \gc^{GD}\,\star_{\!\!\! M} \, \gc\,\star_{\!\!\! M} \, \gc^\dag\\
  &=\hc^{GD,\dag}\gc^{GD,\dag}.
\end{align*}
Hence, the desired reverse order property is verified.
\end{proof}

\begin{theorem}
Let $\gc, \hc\in\mathbb{C}^{\eta_1\times \eta_1\times \eta_3}$ and $\gc^\dag=\gc$, $\hc^\dag=\hc$. Suppose  $k=\max\{\ind(\gc), \ind(\hc)\}$. If
$$\gc\,\star_{\!\!\! M} \, \hc=\hc\,\star_{\!\!\! M} \, \gc, \ \ \ \hc^{GD}\,\star_{\!\!\! M} \, \hc\,\star_{\!\!\! M} \, \gc=\gc\,\star_{\!\!\! M} \, \hc^{GD}\,\star_{\!\!\! M} \, \hc,$$
then
$(\gc\,\star_{\!\!\! M} \, \hc)^{GD,\dag}=\gc^{GD,\dag}\,\star_{\!\!\! M} \, \hc^{GD,\dag}$.
\end{theorem}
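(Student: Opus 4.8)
The plan is to split the GDMP inverse into its GD and MP constituents, import the forward-order law for the GD inverse from Theorem~\ref{tt7} and the reverse-order law for the MP inverse used in the proof of the previous theorem, and then finish by rearranging a single $\,\star_{\!\!\! M} \,$-product word. Concretely, since $\gc^\dag=\gc$, $\hc^\dag=\hc$ and $\gc\,\star_{\!\!\! M} \,\hc=\hc\,\star_{\!\!\! M} \,\gc$, the reverse-order law for the MP inverse (exactly as invoked in the proof of the preceding theorem) gives $(\gc\,\star_{\!\!\! M} \,\hc)^\dag=\hc^\dag\,\star_{\!\!\! M} \,\gc^\dag$; and since the hypotheses $\gc\,\star_{\!\!\! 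M} \,\hc=\hc\,\star_{\!\!\! M} \,\gc$ and $\hc^{GD}\,\star_{\!\!\! M} \,\hc\,\star_{\!\!\! M} \,\gc=\gc\,\star_{\!\!\! M} \,\hc^{GD}\,\star_{\!\!\! M} \,\hc$ are exactly those of Theorem~\ref{tt7}, the tensor $\gc^{GD}\,\star_{\!\!\! M} \,\hc^{GD}$ is a GD inverse of $\gc\,\star_{\!\!\! M} \,\hc$.

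Feeding these two facts into the defining identity $(\gc\,\star_{\!\!\! M} \,\hc)^{GD,\dag}=(\gc\,\star_{\!\!\! M} \,\hc)^{GD}\,\star_{\!\!\! M} \,(\gc\,\star_{\!\!\! M} \,\hc)\,\star_{\!\!\! M} \,(\gc\,\star_{\!\!\! M} \,\hc)^\dag$ of Definition~\ref{dd3.061}, substituting $\gc^\dag=\gc$, $\hc^\dag=\hc$, and collapsing the resulting middle word $\gc\,\star_{\!\!\! M} \,\hc\,\star_{\!\!\! M} \,\hc\,\star_{\!\!\! M} \,\gc=\gc^2\,\star_{\!\!\! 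M} \,\hc^2$ (which only uses $\gc\,\star_{\!\!\! M} \,\hc=\hc\,\star_{\!\!\! M} \,\gc$), one arrives at
\[
(\gc\,\star_{\!\!\! M} \,\hc)^{GD,\dag}=\gc^{GD}\,\star_{\!\!\! M} \,\hc^{GD}\,\star_{\!\!\! M} \,\gc^2\,\star_{\!\!\! M} \,\hc^2,\qquad
\gc^{GD,\dag}\,\star_{\!\!\! M} \,\hc^{GD,\dag}=\gc^{GD}\,\star_{\!\!\! M} \,\gc^2\,\star_{\!\!\! M} \,\hc^{GD}\,\star_{\!\!\! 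M} \,\hc^2 .
\]
Hence the whole theorem reduces to the single identity $\gc^{GD}\,\star_{\!\!\! M} \,\hc^{GD}\,\star_{\!\!\! M} \,\gc^2\,\star_{\!\!\! M} \,\hc^2=\gc^{GD}\,\star_{\!\!\! M} \,\gc^2\,\star_{\!\!\! M} \,\hc^{GD}\,\star_{\!\!\! M} \,\hc^2$.

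This last identity is the part I expect to cost the most care: one must move $\hc^{GD}$ across $\gc^2$ using nothing but the two commutation hypotheses and associativity of the product. The key observations are that $\gc\,\star_{\!\!\! M} \,\hc=\hc\,\star_{\!\!\! M} \,\gc$ makes $\gc^2$, $\hc$, $\hc^2$ commute with one another, and that the second hypothesis literally says the block $\hc^{GD}\,\star_{\!\!\! M} \,\hc$ commutes with $\gc$, hence with $\gc^2$. Writing $\hc^{GD}\,\star_{\!\!\! M} \,\hc^2=(\hc^{GD}\,\star_{\!\!\! M} \,\hc)\,\star_{\!\!\! M} \,\hc$ in the left side, one transports the trailing $\hc$ past $\gc^2$ and then the block $\hc^{GD}\,\star_{\!\!\! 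M} \,\hc$ past $\gc^2$; equivalently, one applies $\hc^{GD}\,\star_{\!\!\! M} \,\hc\,\star_{\!\!\! M} \,\gc=\gc\,\star_{\!\!\! M} \,\hc^{GD}\,\star_{\!\!\! M} \,\hc$ twice inside $\hc^{GD}\,\star_{\!\!\! M} \,\hc\,\star_{\!\!\! M} \,\gc^2\,\star_{\!\!\! M} \,\hc$ to drag $\gc^2$ to the front. Either route turns the left side into $\gc^{GD}\,\star_{\!\!\! M} \,\gc^2\,\star_{\!\!\! M} \,\hc^{GD}\,\star_{\!\!\! M} \,\hc^2$, which is the right side, and the forward-order law follows. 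It is worth noticing that the integer $k$ never enters beyond naming exponents in the GD identities, and that no structure of $\gc,\hc$ beyond $\gc^\dag=\gc$, $\hc^\dag=\hc$ is required.
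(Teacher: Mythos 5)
Your proposal is correct and uses exactly the same ingredients as the paper's proof: the Moore--Penrose reverse-order law under $\gc^\dag=\gc$, $\hc^\dag=\hc$, Theorem~\ref{tt7} for $(\gc\,\star_{\!\!\! M} \,\hc)^{GD}=\gc^{GD}\,\star_{\!\!\! M} \,\hc^{GD}$, and two applications of the block-commutation hypothesis $\hc^{GD}\,\star_{\!\!\! M} \,\hc\,\star_{\!\!\! M} \,\gc=\gc\,\star_{\!\!\! M} \,\hc^{GD}\,\star_{\!\!\! M} \,\hc$ interleaved with $\gc\,\star_{\!\!\! M} \,\hc=\hc\,\star_{\!\!\! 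M} \,\gc$. The only difference is organizational (you normalize both sides to $\gc^{GD}\,\star_{\!\!\! M} \,\hc^{GD}\,\star_{\!\!\! M} \,\gc^2\,\star_{\!\!\! M} \,\hc^2$ versus $\gc^{GD}\,\star_{\!\!\! M} \,\gc^2\,\star_{\!\!\! M} \,\hc^{GD}\,\star_{\!\!\! M} \,\hc^2$ and then compare, while the paper chains the equalities directly), which is not a substantive departure.
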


\begin{proof}
Since $\gc^\dag=\gc$ and $\hc^\dag=\hc$ and $\gc\,\star_{\!\!\! M} \, \hc=\hc\,\star_{\!\!\! M} \, \gc$, we get
$$(\gc\,\star_{\!\!\! M} \, \hc)^\dag=\hc^\dag\,\star_{\!\!\! M} \, \gc^\dag=\gc^\dag\,\star_{\!\!\! M} \, \hc^\dag.$$
Now, results of Theorem {\ref{tt7}} imply
\begin{align*}
  (\gc\,\star_{\!\!\! M} \, \hc)^{GD,\dag}&=(\gc\,\star_{\!\!\! M} \, \hc)^{GD}\,\star_{\!\!\! M} \, \gc\,\star_{\!\!\! M} \, \hc\,\star_{\!\!\! M} \, (\gc\,\star_{\!\!\! M} \, \hc)^\dag
  =\gc^{GD}\,\star_{\!\!\! M} \, \hc^{GD}\,\star_{\!\!\! M} \, \gc\,\star_{\!\!\! M} \, \hc\,\star_{\!\!\! M} \, \hc^\dag\,\star_{\!\!\! M} \, \gc^\dag\\
  &=\gc^{GD}\,\star_{\!\!\! M} \, \hc^{GD}\,\star_{\!\!\! M} \, \gc\,\star_{\!\!\! M} \, \hc\,\star_{\!\!\! M} \, \hc\,\star_{\!\!\! M} \, \gc=
  \gc^{GD}\,\star_{\!\!\! M} \, \hc^{GD}\,\star_{\!\!\! M} \, \hc\,\star_{\!\!\! M} \, \gc\,\star_{\!\!\! M} \, \hc\,\star_{\!\!\! M} \, \gc\\
  &=\gc^{GD}\,\star_{\!\!\! M} \, \gc\,\star_{\!\!\! M} \, \hc^{GD}\,\star_{\!\!\! M} \, \hc\,\star_{\!\!\! M} \, \hc\,\star_{\!\!\! M} \, \gc
  =\gc^{GD}\,\star_{\!\!\! M} \, \gc\,\star_{\!\!\! M} \, \hc^{GD}\,\star_{\!\!\! M} \, \hc\,\star_{\!\!\! M} \, \gc\,\star_{\!\!\! M} \, \hc\\
  &=\gc^{GD}\,\star_{\!\!\! M} \, \gc\,\star_{\!\!\! M} \, \gc\,\star_{\!\!\! M} \, \hc^{GD}\,\star_{\!\!\! M} \, \hc\,\star_{\!\!\! M} \, \hc
  =\gc^{GD}\,\star_{\!\!\! M} \, \gc\,\star_{\!\!\! M} \, \gc^\dag\,\star_{\!\!\! M} \, \hc^{GD}\,\star_{\!\!\! M} \, \hc\,\star_{\!\!\! M} \, \hc^\dag\\
  &=\gc^{GD,\dag}\,\star_{\!\!\! M} \, \hc^{GD,\dag}.
\end{align*}
Hence, the statement is verified.
\end{proof}

\begin{theorem}
Let us suppose the constraints $\gc^\dag=\gc$, $\hc^\dag=\hc$, $\gc\,\star_{\!\!\! M} \, \hc=\hc\,\star_{\!\!\! M} \, \gc=\mathcal{O}$ for  $\gc, \hc\in\mathbb{C}^{\eta_1\times \eta_1\times \eta_3}$ and observe $k=\max\{\ind(\gc), \ind(\hc)\}$.  If
$$\gc^{GD}\,\star_{\!\!\! M} \, \hc=\hc\,\star_{\!\!\! M} \, \gc^{GD}=\mathcal{O}, \ \ \  \hc^{GD}\,\star_{\!\!\! M} \, \gc=\gc\,\star_{\!\!\! M} \, \hc^{GD}=\mathcal{O},$$
then $(\gc+\hc)^{GD,\dag}=\gc^{GD,\dag}+\hc^{GD,\dag}$.
\end{theorem}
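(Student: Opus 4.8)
The plan is to obtain this additive law for the GDMP inverse from three ingredients: the additive law for the GD inverse already proved in Theorem~\ref{tt8}, an additive law for the Moore--Penrose inverse under the same orthogonality hypotheses, and a term-by-term expansion of the defining identity $\gc^{GD,\dag}=\gc^{GD}\,\star_{\!\!\! M} \, \gc\,\star_{\!\!\! M} \, \gc^\dag$ from Definition~\ref{dd3.061}.

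First I would record all the orthogonality relations the hypotheses force. Since $\gc^\dag=\gc$ and $\hc^\dag=\hc$, the assumptions $\gc\,\star_{\!\!\! M} \, \hc=\hc\,\star_{\!\!\! M} \, \gc=\mathcal{O}$ immediately give $\gc^\dag\,\star_{\!\!\! M} \, \hc=\gc\,\star_{\!\!\! M} \, \hc^\dag=\hc^\dag\,\star_{\!\!\! M} \, \gc=\hc\,\star_{\!\!\! M} \, \gc^\dag=\mathcal{O}$, while $\gc^{GD}\,\star_{\!\!\! M} \, \hc=\hc\,\star_{\!\!\! M} \, \gc^{GD}=\hc^{GD}\,\star_{\!\!\! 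M} \, \gc=\gc\,\star_{\!\!\! M} \, \hc^{GD}=\mathcal{O}$ are assumed outright. By associativity, any M-product containing at least one factor from $\{\gc,\gc^\dag,\gc^{GD}\}$ and at least one from $\{\hc,\hc^\dag,\hc^{GD}\}$ then vanishes; this single observation handles all the bookkeeping below.

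Second I would show $(\gc+\hc)^\dag=\gc^\dag+\hc^\dag=\gc+\hc$. As in the proof of Theorem~\ref{tt8}, $\gc\,\star_{\!\!\! M} \, \hc=\hc\,\star_{\!\!\! M} \, \gc=\mathcal{O}$ gives $(\gc+\hc)^n=\gc^n+\hc^n$ for every $n\ge 1$. In particular $(\gc+\hc)^3=\gc\,\star_{\!\!\! M} \, \gc^\dag\,\star_{\!\!\! M} \, \gc+\hc\,\star_{\!\!\! M} \, \hc^\dag\,\star_{\!\!\! M} \, \hc=\gc+\hc$, which is conditions (I) and (II) of \eqref{mpkk0} for $\xc=\gc+\hc$, and $(\gc+\hc)^2=\gc\,\star_{\!\!\! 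M} \, \gc^\dag+\hc\,\star_{\!\!\! M} \, \hc^\dag$ is Hermitian by conditions (III) and (IV) of \eqref{mpkk0} applied to $\gc$ and $\hc$ separately, which gives the two symmetry conditions for $\xc=\gc+\hc$. Hence $\gc+\hc$ is its own MP inverse.

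Third I would apply Theorem~\ref{tt8}: its hypotheses are exactly $\gc\,\star_{\!\!\! M} \, \hc=\hc\,\star_{\!\!\! M} \, \gc=\mathcal{O}$ together with $\gc^{GD}\,\star_{\!\!\! M} \, \hc=\hc\,\star_{\!\!\! M} \, \gc^{GD}=\mathcal{O}$ and $\hc^{GD}\,\star_{\!\!\! M} \, \gc=\gc\,\star_{\!\!\! M} \, \hc^{GD}=\mathcal{O}$, all assumed here, so $\gc^{GD}+\hc^{GD}\in(\gc+\hc)\{GD\}$ and $(\gc+\hc)^{GD}=\gc^{GD}+\hc^{GD}$. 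Combining the three ingredients via Definition~\ref{dd3.061},
\[
(\gc+\hc)^{GD,\dag}=(\gc^{GD}+\hc^{GD})\,\star_{\!\!\! M} \, (\gc+\hc)\,\star_{\!\!\! M} \, (\gc^\dag+\hc^\dag);
\]
expanding the right side, every mixed term is $\mathcal{O}$ by the first step, so it collapses to $\gc^{GD}\,\star_{\!\!\! M} \, \gc\,\star_{\!\!\! M} \, \gc^\dag+\hc^{GD}\,\star_{\!\!\! M} \, \hc\,\star_{\!\!\! M} \, \hc^\dag=\gc^{GD,\dag}+\hc^{GD,\dag}$, which is the claim. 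The only step that is not purely mechanical is the MP additive law in the second paragraph, since the paper has not recorded it and it must be verified directly against the four equations \eqref{mpkk0}; everything else is routine expansion using the orthogonality relations.
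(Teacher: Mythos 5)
Your proposal is correct and follows essentially the same route as the paper: invoke Theorem~\ref{tt8} for $(\gc+\hc)^{GD}=\gc^{GD}+\hc^{GD}$, note $(\gc+\hc)^\dag=\gc^\dag+\hc^\dag$, then expand $(\gc+\hc)^{GD}\,\star_{\!\!\! M} \,(\gc+\hc)\,\star_{\!\!\! M} \,(\gc+\hc)^\dag$ and kill the mixed terms. The only difference is that you actually verify the Moore--Penrose additivity against the four equations of \eqref{mpkk0} (via $(\gc+\hc)^3=\gc+\hc$ and the Hermitian square), whereas the paper simply asserts it; this is a welcome tightening rather than a different argument.
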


\begin{proof}
Since $\gc^\dag=\gc$, $\hc^\dag=\hc$ and $\gc\,\star_{\!\!\! M} \, \hc=\hc\,\star_{\!\!\! M} \, \gc=\mathcal{O}$, we have $(\gc+\hc)^\dag=\gc^\dag+\hc^\dag$. From Theorem \ref{tt8}, we have $(\gc+\hc)^{GD}=\gc^{GD}+\hc^{GD}$. Now,
\begin{align*}
  (\gc+\hc)^{GD,\dag}&=(\gc+\hc)^{GD}\,\star_{\!\!\! M} \, (\gc+\hc)\,\star_{\!\!\! M} \, (\gc+\hc)^\dag
  =(\gc^{GD}+\hc^{GD})\,\star_{\!\!\! M} \, (\gc+\hc)\,\star_{\!\!\! M} \, (\gc^\dag+\hc^\dag)\\
  &=(\gc^{GD}\,\star_{\!\!\! M} \, \gc+\gc^{GD}\,\star_{\!\!\! M} \, \hc+\hc^{GD}\,\star_{\!\!\! M} \, \gc+\hc^{GD}\,\star_{\!\!\! M} \, \hc)\,\star_{\!\!\! M} \, (\gc^\dag+\hc^\dag)\\
  &=(\gc^{GD}\,\star_{\!\!\! M} \, \gc+\hc^{GD}\,\star_{\!\!\! M} \, \hc)\,\star_{\!\!\! M} \, (\gc^\dag+\hc^\dag)\\
  &=\gc^{GD}\,\star_{\!\!\! M} \, \gc\,\star_{\!\!\! M} \, \gc^\dag+
  \gc^{GD}\,\star_{\!\!\! M} \, \gc\,\star_{\!\!\! M} \, \hc^\dag+\hc^{GD}\,\star_{\!\!\! M} \, \hc\,\star_{\!\!\! M} \, \gc^\dag+\hc^{GD}\,\star_{\!\!\! M} \, \hc\,\star_{\!\!\! M} \, \hc^\dag\\
  &=\gc^{GD,\dag}+\hc^{GD,\dag}.
\end{align*}
Hence,
$(\gc\,\star_{\!\!\! M} \, \hc)^{GD,\dag}=
\gc^{GD,\dag}\,\star_{\!\!\! M} \, \hc^{GD,\dag}$.
\end{proof}

\subsection{The GD-Star inverse of a tensor}

In this subsection, we will establish and examine the GD-Star inverse of a tensor. Firstly, we give the definition of the  GD-Star inverse.
\begin{definition}\label{dd3.061}
Let $\gc \in\mathbb{C}^{\eta_1\times \eta_1\times \eta_3}$, $\ind(\gc)=k$., and $\gc^{GD}\in\gc\{GD\}$.
Then, $\gc^{GD,*}=\gc^{GD}\,\star_{\!\!\! M} \, \gc\,\star_{\!\!\! M} \, \gc^*$ is called a {\em GD-Star inverse} of $\gc$.

\end{definition}

Since the GD inverse is not unique and so is the GD-Star inverse.
The notation $\gc\{GD, *\}=\{\gc^{GD, *}|\gc \in\mathbb{C}^{\eta_1\times \eta_1\times \eta_3}\}$ will stand for the set of GD-Star inverses of $\gc$.

Algorithm \ref{Alg33TM} describes procedure for computing the GD-Star inverse.

\begin{algorithm}[H]\label{Alg33TM}
  \caption{Computing the GD-Star inverse under the M-product}	
\KwIn{$\gc\in\mathbb{C}^{\eta_1\times \eta_1\times \eta_3}$ and $M \in\mathbb{C}^{\eta_3\times \eta_3}$}
\KwOut {$\xc=\gc^{GD,*}$}

\begin{enumerate}\label{ao}

\item Compute $\widehat{\gc}=\gc\times_3M$
\item $k=\ind(\widehat{\gc})$
\item $\textbf{For}$ $i=1:\eta_3$ \textbf{do}
\item ~~~~~~$\widehat{\mathcal{R}}^{(i)}=(\widehat{\gc^{GD}})^{(i)}(\widehat{\gc})^{(i)}(\widehat{\gc^*})^{(i)}$
\item \textbf{End for}
\item Compute $\xc=\widehat{\mathcal{R}}\times_3M^{-1}$

\item \textbf{Return} $\gc^{GD,^*}=\xc$.
\end{enumerate}

\end{algorithm}

\begin{example}
Let $\gc\in\mathbb{C}^{3\times3\times3}$ and $M\in\mathbb{C}^{3\times3}$ with entries
$$(\mathcal A)^{(1)}=
\begin{bmatrix}
5&1&1\\
6&0&1\\
5&2&3
\end{bmatrix},~
(\mathcal A)^{(2)}=
\begin{bmatrix}
2&0&0\\
1&0&1\\
1&0&2
\end{bmatrix},~
(\mathcal A)^{(3)}=
\begin{bmatrix}
2&-1&-1\\
1&-2&-1\\
1&-1&0
\end{bmatrix},~
M=
\begin{bmatrix}
1&0&-1\\
0&1&0\\
0&1&-1
\end{bmatrix}.
$$
We evaluate the index of $\gc$ is $k=2$, since $\ind((\widehat{\gc})^{(1)})=\ind((\widehat{\gc})^{(2)})=\ind((\widehat{\gc})^{(3)})=2$.
By Algorithm \ref{ao}, we calculate $\xc=\gc^{GD}\,\star_{\!\!\! M} \, \gc\,\star_{\!\!\! M} \, \gc^*$, where
$$
(\mathcal X)^{(1)}=
\begin{bmatrix}
0.0066&-3.985&-9.9922\\
-12.1586&-27.565&-33.0738\\
-2.3238&-4.645&-14.6554
\end{bmatrix},$$
$$(\mathcal X)^{(2)}=
\begin{bmatrix}
2&1&1\\
0.5&0.75&1.25\\
0&1&2
\end{bmatrix},~
(\mathcal X)^{(3)}=
\begin{bmatrix}
-3&-9&-14\\
-10.5&-21.5&-31.75\\
-8&-15&-22
\end{bmatrix}.
$$
$\Box$
\end{example}

\begin{theorem}\label{T15}
Let $\gc \in\mathbb{C}^{\eta_1\times \eta_1\times \eta_3}$ is of the index $\ind(\gc)=k$.
For each $c\geq1$, the system
\begin{equation*}
 \textnormal{(I)} \ \xc\,\star_{\!\!\! M} \, (\gc^\dag)^*\,\star_{\!\!\! M} \, \xc=\xc, \ \ \ \textnormal{(II)} \   \gc^c\,\star_{\!\!\! M} \, \xc=\gc^c\,\star_{\!\!\! M} \, \gc^*, \ \ \
 \textnormal{(III)} \ \xc\,\star_{\!\!\! M} \, (\gc^\dag)^*=\gc^{GD}\,\star_{\!\!\! M} \, \gc
\end{equation*}
possesses the solution $\xc=\gc^{GD}\,\star_{\!\!\! M} \, \gc\,\star_{\!\!\! M} \, \gc^*$.
\end{theorem}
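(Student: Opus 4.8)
The plan is to verify directly that the candidate $\xc=\gc^{GD}\,\star_{\!\!\! M} \,\gc\,\star_{\!\!\! M} \,\gc^*$ (that is, the GD-Star inverse $\gc^{GD,*}$ of $\gc$) satisfies the three equations (I)--(III). Before touching them I would record the elementary identities to be reused: from condition (I) of Definition~\ref{d07}, $\gc\,\star_{\!\!\! M} \,\gc^{GD}\,\star_{\!\!\! M} \,\gc=\gc$, whence by associativity $\gc^{c}\,\star_{\!\!\! M} \,\gc^{GD}\,\star_{\!\!\! M} \,\gc=\gc^{c-1}\,\star_{\!\!\! M} \,(\gc\,\star_{\!\!\! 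M} \,\gc^{GD}\,\star_{\!\!\! M} \,\gc)=\gc^{c}$ for every $c\ge1$, and likewise $\gc^{GD}\,\star_{\!\!\! M} \,\gc\,\star_{\!\!\! M} \,\gc^{GD}\,\star_{\!\!\! M} \,\gc=\gc^{GD}\,\star_{\!\!\! M} \,(\gc\,\star_{\!\!\! M} \,\gc^{GD}\,\star_{\!\!\! M} \,\gc)=\gc^{GD}\,\star_{\!\!\! M} \,\gc$; from the Moore-Penrose equations, $\gc\,\star_{\!\!\! M} \,\gc^\dag\,\star_{\!\!\! M} \,\gc=\gc$ and $(\gc^\dag\,\star_{\!\!\! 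M} \,\gc)^*=\gc^\dag\,\star_{\!\!\! M} \,\gc$; and from Lemma~\ref{ppaa}, $\gc^*\,\star_{\!\!\! M} \,(\gc^\dag)^*=(\gc^\dag\,\star_{\!\!\! M} \,\gc)^*$, so combining the last two gives $\gc^*\,\star_{\!\!\! M} \,(\gc^\dag)^*=\gc^\dag\,\star_{\!\!\! M} \,\gc$.

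With these in hand, equations (II) and (III) are immediate. For (II): $\gc^{c}\,\star_{\!\!\! M} \,\xc=(\gc^{c}\,\star_{\!\!\! M} \,\gc^{GD}\,\star_{\!\!\! M} \,\gc)\,\star_{\!\!\! M} \,\gc^*=\gc^{c}\,\star_{\!\!\! M} \,\gc^*$ for $c\ge1$. For (III): $\xc\,\star_{\!\!\! M} \,(\gc^\dag)^*=\gc^{GD}\,\star_{\!\!\! M} \,\gc\,\star_{\!\!\! M} \,(\gc^*\,\star_{\!\!\! M} \,(\gc^\dag)^*)=\gc^{GD}\,\star_{\!\!\! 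M} \,\gc\,\star_{\!\!\! M} \,\gc^\dag\,\star_{\!\!\! M} \,\gc=\gc^{GD}\,\star_{\!\!\! M} \,(\gc\,\star_{\!\!\! M} \,\gc^\dag\,\star_{\!\!\! M} \,\gc)=\gc^{GD}\,\star_{\!\!\! M} \,\gc$.

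For (I) I would feed (III) back in: $\xc\,\star_{\!\!\! M} \,(\gc^\dag)^*\,\star_{\!\!\! M} \,\xc=(\xc\,\star_{\!\!\! M} \,(\gc^\dag)^*)\,\star_{\!\!\! M} \,\xc=\gc^{GD}\,\star_{\!\!\! M} \,\gc\,\star_{\!\!\! M} \,\xc=\gc^{GD}\,\star_{\!\!\! M} \,\gc\,\star_{\!\!\! M} \,\gc^{GD}\,\star_{\!\!\! M} \,\gc\,\star_{\!\!\! M} \,\gc^*$, and then collapse the middle factor via $\gc^{GD}\,\star_{\!\!\! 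M} \,\gc\,\star_{\!\!\! M} \,\gc^{GD}\,\star_{\!\!\! M} \,\gc=\gc^{GD}\,\star_{\!\!\! M} \,\gc$, landing back on $\gc^{GD}\,\star_{\!\!\! M} \,\gc\,\star_{\!\!\! M} \,\gc^*=\xc$. The one point that needs a moment of attention is precisely this last reduction: one must \emph{not} invoke $\gc^{GD}\,\star_{\!\!\! M} \,\gc\,\star_{\!\!\! M} \,\gc^{GD}=\gc^{GD}$, which fails for a general GD inverse (it holds for the Drazin inverse but not here, since the nilpotent block of $\gc^{GD}$ is only a $\{1\}$-inverse of $N^{(i)}$); instead one re-brackets so that the outer relation $\gc\,\star_{\!\!\! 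M} \,\gc^{GD}\,\star_{\!\!\! M} \,\gc=\gc$ of Definition~\ref{d07} applies. Apart from that subtlety the argument is a routine chain of associativity steps, so I do not expect any genuine obstacle.
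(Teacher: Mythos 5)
Your proposal is correct and follows essentially the same route as the paper: a direct verification that $\xc=\gc^{GD}\,\star_{\!\!\! M} \,\gc\,\star_{\!\!\! M} \,\gc^*$ satisfies (I)--(III), using $\gc\,\star_{\!\!\! M} \,\gc^{GD}\,\star_{\!\!\! M} \,\gc=\gc$, the Moore--Penrose identities, and $\gc^*\,\star_{\!\!\! M} \,(\gc^\dag)^*=(\gc^\dag\,\star_{\!\!\! M} \,\gc)^*=\gc^\dag\,\star_{\!\!\! M} \,\gc$. Your reorganization of (I) via (III) and your caution against using $\gc^{GD}\,\star_{\!\!\! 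M} \,\gc\,\star_{\!\!\! M} \,\gc^{GD}=\gc^{GD}$ are both sound; the paper performs the same collapse by re-bracketing around $\gc\,\star_{\!\!\! M} \,\gc^{GD}\,\star_{\!\!\! M} \,\gc=\gc$.
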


\begin{proof}
The proof is based on the verification that $\xc=\gc^{GD}\,\star_{\!\!\! M} \, \gc\,\star_{\!\!\! M} \, \gc^*$ satisfies the given tensor equations.
The tensor equation (I) is verified by the following identities:
\begin{align*}
 \xc\,\star_{\!\!\! M} \, (\gc^\dag)^*\,\star_{\!\!\! M} \, \xc  &=\gc^{GD}\,\star_{\!\!\! M} \, \gc\,\star_{\!\!\! M} \, \gc^*\,\star_{\!\!\! M} \, (\gc^\dag)^*\,\star_{\!\!\! M} \, \gc^{GD}\,\star_{\!\!\! M} \, \gc\,\star_{\!\!\! M} \, \gc^*\\
& =\gc^{GD}\,\star_{\!\!\! M} \, \gc\,\star_{\!\!\! M} \, (\gc^\dag\,\star_{\!\!\! M} \, \gc)^*\,\star_{\!\!\! M} \, \gc^{GD}\,\star_{\!\!\! M} \, \gc\,\star_{\!\!\! M} \, \gc^*\\
 &=\gc^{GD}\,\star_{\!\!\! M} \, \gc\,\star_{\!\!\! M} \, \gc^\dag\,\star_{\!\!\! M} \, \gc\,\star_{\!\!\! M} \, \gc^{GD}\,\star_{\!\!\! M} \, \gc\,\star_{\!\!\! M} \, \gc^*
 =\gc^{GD}\,\star_{\!\!\! M} \, \gc\,\star_{\!\!\! M} \, \gc^{GD}\,\star_{\!\!\! M} \, \gc\,\star_{\!\!\! M} \, \gc^*\\
 &=\gc^{GD}\,\star_{\!\!\! M} \, \gc\,\star_{\!\!\! M} \, \gc^*=\xc.
\end{align*}
Under the assumption $c\geq1$ it can be verified
\begin{align*}
  \gc^c\,\star_{\!\!\! M} \, \xc=\gc^c\,\star_{\!\!\! M} \, \gc^{GD}\,\star_{\!\!\! M} \, \gc\,\star_{\!\!\! M} \, \gc^*
  =\gc^{c-1}\,\star_{\!\!\! M} \, \gc\,\star_{\!\!\! M} \, \gc^{GD}\,\star_{\!\!\! M} \, \gc\,\star_{\!\!\! M} \, \gc^*
  =\gc^{c-1}\,\star_{\!\!\! M} \, \gc\,\star_{\!\!\! M} \, \gc^*=\gc^c\,\star_{\!\!\! M} \, \gc^*,
\end{align*}
and
\begin{align*}
  \xc\,\star_{\!\!\! M} \, (\gc^\dag)^*
  =\gc^{GD}\,\star_{\!\!\! M} \, \gc\,\star_{\!\!\! M} \, \gc^*\,\star_{\!\!\! M} \, (\gc^\dag)^*
  =\gc^{GD}\,\star_{\!\!\! M} \, \gc\,\star_{\!\!\! M} \, (\gc^\dag\,\star_{\!\!\! M} \, \gc)^*=\gc^{GD}\,\star_{\!\!\! M} \, \gc\,\star_{\!\!\! M} \, \gc^\dag\,\star_{\!\!\! M} \, \gc
  =\gc^{GD}\,\star_{\!\!\! M} \, \gc.
\end{align*}
Thus, $\gc^{GD,*}=\gc^{GD}\,\star_{\!\!\! M} \, \gc\,\star_{\!\!\! M} \, \gc^*$.
\end{proof}

\begin{theorem}\label{Thm16TM}
Let $\gc \in\mathbb{C}^{\eta_1\times \eta_1\times \eta_3}$ possesses the index $\ind(\gc)=k$. Then,
\begin{description}
  \item  [(a)] $\gc\,\star_{\!\!\! M} \, \gc^{GD,*}\,\star_{\!\!\! M} \, (\gc^\dag)^*=\gc$.
  \item  [(b)] $\gc^{\dag}\,\star_{\!\!\! M} \, \gc\,\star_{\!\!\! M} \, \gc^{GD,*}=\gc^*$.
  \item  [(c)] $\gc^k\,\star_{\!\!\! M} \, \gc^{GD,*}\,\star_{\!\!\! M} \, (\gc^\dag)^*=\gc^k$ and $\gc^{GD,*}\,\star_{\!\!\! M} \, (\gc^\dag)^*\,\star_{\!\!\! M} \, \gc^k=\gc^k$.
  \item  [(d)] $\gc^{\dag}\,\star_{\!\!\! M} \, \gc\,\star_{\!\!\! M} \, (\gc^{GD,*})^2=\gc^*\,\star_{\!\!\! M} \, \gc^{GD,*}$ and $\gc^{\dag}\,\star_{\!\!\! M} \, \gc\,\star_{\!\!\! M} \, (\gc^{GD,*})^2\,\star_{\!\!\! M} \, \gc\,\star_{\!\!\! M} \, \gc^{\dag}=\gc^*\,\star_{\!\!\! M} \, \gc^{GD,*}$.
  \item  [(e)] $(\gc\,\star_{\!\!\! M} \, \gc^{GD,*})^*=\gc\,\star_{\!\!\! M} \, \gc^{GD,*}$.
  \item  [(f)] $(\gc^\dag)^*\,\star_{\!\!\! M} \, \gc^{GD,*}\,\star_{\!\!\! M} \, (\gc^\dag)^*=(\gc^\dag)^*$.
\end{description}
\end{theorem}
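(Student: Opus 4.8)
The plan is to derive all six items from a compact toolkit and then run a short cancellation chain for each. First I would record the standard ``absorption'' identities for the Moore--Penrose inverse, each obtained by conjugate-transposing one Penrose equation and applying Lemma~\ref{ppaa}:
\begin{equation*}
\gc^\dag\,\star_{\!\!\! M} \, \gc\,\star_{\!\!\! M} \, \gc^* = \gc^*,\qquad
\gc^*\,\star_{\!\!\! M} \, \gc\,\star_{\!\!\! M} \, \gc^\dag = \gc^*,\qquad
\gc^*\,\star_{\!\!\! M} \, (\gc^\dag)^* = \gc^\dag\,\star_{\!\!\! M} \, \gc,\qquad
(\gc^\dag)^*\,\star_{\!\!\! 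M} \, \gc^\dag\,\star_{\!\!\! M} \, \gc = (\gc^\dag)^*,
\end{equation*}
together with the GD consequence $\gc^c\,\star_{\!\!\! M} \, \gc^{GD}\,\star_{\!\!\! M} \, \gc = \gc^c$ for every integer $c\ge 1$ (factor one copy of $\gc\,\star_{\!\!\! M} \, \gc^{GD}\,\star_{\!\!\! M} \, \gc = \gc$ out of $\gc^c$) and the defining relation $\gc^{GD}\,\star_{\!\!\! M} \, \gc^{k+1} = \gc^k$ from Definition~\ref{d07}. Next I would prove two normal-form identities for $\gc^{GD,*} = \gc^{GD}\,\star_{\!\!\! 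M} \, \gc\,\star_{\!\!\! M} \, \gc^*$, namely
\begin{equation*}
\gc\,\star_{\!\!\! M} \, \gc^{GD,*} = \gc\,\star_{\!\!\! M} \, \gc^*,\qquad
\gc^{GD,*}\,\star_{\!\!\! M} \, (\gc^\dag)^* = \gc^{GD}\,\star_{\!\!\! M} \, \gc,
\end{equation*}
the first by collapsing $\gc\,\star_{\!\!\! M} \, \gc^{GD}\,\star_{\!\!\! M} \, \gc=\gc$ and the second by rewriting $\gc^*\,\star_{\!\!\! M} \, (\gc^\dag)^* = \gc^\dag\,\star_{\!\!\! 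M} \, \gc$ and then collapsing $\gc\,\star_{\!\!\! M} \, \gc^\dag\,\star_{\!\!\! M} \, \gc=\gc$; both are instances of parts~(II)--(III) of Theorem~\ref{T15}.

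With this in hand the items become short. For (a), $\gc\,\star_{\!\!\! M} \, \gc^{GD,*}\,\star_{\!\!\! M} \, (\gc^\dag)^* = \gc\,\star_{\!\!\! M} \, (\gc^{GD}\,\star_{\!\!\! M} \, \gc) = \gc$. For (b), $\gc^\dag\,\star_{\!\!\! M} \, \gc\,\star_{\!\!\! M} \, \gc^{GD,*} = \gc^\dag\,\star_{\!\!\! M} \, (\gc\,\star_{\!\!\! M} \, \gc^*) = \gc^*$. For (c), the first equality is $\gc^k\,\star_{\!\!\! M} \, (\gc^{GD}\,\star_{\!\!\! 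M} \, \gc) = \gc^k$ and the second is $\gc^{GD}\,\star_{\!\!\! M} \, \gc\,\star_{\!\!\! M} \, \gc^k = \gc^{GD}\,\star_{\!\!\! M} \, \gc^{k+1} = \gc^k$. For (e), $\gc\,\star_{\!\!\! M} \, \gc^{GD,*} = \gc\,\star_{\!\!\! M} \, \gc^*$ is patently Hermitian. Item (d) uses (b): $\gc^\dag\,\star_{\!\!\! M} \, \gc\,\star_{\!\!\! M} \, (\gc^{GD,*})^2 = (\gc^\dag\,\star_{\!\!\! M} \, \gc\,\star_{\!\!\! M} \, \gc^{GD,*})\,\star_{\!\!\! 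M} \, \gc^{GD,*} = \gc^*\,\star_{\!\!\! M} \, \gc^{GD,*}$; for the longer identity one further notes $\gc^{GD,*}\,\star_{\!\!\! M} \, \gc\,\star_{\!\!\! M} \, \gc^\dag = \gc^{GD}\,\star_{\!\!\! M} \, \gc\,\star_{\!\!\! M} \, (\gc^*\,\star_{\!\!\! M} \, \gc\,\star_{\!\!\! M} \, \gc^\dag) = \gc^{GD,*}$ and post-multiplies. Finally for (f), $(\gc^\dag)^*\,\star_{\!\!\! M} \, \gc^{GD,*}\,\star_{\!\!\! M} \, (\gc^\dag)^* = (\gc^\dag)^*\,\star_{\!\!\! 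M} \, (\gc^{GD}\,\star_{\!\!\! M} \, \gc)$, into which I insert $(\gc^\dag)^* = (\gc^\dag)^*\,\star_{\!\!\! M} \, \gc^\dag\,\star_{\!\!\! M} \, \gc$ and then collapse $\gc\,\star_{\!\!\! M} \, \gc^{GD}\,\star_{\!\!\! M} \, \gc = \gc$ to land back on $(\gc^\dag)^*$.

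I do not expect a genuine obstacle here; the work is bookkeeping. The one place that needs care is item (f), and the longer half of (d), where one must use the slightly less transparent Penrose consequence $(\gc^\dag)^*\,\star_{\!\!\! M} \, \gc^\dag\,\star_{\!\!\! M} \, \gc=(\gc^\dag)^*$ and correctly compute adjoints of products of $\gc^{GD}$, $\gc^\dag$ and $\gc^*$ through Lemma~\ref{ppaa}, remembering that $\gc^{GD}$ is only an inner inverse, so $\gc^{GD}\,\star_{\!\!\! M} \, \gc$ may be cancelled only when it is flanked on the correct side by $\gc$ (it need not be idempotent). As a fallback, every one of these identities can be checked slicewise: under the isomorphism $\matt(\cdot)$ each $\,\star_{\!\!\! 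M} \,$ equation turns into an ordinary product of block-diagonal matrices, so on each frontal slice $(\widehat{\gc})^{(i)}$ it reduces to the corresponding known matrix identity for the GD, MP, and GD-Star inverses, which makes the verification mechanical.
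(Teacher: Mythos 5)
Your proposal is correct and follows essentially the same route as the paper: both arguments expand $\gc^{GD,*}=\gc^{GD}\,\star_{\!\!\! M} \,\gc\,\star_{\!\!\! M} \,\gc^*$ and cancel via the Penrose identities (in particular $\gc^\dag\,\star_{\!\!\! M} \,\gc\,\star_{\!\!\! M} \,\gc^*=\gc^*$, $(\gc^\dag)^*\,\star_{\!\!\! M} \,\gc^\dag\,\star_{\!\!\! M} \,\gc=(\gc^\dag)^*$) together with $\gc\,\star_{\!\!\! M} \,\gc^{GD}\,\star_{\!\!\! M} \,\gc=\gc$ and $\gc^{GD}\,\star_{\!\!\! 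M} \,\gc^{k+1}=\gc^k$, with the paper likewise invoking Theorem~\ref{T15} to get $\gc^{GD,*}\,\star_{\!\!\! M} \,(\gc^\dag)^*=\gc^{GD}\,\star_{\!\!\! M} \,\gc$ in part (f). Your version merely packages the same cancellations into a front-loaded toolkit, which makes the bookkeeping slightly cleaner but is not a different argument.
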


\begin{proof}
\textbf{(a)} Some computation shows
$$\gc\,\star_{\!\!\! M} \, \gc^{GD,*}\,\star_{\!\!\! M} \, (\gc^\dag)^*    =\gc\,\star_{\!\!\! M} \, \gc^{GD}\,\star_{\!\!\! M} \, \gc\,\star_{\!\!\! M} \, \gc^*\,\star_{\!\!\! M} \, (\gc^\dag)^*
    =\gc\,\star_{\!\!\! M} \, (\gc^\dag\,\star_{\!\!\! M} \, \gc)^*=\gc\,\star_{\!\!\! M} \, \gc^\dag\,\star_{\!\!\! M} \, \gc=\gc.$$
\textbf{(b)} It is easy to see that
$$\gc^{\dag}\,\star_{\!\!\! M} \, \gc\,\star_{\!\!\! M} \, \gc^{GD,*}
    =\gc^\dag\,\star_{\!\!\! M} \, \gc\,\star_{\!\!\! M} \, \gc^{GD}\,\star_{\!\!\! M} \, \gc\,\star_{\!\!\! M} \, \gc^*
    =\gc^\dag\,\star_{\!\!\! M} \, \gc\,\star_{\!\!\! M} \, \gc^*
    =\gc^*\,\star_{\!\!\! M} \, (\gc^*)^\dag\,\star_{\!\!\! M} \, \gc^*=\gc^*.$$
\textbf{(c)} Clearly,
\begin{eqnarray*}
  \gc^k\,\star_{\!\!\! M} \, \gc^{GD,*}\,\star_{\!\!\! M} \, (\gc^\dag)^*
    &=&\gc^k\,\star_{\!\!\! M} \, \gc^{GD}\,\star_{\!\!\! M} \, \gc\,\star_{\!\!\! M} \, (\gc^\dag\,\star_{\!\!\! M} \, \gc)^*
    =\gc^k\,\star_{\!\!\! M} \, \gc^{GD}\,\star_{\!\!\! M} \, \gc\,\star_{\!\!\! M} \, \gc^\dag\,\star_{\!\!\! M} \, \gc \\
   &=& \gc^k\,\star_{\!\!\! M} \, \gc^{GD}\,\star_{\!\!\! M} \, \gc=\gc^{k-1}\,\star_{\!\!\! M} \, \gc=\gc^k.
\end{eqnarray*}
Similarly, we can obtain $$\gc^{GD,*}\,\star_{\!\!\! M} \, (\gc^\dag)^*\,\star_{\!\!\! M} \, \gc^k
    =\gc^{GD}\,\star_{\!\!\! M} \, \gc\,\star_{\!\!\! M} \, \gc^*\,\star_{\!\!\! M} \, (\gc^\dag)^*\,\star_{\!\!\! M} \, \gc^k=\gc^{GD}\,\star_{\!\!\! M} \, \gc^{k+1}=\gc^k.$$
\textbf{(d)} Firstly, we verify the following identities:
\begin{eqnarray*}
  \gc^{\dag}\,\star_{\!\!\! M} \, \gc\,\star_{\!\!\! M} \, (\gc^{GD,*})^2 &=& \gc^{\dag}\,\star_{\!\!\! M} \, \gc\,\star_{\!\!\! M} \, \gc^{GD}\,\star_{\!\!\! M} \, \gc\,\star_{\!\!\! M} \, \gc^*\,\star_{\!\!\! M} \, \gc^{GD}\,\star_{\!\!\! M} \, \gc\,\star_{\!\!\! M} \, \gc^* \\
   &=& \gc^\dag\,\star_{\!\!\! M} \, \gc\,\star_{\!\!\! M} \, \gc^*\,\star_{\!\!\! M} \, \gc^{GD}\,\star_{\!\!\! M} \, \gc\,\star_{\!\!\! M} \, \gc^*
     =\gc^*\,\star_{\!\!\! M} \, \gc^{GD,*}.
\end{eqnarray*}
In addition, utilization of results in \textbf{(b)} leads to
\begin{align*}
  \gc^{\dag}\,\star_{\!\!\! M} \, \gc\,\star_{\!\!\! M} \, (\gc^{GD,*})^2\,\star_{\!\!\! M} \, \gc\,\star_{\!\!\! M} \, \gc^{\dag} &=\gc^*\,\star_{\!\!\! M} \, \gc^{GD,*}\,\star_{\!\!\! M} \, \gc\,\star_{\!\!\! M} \, \gc^\dag=\gc^*\,\star_{\!\!\! M} \, \gc^{GD}\,\star_{\!\!\! M} \, \gc\,\star_{\!\!\! M} \, \gc^*\,\star_{\!\!\! M} \, (\gc^*)^\dag\,\star_{\!\!\! M} \, \gc^* \\
     &=\gc^*\,\star_{\!\!\! M} \, \gc^{GD}\,\star_{\!\!\! M} \, \gc\,\star_{\!\!\! M} \, \gc^*=\gc^*\,\star_{\!\!\! M} \, \gc^{GD,*}.
\end{align*}
\textbf{(e)} This is trivial as $\gc\,\star_{\!\!\! M} \, \gc^{GD,*}=\gc\,\star_{\!\!\! M} \, \gc^{GD}\,\star_{\!\!\! M} \, \gc\,\star_{\!\!\! M} \, \gc^*=\gc\,\star_{\!\!\! M} \, \gc^*.$\\
\textbf{(f)} Results of Theorem ${\ref{T15}}$ imply $\gc^{GD,*}\,\star_{\!\!\! M} \, (\gc^\dag)^*=\gc^{GD}\,\star_{\!\!\! M} \, \gc$.
Now, one concludes
\begin{align*}
  (\gc^\dag)^*\,\star_{\!\!\! M} \, \gc^{GD,*}\,\star_{\!\!\! M} \, (\gc^\dag)^*&=(\gc^\dag)^*\,\star_{\!\!\! M} \, \gc^{GD}\,\star_{\!\!\! M} \, \gc    = (\gc^\dag\,\star_{\!\!\! M} \, \gc\,\star_{\!\!\! M} \, \gc^\dag)^*\,\star_{\!\!\! M} \, \gc^{GD}\,\star_{\!\!\! M} \, \gc \\
   &= (\gc^\dag)^*\,\star_{\!\!\! M} \, \gc^\dag\,\star_{\!\!\! M} \, \gc\,\star_{\!\!\! M} \, \gc^{GD}\,\star_{\!\!\! M} \, \gc
    =(\gc^\dag)^*\,\star_{\!\!\! M} \, \gc^\dag\,\star_{\!\!\! M} \, \gc\\
     &=(\gc^\dag)^*.
\end{align*}
\end{proof}

\begin{theorem}
For given $\gc, \hc\in\mathbb{C}^{\eta_1\times \eta_1\times \eta_3}$ and $\gc\,\star_{\!\!\! M} \, \hc=\hc\,\star_{\!\!\! M} \, \gc$ 
suppose $k=\max\{\ind(\gc), \ind(\hc)\}$. If
$$\hc\,\star_{\!\!\! M} \, \hc^{GD}\,\star_{\!\!\! M} \, \gc^{GD}=\gc^{GD},\ \ \
 \gc^{GD}\,\star_{\!\!\! M} \, \gc\,\star_{\!\!\! M} \, \hc\,\star_{\!\!\! M} \, \hc^*=\hc\,\star_{\!\!\! M} \, \hc^*\,\star_{\!\!\! M} \, \gc^{GD}\,\star_{\!\!\! M} \, \gc,$$
then $(\gc\,\star_{\!\!\! M} \, \hc)^{GD,*}=\hc^{GD,*}\,\star_{\!\!\! M} \, \gc^{GD,*}$.
\end{theorem}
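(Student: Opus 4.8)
The plan is to reduce the GD-Star reverse order law to two ingredients that are already in hand, following the same pattern as the earlier GDMP reverse-order theorems. The two ingredients are the behaviour of the conjugate transpose on the product $\gc\,\star_{\!\!\! M} \, \hc$ and the GD reverse order law $(\gc\,\star_{\!\!\! M} \, \hc)^{GD}=\hc^{GD}\,\star_{\!\!\! M} \, \gc^{GD}$. For the first, since $\gc\,\star_{\!\!\! M} \, \hc=\hc\,\star_{\!\!\! M} \, \gc$, Lemma \ref{ppaa} gives $(\gc\,\star_{\!\!\! M} \, \hc)^*=\hc^*\,\star_{\!\!\! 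M} \, \gc^*=(\hc\,\star_{\!\!\! M} \, \gc)^*=\gc^*\,\star_{\!\!\! M} \, \hc^*$, so the star of the product factors in either order. For the second, I would first record that commutativity yields the power identities $(\gc\,\star_{\!\!\! M} \, \hc)^n=\gc^n\,\star_{\!\!\! M} \, \hc^n=\hc^n\,\star_{\!\!\! M} \, \gc^n$ for all $n\ge 0$, and then verify conditions (I)--(III) of Definition \ref{d07} for $\xc=\hc^{GD}\,\star_{\!\!\! M} \, \gc^{GD}$ essentially as in the proof of Theorem \ref{T6}. The hypotheses enter only in checking $(\gc\,\star_{\!\!\! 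M} \, \hc)\,\star_{\!\!\! M} \, \xc\,\star_{\!\!\! M} \, (\gc\,\star_{\!\!\! M} \, \hc)=\gc\,\star_{\!\!\! M} \, \hc$, and here the assumption $\hc\,\star_{\!\!\! M} \, \hc^{GD}\,\star_{\!\!\! M} \, \gc^{GD}=\gc^{GD}$ makes it immediate, namely $\gc\,\star_{\!\!\! M} \, (\hc\,\star_{\!\!\! M} \, \hc^{GD}\,\star_{\!\!\! M} \, \gc^{GD})\,\star_{\!\!\! M} \, \gc\,\star_{\!\!\! M} \, \hc=\gc\,\star_{\!\!\! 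M} \, \gc^{GD}\,\star_{\!\!\! M} \, \gc\,\star_{\!\!\! M} \, \hc=\gc\,\star_{\!\!\! M} \, \hc$; the remaining two conditions need only the power identities together with $\gc^{GD}\,\star_{\!\!\! M} \, \gc^{k+1}=\gc^k$ and $\hc^{GD}\,\star_{\!\!\! M} \, \hc^{k+1}=\hc^k$ (equivalently, one may invoke the characterization in Theorem \ref{ty2}).

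Once these two preparations are done, the conclusion is a bookkeeping computation. Applying the defining formula $\gc^{GD,*}=\gc^{GD}\,\star_{\!\!\! M} \, \gc\,\star_{\!\!\! M} \, \gc^*$ to the tensor $\gc\,\star_{\!\!\! M} \, \hc$ and substituting the two identities above, I would write
\begin{align*}
(\gc\,\star_{\!\!\! M} \, \hc)^{GD,*}
&=(\gc\,\star_{\!\!\! M} \, \hc)^{GD}\,\star_{\!\!\! M} \, (\gc\,\star_{\!\!\! M} \, \hc)\,\star_{\!\!\! M} \, (\gc\,\star_{\!\!\! M} \, \hc)^*
 =\hc^{GD}\,\star_{\!\!\! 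M} \, \gc^{GD}\,\star_{\!\!\! M} \, \gc\,\star_{\!\!\! M} \, \hc\,\star_{\!\!\! M} \, \hc^*\,\star_{\!\!\! M} \, \gc^*\\
&=\hc^{GD}\,\star_{\!\!\! M} \, (\gc^{GD}\,\star_{\!\!\! M} \, \gc\,\star_{\!\!\! M} \, \hc\,\star_{\!\!\! M} \, \hc^*)\,\star_{\!\!\! M} \, \gc^*
 =\hc^{GD}\,\star_{\!\!\! M} \, \hc\,\star_{\!\!\! M} \, \hc^*\,\star_{\!\!\! M} \, \gc^{GD}\,\star_{\!\!\! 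M} \, \gc\,\star_{\!\!\! M} \, \gc^*\\
&=(\hc^{GD}\,\star_{\!\!\! M} \, \hc\,\star_{\!\!\! M} \, \hc^*)\,\star_{\!\!\! M} \, (\gc^{GD}\,\star_{\!\!\! M} \, \gc\,\star_{\!\!\! M} \, \gc^*)
 =\hc^{GD,*}\,\star_{\!\!\! M} \, \gc^{GD,*},
\end{align*}
where the equality passing to the second line of the display uses precisely the second hypothesis $\gc^{GD}\,\star_{\!\!\! M} \, \gc\,\star_{\!\!\! M} \, \hc\,\star_{\!\!\! M} \, \hc^*=\hc\,\star_{\!\!\! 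M} \, \hc^*\,\star_{\!\!\! M} \, \gc^{GD}\,\star_{\!\!\! M} \, \gc$, and every other step is just associativity of the $\,\star_{\!\!\! M} \,$ product.

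I expect the main obstacle to be the second preparatory step rather than the final computation: one must be careful that the GD reverse order law $(\gc\,\star_{\!\!\! M} \, \hc)^{GD}=\hc^{GD}\,\star_{\!\!\! M} \, \gc^{GD}$ really holds under the hypotheses as stated. Theorem \ref{T6} is formulated with the condition $\hc\,\star_{\!\!\! M} \, \hc^{GD}\,\star_{\!\!\! M} \, \gc^{GD}=\gc^{GD}\,\star_{\!\!\! M} \, \hc\,\star_{\!\!\! M} \, \hc^{GD}$, whereas here we are given the (different, sharper-looking) condition $\hc\,\star_{\!\!\! M} \, \hc^{GD}\,\star_{\!\!\! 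M} \, \gc^{GD}=\gc^{GD}$; since the GD inverse is not unique, one cannot simply commute generalized inverses past commuting tensors, so the cleanest route is to re-run the three short verifications behind Theorem \ref{T6} with the present hypothesis. As noted above this only shortens the verification of the outer identity $(\gc\,\star_{\!\!\! M} \, \hc)\,\star_{\!\!\! M} \, \xc\,\star_{\!\!\! M} \, (\gc\,\star_{\!\!\! M} \, \hc)=\gc\,\star_{\!\!\! M} \, \hc$, while the checks of $\xc\,\star_{\!\!\! M} \, (\gc\,\star_{\!\!\! M} \, \hc)^{k+1}=(\gc\,\star_{\!\!\! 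M} \, \hc)^k$ and $(\gc\,\star_{\!\!\! M} \, \hc)^{k+1}\,\star_{\!\!\! M} \, \xc=(\gc\,\star_{\!\!\! M} \, \hc)^k$ go through verbatim using only commutativity; after that, everything reduces to the routine display above.
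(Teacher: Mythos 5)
Your proposal is correct and follows essentially the same route as the paper: verify that $\xc=\hc^{GD}\,\star_{\!\!\! M} \,\gc^{GD}$ satisfies the three GD conditions for $\gc\,\star_{\!\!\! M} \,\hc$ (using the first hypothesis only for condition (I) and commutativity for the other two), then conclude via $(\gc\,\star_{\!\!\! M} \,\hc)^{GD,*}=(\gc\,\star_{\!\!\! M} \,\hc)^{GD}\,\star_{\!\!\! M} \,\gc\,\star_{\!\!\! M} \,\hc\,\star_{\!\!\! M} \,\hc^*\,\star_{\!\!\! M} \,\gc^*$ and the second hypothesis. Your explicit caution about not importing Theorem \ref{T6} wholesale (since the hypothesis here is $\hc\,\star_{\!\!\! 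M} \,\hc^{GD}\,\star_{\!\!\! M} \,\gc^{GD}=\gc^{GD}$ rather than the symmetric condition) is well placed and matches what the paper actually does.
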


\begin{proof}
The tensor $\xc=\hc^{GD}\,\star_{\!\!\! M} \, \gc^{GD}$ satisfies
\begin{align*}
 \gc\,\star_{\!\!\! M} \, \hc\,\star_{\!\!\! M} \, \xc\,\star_{\!\!\! M} \, \gc\,\star_{\!\!\! M} \, \hc= \gc\,\star_{\!\!\! M} \, \hc\,\star_{\!\!\! M} \, \hc^{GD}\,\star_{\!\!\! M} \, \gc^{GD}\,\star_{\!\!\! M} \, \gc\,\star_{\!\!\! M} \, \hc
 =\gc\,\star_{\!\!\! M} \, \gc^{GD}\,\star_{\!\!\! M} \, \gc\,\star_{\!\!\! M} \, \hc
 =\gc\,\star_{\!\!\! M} \, \hc
\end{align*}
and
\begin{align*}
 \xc\,\star_{\!\!\! M} \, (\gc\,\star_{\!\!\! M} \, \hc)^{k+1}&=\hc^{GD}\,\star_{\!\!\! M} \, \gc^{GD}\,\star_{\!\!\! M} \, \gc^{k+1}\,\star_{\!\!\! M} \, \hc^{k+1}
 =\hc^{GD}\,\star_{\!\!\! M} \, \gc^k\,\star_{\!\!\! M} \, \hc^{k+1}\\
 &=\hc^{GD}\,\star_{\!\!\! M} \, \hc^{k+1}\,\star_{\!\!\! M} \, \gc^k
 =\hc^k\,\star_{\!\!\! M} \, \gc^k
 =(\gc\,\star_{\!\!\! M} \, \hc)^k.
\end{align*}
Similarly, $(\gc\,\star_{\!\!\! M} \, \hc)^{k+1}\,\star_{\!\!\! M} \, \xc=(\gc\,\star_{\!\!\! M} \, \hc)^k$, Hence, $(\gc\,\star_{\!\!\! M} \, \hc)^{GD}=\hc^{GD}\,\star_{\!\!\! M} \, \gc^{GD}$.
Then,
\begin{align*}
 (\gc\,\star_{\!\!\! M} \, \hc)^{GD,*}&=(\gc\,\star_{\!\!\! M} \, \hc)^{GD}\,\star_{\!\!\! M} \, \gc\,\star_{\!\!\! M} \, \hc\,\star_{\!\!\! M} \, (\gc\,\star_{\!\!\! M} \, \hc)^*\\
 &=\hc^{GD}\,\star_{\!\!\! M} \, \gc^{GD}\,\star_{\!\!\! M} \, \gc\,\star_{\!\!\! M} \, \hc\,\star_{\!\!\! M} \, \hc^*\,\star_{\!\!\! M} \, \gc^*\\
 &=\hc^{GD}\,\star_{\!\!\! M} \, \hc\,\star_{\!\!\! M} \, \hc^*\,\star_{\!\!\! M} \, \gc^{GD}\,\star_{\!\!\! M} \, \gc\,\star_{\!\!\! M} \, \gc^*\\
 &=\hc^{GD,*}\,\star_{\!\!\! M} \, \gc^{GD,*}.
\end{align*}
\end{proof}

\begin{theorem}
Let $\gc, \hc\in\mathbb{C}^{\eta_1\times \eta_1\times \eta_3}$ and $\gc\,\star_{\!\!\! M} \, \hc=\hc\,\star_{\!\!\! M} \, \gc$ and satisfy 
 $k=\max\{\ind(\gc), \ind(\hc)\}$. If
$$ \gc\,\star_{\!\!\! M} \, \gc^{GD}\,\star_{\!\!\! M} \, \hc^{GD}=\hc^{GD},\ \ \
 \hc^{GD}\,\star_{\!\!\! M} \, \hc\,\star_{\!\!\! M} \, \gc\,\star_{\!\!\! M} \, \gc^*=\gc\,\star_{\!\!\! M} \, \gc^*\,\star_{\!\!\! M} \, \hc^{GD}\,\star_{\!\!\! M} \, \hc,$$
then $(\gc\,\star_{\!\!\! M} \, \hc)^{GD,*}=\gc^{GD,*}\,\star_{\!\!\! M} \, \hc^{GD,*}$.
\end{theorem}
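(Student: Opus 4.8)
The plan is to prove the statement in two moves: first establish the forward-order law for the GD inverse itself, namely $(\gc\,\star_{\!\!\! M} \, \hc)^{GD}=\gc^{GD}\,\star_{\!\!\! M} \, \hc^{GD}$, and then substitute this into the defining formula $\gc^{GD,*}=\gc^{GD}\,\star_{\!\!\! M} \, \gc\,\star_{\!\!\! M} \, \gc^*$ and reorganize the resulting product, in the spirit of the preceding theorem.

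For the first move I would set $\xc=\gc^{GD}\,\star_{\!\!\! M} \, \hc^{GD}$ and verify the three defining identities of Definition \ref{d07} for $\gc\,\star_{\!\!\! M} \, \hc$. The commutativity $\gc\,\star_{\!\!\! M} \, \hc=\hc\,\star_{\!\!\! M} \, \gc$ gives $(\gc\,\star_{\!\!\! M} \, \hc)^{n}=\gc^{n}\,\star_{\!\!\! M} \, \hc^{n}=\hc^{n}\,\star_{\!\!\! M} \, \gc^{n}$ and lets powers of $\gc$ and $\hc$ be interchanged freely. Identity (I) follows by rewriting $(\gc\,\star_{\!\!\! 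M} \, \hc)\,\star_{\!\!\! M} \, \xc\,\star_{\!\!\! M} \, (\gc\,\star_{\!\!\! M} \, \hc)=\hc\,\star_{\!\!\! M} \, (\gc\,\star_{\!\!\! M} \, \gc^{GD}\,\star_{\!\!\! M} \, \hc^{GD})\,\star_{\!\!\! M} \, \gc\,\star_{\!\!\! M} \, \hc$ and then applying the hypothesis $\gc\,\star_{\!\!\! M} \, \gc^{GD}\,\star_{\!\!\! M} \, \hc^{GD}=\hc^{GD}$ together with $\hc\,\star_{\!\!\! M} \, \hc^{GD}\,\star_{\!\!\! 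M} \, \hc=\hc$. For identity (II) I would use $(\gc\,\star_{\!\!\! M} \, \hc)^{k+1}=\hc^{k+1}\,\star_{\!\!\! M} \, \gc^{k+1}$ together with $\hc^{GD}\,\star_{\!\!\! M} \, \hc^{k+1}=\hc^{k}$ and $\gc^{GD}\,\star_{\!\!\! M} \, \gc^{k+1}=\gc^{k}$. Identity (III) is the delicate one: instead of expanding $(\gc\,\star_{\!\!\! M} \, \hc)^{k+1}=\gc^{k+1}\,\star_{\!\!\! M} \, \hc^{k+1}$ (which does not allow $\gc^{GD}$ to be absorbed), I would split off the last factor, $(\gc\,\star_{\!\!\! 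M} \, \hc)^{k+1}=(\gc\,\star_{\!\!\! M} \, \hc)^{k}\,\star_{\!\!\! M} \, (\hc\,\star_{\!\!\! M} \, \gc)$, and peel the block $\gc\,\star_{\!\!\! M} \, \gc^{GD}\,\star_{\!\!\! M} \, \hc^{GD}=\hc^{GD}$ off the right, obtaining $(\gc\,\star_{\!\!\! M} \, \hc)^{k+1}\,\star_{\!\!\! M} \, \xc=(\gc\,\star_{\!\!\! M} \, \hc)^{k}\,\star_{\!\!\! M} \, \hc\,\star_{\!\!\! M} \, \hc^{GD}=\gc^{k}\,\star_{\!\!\! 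M} \, \hc^{k+1}\,\star_{\!\!\! M} \, \hc^{GD}=(\gc\,\star_{\!\!\! M} \, \hc)^{k}$. This yields $(\gc\,\star_{\!\!\! M} \, \hc)^{GD}=\gc^{GD}\,\star_{\!\!\! M} \, \hc^{GD}$ (alternatively, if the hypotheses of Theorem \ref{tt7} are shown to hold, it may be quoted directly).

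For the second move, by the definition of the GD-Star inverse, the identity just obtained, and Lemma \ref{ppaa},
\begin{align*}
(\gc\,\star_{\!\!\! M} \, \hc)^{GD,*}&=(\gc\,\star_{\!\!\! M} \, \hc)^{GD}\,\star_{\!\!\! M} \, (\gc\,\star_{\!\!\! M} \, \hc)\,\star_{\!\!\! M} \, (\gc\,\star_{\!\!\! M} \, \hc)^{*}\\
&=\gc^{GD}\,\star_{\!\!\! M} \, \hc^{GD}\,\star_{\!\!\! M} \, \gc\,\star_{\!\!\! M} \, \hc\,\star_{\!\!\! M} \, \hc^{*}\,\star_{\!\!\! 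M} \, \gc^{*}.
\end{align*}
Taking conjugate transpose in $\gc\,\star_{\!\!\! M} \, \hc=\hc\,\star_{\!\!\! M} \, \gc$ gives $\hc^{*}\,\star_{\!\!\! M} \, \gc^{*}=\gc^{*}\,\star_{\!\!\! M} \, \hc^{*}$; combining this with $\gc\,\star_{\!\!\! M} \, \hc=\hc\,\star_{\!\!\! M} \, \gc$ rewrites the product as $\gc^{GD}\,\star_{\!\!\! M} \, \hc^{GD}\,\star_{\!\!\! M} \, \hc\,\star_{\!\!\! M} \, \gc\,\star_{\!\!\! M} \, \gc^{*}\,\star_{\!\!\! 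M} \, \hc^{*}$. Now the second hypothesis $\hc^{GD}\,\star_{\!\!\! M} \, \hc\,\star_{\!\!\! M} \, \gc\,\star_{\!\!\! M} \, \gc^{*}=\gc\,\star_{\!\!\! M} \, \gc^{*}\,\star_{\!\!\! M} \, \hc^{GD}\,\star_{\!\!\! M} \, \hc$ moves the block $\gc\,\star_{\!\!\! M} \, \gc^{*}$ to the left, and the expression regroups as $(\gc^{GD}\,\star_{\!\!\! M} \, \gc\,\star_{\!\!\! M} \, \gc^{*})\,\star_{\!\!\! M} \, (\hc^{GD}\,\star_{\!\!\! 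M} \, \hc\,\star_{\!\!\! M} \, \hc^{*})=\gc^{GD,*}\,\star_{\!\!\! M} \, \hc^{GD,*}$, which is the assertion.

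I expect the main obstacle to be the first move, specifically verifying identity (III): the hypothesis $\gc\,\star_{\!\!\! M} \, \gc^{GD}\,\star_{\!\!\! M} \, \hc^{GD}=\hc^{GD}$ is one-sided in the GD factors, so the naive expansion of $(\gc\,\star_{\!\!\! M} \, \hc)^{k+1}\,\star_{\!\!\! M} \, \gc^{GD}\,\star_{\!\!\! M} \, \hc^{GD}$ does not permit $\gc^{GD}$ to be absorbed; the absorption must be arranged from the right end after commuting $\gc$ and $\hc$. Once the GD forward-order law is secured, the second move is only bookkeeping with $\,\star_{\!\!\! M} \, $, mirroring the computation in the preceding theorem.
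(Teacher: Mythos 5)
Your proposal is correct and follows essentially the same route as the paper: first verify that $\gc^{GD}\,\star_{\!\!\! M} \,\hc^{GD}$ satisfies the defining equations of a GD inverse of $\gc\,\star_{\!\!\! M} \,\hc$, then expand $(\gc\,\star_{\!\!\! M} \,\hc)^{GD,*}$ and use the conjugate-transposed commutativity together with the second hypothesis to regroup the factors into $\gc^{GD,*}\,\star_{\!\!\! M} \,\hc^{GD,*}$. In fact your treatment of the third GD equation—splitting $(\gc\,\star_{\!\!\! M} \,\hc)^{k+1}=(\gc\,\star_{\!\!\! M} \,\hc)^{k}\,\star_{\!\!\! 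M} \,\hc\,\star_{\!\!\! M} \,\gc$ and absorbing $\gc\,\star_{\!\!\! M} \,\gc^{GD}\,\star_{\!\!\! M} \,\hc^{GD}=\hc^{GD}$ from the right—supplies the detail the paper dismisses with ``Similarly.''
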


\begin{proof}
Set $\xc=\gc^{GD}\,\star_{\!\!\! M} \, \hc^{GD}$. Now,
\begin{align*}
 \gc\,\star_{\!\!\! M} \, \hc\,\star_{\!\!\! M} \, \xc\,\star_{\!\!\! M} \, \gc\,\star_{\!\!\! M} \, \hc&= \gc\,\star_{\!\!\! M} \, \hc\,\star_{\!\!\! M} \, \gc^{GD}\,\star_{\!\!\! M} \, \hc^{GD}\,\star_{\!\!\! M} \, \gc\,\star_{\!\!\! M} \, \hc
 =\hc\,\star_{\!\!\! M} \, \gc\,\star_{\!\!\! M} \, \gc^{GD}\,\star_{\!\!\! M} \, \hc^{GD}\,\star_{\!\!\! M} \, \gc\,\star_{\!\!\! M} \, \hc\\
 &=\hc\,\star_{\!\!\! M} \, \hc^{GD}\,\star_{\!\!\! M} \, \gc\,\star_{\!\!\! M} \, \hc
 =\hc\,\star_{\!\!\! M} \, \hc^{GD}\,\star_{\!\!\! M} \, \hc\,\star_{\!\!\! M} \, \gc
 =\hc\,\star_{\!\!\! M} \, \gc=\gc\,\star_{\!\!\! M} \,\hc
\end{align*}
and
\begin{align*}
 \xc\,\star_{\!\!\! M} \, (\gc\,\star_{\!\!\! M} \, \hc)^{k+1}&=\gc^{GD}\,\star_{\!\!\! M} \, \hc^{GD}\,\star_{\!\!\! M} \, \gc^{k+1}\,\star_{\!\!\! M} \, \hc^{k+1}
 =\gc^{GD}\,\star_{\!\!\! M} \, \hc^{GD}\,\star_{\!\!\! M} \, \hc^{k+1}\,\star_{\!\!\! M} \, \gc^{k+1}\\
 &=\gc^{GD}\,\star_{\!\!\! M} \, \hc^{k}\,\star_{\!\!\! M} \, \gc^{k+1}
  =\gc^{GD}\,\star_{\!\!\! M} \, \gc^{k+1}\,\star_{\!\!\! M} \, \hc^{k}
 =\gc^k\,\star_{\!\!\! M} \, \hc^k
 =(\gc\,\star_{\!\!\! M} \, \hc)^k.
\end{align*}
Similarly, $(\gc\,\star_{\!\!\! M} \, \hc)^{k+1}\,\star_{\!\!\! M} \, \xc=(\gc\,\star_{\!\!\! M} \, \hc)^k$, Hence, $(\gc\,\star_{\!\!\! M} \, \hc)^{GD}=\gc^{GD}\,\star_{\!\!\! M} \, \hc^{GD}$.
Then,
\begin{align*}
 (\gc\,\star_{\!\!\! M} \, \hc)^{GD,*}&=(\gc\,\star_{\!\!\! M} \, \hc)^{GD}\,\star_{\!\!\! M} \, \gc\,\star_{\!\!\! M} \, \hc\,\star_{\!\!\! M} \, (\gc\,\star_{\!\!\! M} \, \hc)^*
 =\gc^{GD}\,\star_{\!\!\! M} \, \hc^{GD}\,\star_{\!\!\! M} \, \hc\,\star_{\!\!\! M} \, \gc\,\star_{\!\!\! M} \, \hc^*\,\star_{\!\!\! M} \, \gc^*\\
 &=\gc^{GD}\,\star_{\!\!\! M} \, \hc^{GD}\,\star_{\!\!\! M} \, \hc\,\star_{\!\!\! M} \, \gc\,\star_{\!\!\! M} \, \gc^*\,\star_{\!\!\! M} \, \hc^*
 =\gc^{GD}\,\star_{\!\!\! M} \, \gc\,\star_{\!\!\! M} \, \gc^*\,\star_{\!\!\! M} \, \hc^{GD}\,\star_{\!\!\! M} \, \hc\,\star_{\!\!\! M} \, \hc^*
 =\gc^{GD,*}\,\star_{\!\!\! M} \, \hc^{GD,*}.
\end{align*}
\end{proof}

\begin{theorem}
Let $\gc, \hc\in\mathbb{C}^{\eta_1\times \eta_1\times \eta_3}$ and  $\gc\,\star_{\!\!\! M} \, \hc=\hc\,\star_{\!\!\! M} \, \gc=\hc\,\star_{\!\!\! M} \, \gc^*=\mathcal{O}$. Suppose  $k=\max\{\ind(\gc), \ind(\hc)\}$.  If
$$\gc^{GD}\,\star_{\!\!\! M} \, \hc=\hc\,\star_{\!\!\! M} \, \gc^{GD}=\mathcal{O}, \ \ \  \hc^{GD}\,\star_{\!\!\! M} \, \gc=\gc\,\star_{\!\!\! M} \, \hc^{GD}=\mathcal{O},$$
then $(\gc+\hc)^{GD,*}=\gc^{GD,*}+\hc^{GD,*}$.
\end{theorem}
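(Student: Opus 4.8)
The plan is to imitate the two preceding additivity theorems and reduce everything to the definition $\gc^{GD,*}=\gc^{GD}\,\star_{\!\!\! M} \, \gc\,\star_{\!\!\! M} \, \gc^*$ together with the additivity of the GD inverse already established in Theorem~\ref{tt8}. First I would observe that the hypotheses $\gc\,\star_{\!\!\! M} \, \hc=\hc\,\star_{\!\!\! M} \, \gc=\mathcal{O}$ and $\gc^{GD}\,\star_{\!\!\! M} \, \hc=\hc\,\star_{\!\!\! M} \, \gc^{GD}=\mathcal{O}$, $\hc^{GD}\,\star_{\!\!\! M} \, \gc=\gc\,\star_{\!\!\! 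M} \, \hc^{GD}=\mathcal{O}$ are precisely the assumptions of Theorem~\ref{tt8}, so $(\gc+\hc)^{GD}=\gc^{GD}+\hc^{GD}$. Since the conjugate transpose acts slicewise, it is additive, hence also $(\gc+\hc)^*=\gc^*+\hc^*$.

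Next I would substitute these into $(\gc+\hc)^{GD,*}=(\gc+\hc)^{GD}\,\star_{\!\!\! M} \, (\gc+\hc)\,\star_{\!\!\! M} \, (\gc+\hc)^*$ and expand using bilinearity and associativity of $\star_{\!\!\! M}$ (the lemma in Section~\ref{two}). Multiplying out $(\gc^{GD}+\hc^{GD})\,\star_{\!\!\! M} \, (\gc+\hc)$ kills the cross terms $\gc^{GD}\,\star_{\!\!\! M} \, \hc$ and $\hc^{GD}\,\star_{\!\!\! M} \, \gc$ by hypothesis, leaving $\gc^{GD}\,\star_{\!\!\! M} \, \gc+\hc^{GD}\,\star_{\!\!\! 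M} \, \hc$; multiplying this on the right by $\gc^*+\hc^*$ produces four terms, of which $\gc^{GD}\,\star_{\!\!\! M} \, \gc\,\star_{\!\!\! M} \, \gc^*=\gc^{GD,*}$ and $\hc^{GD}\,\star_{\!\!\! M} \, \hc\,\star_{\!\!\! M} \, \hc^*=\hc^{GD,*}$ are the terms to be retained. It then remains to show the two mixed terms $\gc^{GD}\,\star_{\!\!\! M} \, \gc\,\star_{\!\!\! M} \, \hc^*$ and $\hc^{GD}\,\star_{\!\!\! M} \, \hc\,\star_{\!\!\! M} \, \gc^*$ vanish.

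This is the only place the hypothesis $\hc\,\star_{\!\!\! M} \, \gc^*=\mathcal{O}$ enters. Directly, $\hc^{GD}\,\star_{\!\!\! M} \, \hc\,\star_{\!\!\! M} \, \gc^*=\hc^{GD}\,\star_{\!\!\! M} \, (\hc\,\star_{\!\!\! M} \, \gc^*)=\mathcal{O}$. For the other term I would apply Lemma~\ref{ppaa} to $\hc\,\star_{\!\!\! M} \, \gc^*=\mathcal{O}$: taking conjugate transposes gives $\gc\,\star_{\!\!\! M} \, \hc^*=(\hc\,\star_{\!\!\! M} \, \gc^*)^*=\mathcal{O}$, whence $\gc^{GD}\,\star_{\!\!\! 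M} \, \gc\,\star_{\!\!\! M} \, \hc^*=\gc^{GD}\,\star_{\!\!\! M} \, (\gc\,\star_{\!\!\! M} \, \hc^*)=\mathcal{O}$. Collecting, $(\gc+\hc)^{GD,*}=\gc^{GD,*}+\hc^{GD,*}$, as claimed.

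I do not expect a genuine obstacle: the argument is essentially bookkeeping of which products vanish, and the only mildly nonroutine step is the transpose trick that upgrades $\hc\,\star_{\!\!\! M} \, \gc^*=\mathcal{O}$ to $\gc\,\star_{\!\!\! M} \, \hc^*=\mathcal{O}$. One should be careful to invoke associativity of $\star_{\!\!\! M}$ to justify each regrouping, and to keep in mind that $\gc^{GD}$ and $\hc^{GD}$ are fixed but arbitrary elements of $\gc\{GD\}$ and $\hc\{GD\}$, so the identity is to be read in the set-valued sense consistent with the definition of $\{GD,*\}$.
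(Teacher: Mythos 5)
Your proposal is correct and follows essentially the same route as the paper: invoke the GD-additivity result, expand $(\gc^{GD}+\hc^{GD})\,\star_{\!\!\! M} \,(\gc+\hc)\,\star_{\!\!\! M} \,(\gc^*+\hc^*)$, and kill the cross terms using the stated hypotheses. If anything, your write-up is slightly more careful than the paper's, which leaves implicit the step you make explicit via Lemma~\ref{ppaa}, namely that $\hc\,\star_{\!\!\! M} \,\gc^*=\mathcal{O}$ yields $\gc\,\star_{\!\!\! M} \,\hc^*=\mathcal{O}$ and hence the vanishing of $\gc^{GD}\,\star_{\!\!\! M} \,\gc\,\star_{\!\!\! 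M} \,\hc^*$.
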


\begin{proof}
Applying the representation of the GD-Star inverse, it can be obtained
\begin{align*}
  (\gc+\hc)^{GD}\,\star_{\!\!\! M} \, (\gc+\hc)\,\star_{\!\!\! M} \, (\gc+\hc)^*&=(\gc^{GD}+\hc^{GD})\,\star_{\!\!\! M} \, (\gc\,\star_{\!\!\! M} \, \gc^*+\gc\,\star_{\!\!\! M} \, \hc^*+\hc\,\star_{\!\!\! M} \, \gc^*+\hc\,\star_{\!\!\! M} \, \hc^*)\\
  &=\gc^{GD}\,\star_{\!\!\! M} \, \gc\,\star_{\!\!\! M} \, \gc^*+\gc^{GD}\,\star_{\!\!\! M} \, \gc\,\star_{\!\!\! M} \, \hc^*\\
  &+\gc^{GD}\,\star_{\!\!\! M} \, \hc\,\star_{\!\!\! M} \, \gc^*+\gc^{GD}\,\star_{\!\!\! M} \, \hc\,\star_{\!\!\! M} \, \hc^*+\hc^{GD}\,\star_{\!\!\! M} \, \gc\,\star_{\!\!\! M} \, \gc^*\\
  &+\hc^{GD}\,\star_{\!\!\! M} \, \gc\,\star_{\!\!\! M} \, \hc^*+\hc^{GD}\,\star_{\!\!\! M} \, \hc\,\star_{\!\!\! M} \, \gc^*+\hc^{GD}\,\star_{\!\!\! M} \, \hc\,\star_{\!\!\! M} \, \hc^*\\
  &=\gc^{GD,*}+\hc^{GD,*}.
\end{align*}
The verification is complete.
\end{proof}

\section{Applications to Multilinear Systems}

In this section we investigate solutions of multilinear systems in terms of the GD inverse, the GDMP inverse and the GD-Star inverse.

\begin{theorem}
Let $\gc\in\mathbb{C}^{\eta_1\times \eta_1\times \eta_3}$ and  $\hc\in\mathbb{C}^{\eta_1\times 1\times \eta_3}$ be given and suppose  $\ind(\gc)=k$.
Then,
\begin{equation}\label{zzgg}
\gc\,\star_{\!\!\! M} \, \xc=\gc\,\star_{\!\!\! M} \, \gc^{GD}\,\star_{\!\!\! M} \, \hc,
\end{equation}
is solvable and $\xc=\gc^{GD}\,\star_{\!\!\! M} \, \hc+(\mathcal{I}-\gc^{GD}\,\star_{\!\!\! M} \, \gc)\,\star_{\!\!\! M} \, \mathcal{Z}$ is solution of (\ref{zzgg}) for arbitrary $\mathcal{Z}\in\mathbb{C}^{\eta_1\times 1\times \eta_3}$.
\begin{proof}
Clearly, $\gc^{GD}\,\star_{\!\!\! M} \, \hc$ is a solution of (\ref{zzgg}).
Hence, the multilinear system (\ref{zzgg}) is consistent. Now,
$$
\gc\,\star_{\!\!\! M} \, \xc =\gc\,\star_{\!\!\! M} \, \gc^{GD} \,\star_{\!\!\! M} \, \hc+\gc\,\star_{\!\!\! M} \, (\mathcal{I}-\gc^{GD}\,\star_{\!\!\! M} \, \gc)\,\star_{\!\!\! M} \, \mathcal{Z}=\gc\,\star_{\!\!\! M} \, \gc^{GD}\,\star_{\!\!\! M} \, \hc.
$$
Moreover, arbitrary solution $\xc$ to (\ref{zzgg}) is generically defined by
$$\xc=\gc^{GD}\,\star_{\!\!\! M} \, \hc+(\mathcal{I}-\gc^{GD}\,\star_{\!\!\! M} \, \gc)\,\star_{\!\!\! M} \, \xc.$$
So,arbitrary solution to (\ref{zzgg}) is determined as
$$\xc=\gc^{GD}\,\star_{\!\!\! M} \, \hc+(\mathcal{I}-\gc^{GD}\,\star_{\!\!\! M} \, \gc)\,\star_{\!\!\! M} \, \mathcal{Z}.$$
\end{proof}

\begin{theorem}
Let $\gc\in\mathbb{C}^{\eta_1\times \eta_1\times \eta_3}$ and  $\hc\in\mathbb{C}^{\eta_1\times 1\times \eta_3}$. Suppose  $\ind(\gc)=k$.  Then, for arbitrary $\mathcal{Z}\in\mathbb{C}^{\eta_1\times 1\times \eta_3}$,
\begin{equation}\label{zzyy}
\gc\,\star_{\!\!\! M} \, \xc=\gc\,\star_{\!\!\! M} \, \gc^\dag\,\star_{\!\!\! M} \, \hc,
\end{equation}
is solvable and $\xc=\gc^{GD,\dag}\,\star_{\!\!\! M} \, \hc+(\mathcal{I}-\gc^{GD}\,\star_{\!\!\! M} \, \gc)\,\star_{\!\!\! M} \, \mathcal{Z}$ is  one of the solutions of (\ref{zzyy}).
\begin{proof}
Clearly, $\gc^{GD}\,\star_{\!\!\! M} \, \gc\,\star_{\!\!\! M} \, \gc^\dag\,\star_{\!\!\! M} \, \hc$ satisfies (\ref{zzyy}). Hence, the system (\ref{zzyy}) is consistent. Now,
$$\gc\,\star_{\!\!\! M} \, \xc
=\gc\,\star_{\!\!\! M} \, \gc^{GD,\dag}
\,\star_{\!\!\! M} \, \hc+\gc\,\star_{\!\!\! M} \, (\mathcal{I}-\gc^{GD}\,\star_{\!\!\! M} \, \gc)\,\star_{\!\!\! M} \, \mathcal{Z}=\gc\,\star_{\!\!\! M} \, \gc^\dag\,\star_{\!\!\! M} \, \hc.
$$
Moreover, if $\xc$ satisies (\ref{zzyy}), then
$$\xc=\gc^{GD,\dag}\,\star_{\!\!\! M} \, \hc+(\mathcal{I}-\gc^{GD}\,\star_{\!\!\! M} \, \gc)\,\star_{\!\!\! M} \, \xc.$$
So, the generic solution to (\ref{zzyy}) is given by
$$\xc=\gc^{GD,\dag}\,\star_{\!\!\! M} \, \hc+(\mathcal{I}-\gc^{GD}\,\star_{\!\!\! M} \, \gc)\,\star_{\!\!\! M} \, \mathcal{Z}.$$
\end{proof}

\end{theorem}

\end{theorem}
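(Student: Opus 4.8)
The plan is to verify the asserted formula directly, reducing everything to the associativity and distributivity of the M-product together with the defining identities of $\gc^\dag$ and $\gc^{GD}$; consistency of the system will come out as a byproduct.

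First I would record the basic rewrite $\gc\,\star_{\!\!\! M} \, \gc^{GD,\dag}\,\star_{\!\!\! M} \, \hc = (\gc\,\star_{\!\!\! M} \, \gc^{GD}\,\star_{\!\!\! M} \, \gc)\,\star_{\!\!\! M} \, \gc^\dag\,\star_{\!\!\! M} \, \hc = \gc\,\star_{\!\!\! M} \, \gc^\dag\,\star_{\!\!\! M} \, \hc$, which uses the definition $\gc^{GD,\dag} = \gc^{GD}\,\star_{\!\!\! M} \, \gc\,\star_{\!\!\! M} \, \gc^\dag$ together with property (I) of Definition \ref{d07} applied to $\gc^{GD}$, i.e.\ $\gc\,\star_{\!\!\! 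M} \, \gc^{GD}\,\star_{\!\!\! M} \, \gc = \gc$. In particular $\gc^{GD,\dag}\,\star_{\!\!\! M} \, \hc$ is itself a solution, so \eqref{zzyy} is consistent.

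Next I would substitute the candidate $\xc = \gc^{GD,\dag}\,\star_{\!\!\! M} \, \hc + (\mathcal{I} - \gc^{GD}\,\star_{\!\!\! M} \, \gc)\,\star_{\!\!\! M} \, \mathcal{Z}$. Expanding by left-distributivity and associativity of the M-product gives $\gc\,\star_{\!\!\! M} \, \xc = \gc\,\star_{\!\!\! M} \, \gc^{GD,\dag}\,\star_{\!\!\! M} \, \hc + (\gc - \gc\,\star_{\!\!\! M} \, \gc^{GD}\,\star_{\!\!\! M} \, \gc)\,\star_{\!\!\! M} \, \mathcal{Z}$. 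The factor $\gc - \gc\,\star_{\!\!\! M} \, \gc^{GD}\,\star_{\!\!\! M} \, \gc$ equals $\mathcal{O}$ by property (I) again, so the $\mathcal{Z}$-summand vanishes irrespective of $\mathcal{Z}$, and combining with the rewrite above yields $\gc\,\star_{\!\!\! M} \, \xc = \gc\,\star_{\!\!\! M} \, \gc^\dag\,\star_{\!\!\! M} \, \hc$; hence every member of the family solves \eqref{zzyy}. To see that the family is exhaustive, I would take any $\xc$ with $\gc\,\star_{\!\!\! M} \, \xc = \gc\,\star_{\!\!\! M} \, \gc^\dag\,\star_{\!\!\! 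M} \, \hc$ and multiply on the left by $\gc^{GD}$, obtaining $\gc^{GD}\,\star_{\!\!\! M} \, \gc\,\star_{\!\!\! M} \, \xc = \gc^{GD}\,\star_{\!\!\! M} \, \gc\,\star_{\!\!\! M} \, \gc^\dag\,\star_{\!\!\! M} \, \hc = \gc^{GD,\dag}\,\star_{\!\!\! M} \, \hc$; consequently $\xc = \gc^{GD,\dag}\,\star_{\!\!\! M} \, \hc + (\mathcal{I} - \gc^{GD}\,\star_{\!\!\! M} \, \gc)\,\star_{\!\!\! M} \, \xc$, which is precisely the stated form with $\mathcal{Z} = \xc$.

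I do not expect a genuine obstacle here: the whole argument is a short chain of rewrites relying only on the elementary associativity/distributivity laws of the M-product, the equations defining $\gc^\dag$, and the single identity $\gc\,\star_{\!\!\! M} \, \gc^{GD}\,\star_{\!\!\! M} \, \gc = \gc$. The one point worth stating explicitly is that, since the GD and GDMP inverses are not unique, the \emph{same} fixed choice of $\gc^{GD} \in \gc\{GD\}$ must be used both inside $\gc^{GD,\dag}$ and in the projector $\mathcal{I} - \gc^{GD}\,\star_{\!\!\! M} \, \gc$; the conclusion then holds for any such fixed choice.
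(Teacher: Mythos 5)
Your proposal is correct and follows essentially the same route as the paper: exhibit the particular solution, use $\gc\,\star_{\!\!\! M} \, \gc^{GD}\,\star_{\!\!\! M} \, \gc=\gc$ to annihilate the $(\mathcal{I}-\gc^{GD}\,\star_{\!\!\! M} \, \gc)\,\star_{\!\!\! M} \,\mathcal{Z}$ term, and recover an arbitrary solution by taking $\mathcal{Z}=\xc$. You in fact supply two details the paper leaves implicit --- the left-multiplication by $\gc^{GD}$ that justifies the exhaustiveness step, and the remark that the same fixed $\gc^{GD}\in\gc\{GD\}$ must be used throughout --- and the identical argument covers the companion equation $\gc\,\star_{\!\!\! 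M} \, \xc=\gc\,\star_{\!\!\! M} \, \gc^{GD}\,\star_{\!\!\! M} \, \hc$, which your write-up does not treat explicitly.
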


\begin{theorem}\label{to}
Let $\gc\in\mathbb{C}^{\eta_1\times \eta_1\times \eta_3}$ and  $\hc\in\mathbb{C}^{\eta_1\times 1\times \eta_3}$ be given and suppose  $\ind(\gc)=k$.
Then,
\begin{equation}\label{zzxx}
\gc\,\star_{\!\!\! M} \, \xc=\gc\,\star_{\!\!\! M} \, \gc^*\,\star_{\!\!\! M} \, \hc,
\end{equation}
is solvable and $\xc=\gc^{GD,*}\,\star_{\!\!\! M} \, \hc+(\mathcal{I}-\gc^{GD}\,\star_{\!\!\! M} \, \gc)\,\star_{\!\!\! M} \, \mathcal{Z}$ satisfies (\ref{zzxx})
for arbitrary $\mathcal{Z}\in\mathbb{C}^{\eta_1\times 1\times \eta_3}$.
\end{theorem}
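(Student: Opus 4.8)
The plan is to mirror the arguments used for the GD and GDMP solutions in the two preceding theorems, with the GD-Star inverse in place of the relevant inverse.

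First I would record that (\ref{zzxx}) is consistent: by associativity of the M-product, the tensor $\gc^*\,\star_{\!\!\! M} \, \hc$ satisfies $\gc\,\star_{\!\!\! M} \, (\gc^*\,\star_{\!\!\! M} \, \hc)=\gc\,\star_{\!\!\! M} \, \gc^*\,\star_{\!\!\! M} \, \hc$, so a solution exists. Moreover $\gc^{GD,*}\,\star_{\!\!\! M} \, \hc$ is itself a solution: substituting the definition $\gc^{GD,*}=\gc^{GD}\,\star_{\!\!\! M} \, \gc\,\star_{\!\!\! M} \, \gc^*$ and applying property (I) of Definition \ref{d07}, i.e. $\gc\,\star_{\!\!\! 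M} \, \gc^{GD}\,\star_{\!\!\! M} \, \gc=\gc$, yields $\gc\,\star_{\!\!\! M} \, \gc^{GD,*}\,\star_{\!\!\! M} \, \hc=\gc\,\star_{\!\!\! M} \, \gc^{GD}\,\star_{\!\!\! M} \, \gc\,\star_{\!\!\! M} \, \gc^*\,\star_{\!\!\! M} \, \hc=\gc\,\star_{\!\!\! M} \, \gc^*\,\star_{\!\!\! M} \, \hc$.

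Next I would verify that the whole family $\xc=\gc^{GD,*}\,\star_{\!\!\! M} \, \hc+(\mathcal{I}-\gc^{GD}\,\star_{\!\!\! M} \, \gc)\,\star_{\!\!\! M} \, \mathcal{Z}$ solves (\ref{zzxx}) for every $\mathcal{Z}$. Left-multiplying by $\gc$ and using once more $\gc\,\star_{\!\!\! M} \, \gc^{GD}\,\star_{\!\!\! M} \, \gc=\gc$, the homogeneous term disappears, $\gc\,\star_{\!\!\! M} \, (\mathcal{I}-\gc^{GD}\,\star_{\!\!\! M} \, \gc)\,\star_{\!\!\! M} \, \mathcal{Z}=(\gc-\gc\,\star_{\!\!\! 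M} \, \gc^{GD}\,\star_{\!\!\! M} \, \gc)\,\star_{\!\!\! M} \, \mathcal{Z}=\mathcal{O}$, so $\gc\,\star_{\!\!\! M} \, \xc=\gc\,\star_{\!\!\! M} \, \gc^{GD,*}\,\star_{\!\!\! M} \, \hc=\gc\,\star_{\!\!\! M} \, \gc^*\,\star_{\!\!\! M} \, \hc$ by the previous step.

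Finally, to show this parametrization is exhaustive, I would take an arbitrary solution $\xc$ of (\ref{zzxx}) and compute, using associativity and the definition of $\gc^{GD,*}$, $\gc^{GD}\,\star_{\!\!\! M} \, \gc\,\star_{\!\!\! M} \, \xc=\gc^{GD}\,\star_{\!\!\! M} \, (\gc\,\star_{\!\!\! M} \, \xc)=\gc^{GD}\,\star_{\!\!\! M} \, \gc\,\star_{\!\!\! M} \, \gc^*\,\star_{\!\!\! M} \, \hc=\gc^{GD,*}\,\star_{\!\!\! M} \, \hc$; hence $\xc=\gc^{GD,*}\,\star_{\!\!\! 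M} \, \hc+(\mathcal{I}-\gc^{GD}\,\star_{\!\!\! M} \, \gc)\,\star_{\!\!\! M} \, \xc$, and the claimed form follows with $\mathcal{Z}=\xc$. I do not expect a genuine obstacle here; the only point requiring care is that every identity above uses solely property (I) of the GD inverse together with $\gc^{GD,*}=\gc^{GD}\,\star_{\!\!\! M} \, \gc\,\star_{\!\!\! M} \, \gc^*$, so the statement remains valid for any admissible choice of $\gc^{GD}\in\gc\{GD\}$.
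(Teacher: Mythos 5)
Your proposal is correct and follows essentially the same route as the paper's proof: verify that $\gc^{GD,*}\,\star_{\!\!\! M} \,\hc$ solves the system via $\gc\,\star_{\!\!\! M} \,\gc^{GD}\,\star_{\!\!\! M} \,\gc=\gc$, annihilate the term $(\mathcal{I}-\gc^{GD}\,\star_{\!\!\! M} \,\gc)\,\star_{\!\!\! M} \,\mathcal{Z}$ by the same identity, and recover the general form of a solution by writing $\xc=\gc^{GD,*}\,\star_{\!\!\! M} \,\hc+(\mathcal{I}-\gc^{GD}\,\star_{\!\!\! M} \,\gc)\,\star_{\!\!\! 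M} \,\xc$. Your version is in fact slightly more explicit than the paper's in justifying the exhaustiveness step.
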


\begin{proof}
Obviously, $\gc^{GD}\,\star_{\!\!\! M} \, \gc\,\star_{\!\!\! M} \, \gc^*\,\star_{\!\!\! M} \, \hc$ satisfies (\ref{zzxx}).
Hence, the consistency of (\ref{zzxx}) is confirmed.
Now,
$$\gc\,\star_{\!\!\! M} \, \xc
=\gc\,\star_{\!\!\! M} \, \gc^{GD,*} \,\star_{\!\!\! M} \, \hc+\gc\,\star_{\!\!\! M} \, (\mathcal{I}-\gc^{GD} \,\star_{\!\!\! M} \, \gc)\,\star_{\!\!\! M} \, \mathcal{Z}=\gc\,\star_{\!\!\! M} \, \gc^*\,\star_{\!\!\! M} \, \hc.$$
Moreover, if $\xc$ represents a solution to (\ref{zzxx}), then
$$\xc=\gc^{GD,*}\,\star_{\!\!\! M} \, \hc+(\mathcal{I}-\gc^{GD}\,\star_{\!\!\! M} \, \gc)\,\star_{\!\!\! M} \, \xc.$$
In this way, the set of all solutions to (\ref{zzxx}) is specified by
$$\xc=\gc^{GD,*}\,\star_{\!\!\! M} \, \hc+(\mathcal{I}-\gc^{GD}\,\star_{\!\!\! M} \, \gc)\,\star_{\!\!\! M} \, \mathcal{Z}.$$
\end{proof}

\begin{example}
Let $\gc\in\mathbb{C}^{3\times3\times3},~\hc\in\mathbb{C}^{3\times 1\times3},~M\in\mathbb{C}^{3\times3},~\mathcal{Z}\in\mathbb{C}^{3\times1\times3}$ with entries
$$(\mathcal A)^{(1)}=
\begin{bmatrix}
3&0&-1\\
1&-2&0\\
1&0&0
\end{bmatrix},~
(\mathcal A)^{(2)}=
\begin{bmatrix}
2&0&0\\
1&0&1\\
1&0&2
\end{bmatrix},~
(\mathcal A)^{(3)}=
\begin{bmatrix}
1&-2&-3\\
0&-2&-1\\
0&-2&-1
\end{bmatrix},
$$
$$
\mathcal B^{(1)}=
\begin{bmatrix}
4\\
3\\
2
\end{bmatrix},~
\mathcal B^{(2)}=
\begin{bmatrix}
3\\
1\\
0
\end{bmatrix},~
\mathcal B^{(3)}=
\begin{bmatrix}
3\\
0\\
0
\end{bmatrix}, \ \mathcal Z^{(1)}=
\begin{bmatrix}
z_{11}\\
z_{12}\\
z_{13}
\end{bmatrix},~
\mathcal Z^{(2)}=
\begin{bmatrix}
z_{21}\\
z_{22}\\
z_{23}
\end{bmatrix},~
\mathcal Z^{(3)}=
\begin{bmatrix}
z_{31}\\
z_{32}\\
z_{33}
\end{bmatrix}, \
M=
\begin{bmatrix}
1&0&-1\\
0&1&0\\
0&1&-1
\end{bmatrix}.
$$
Firstly, we get the GD and GD-Star inverses by using Algorithm \ref{al} and Algorithm \ref{ao}, that is,
$$
(\gc^{GD})^{(1)}=
\begin{bmatrix}
1.125&1.125&-1.375\\
-0.75&-2.75&3\\
0.125&1.875&-1.125
\end{bmatrix},$$
$$(\gc^{GD})^{(2)}=
\begin{bmatrix}
0.5&0&0\\
0&0&0.25\\
-0.25&0&0.5
\end{bmatrix},~
(\gc^{GD})^{(3)}=
\begin{bmatrix}
1&1&-1.5\\
-1&-2&2.75\\
0.25&1&-1
\end{bmatrix},
$$
$$
(\gc^{GD,*})^{(1)}=
\begin{bmatrix}
2&0&0\\
1.5&-0.75&2.25\\
-2&0&-1
\end{bmatrix},$$
$$(\gc^{GD,*})^{(2)}=
\begin{bmatrix}
2&1&1\\
0.5&0.75&1.25\\
0&1&2
\end{bmatrix},~
(\gc^{GD,*})^{(3)}=
\begin{bmatrix}
-1&-1&-2\\
-0.5&-0.75&0.25\\
-3&-1&-1
\end{bmatrix}.
$$
Then, we calculate $\xc=\gc^{GD,*}
\,\star_{\!\!\! M} \, \hc+(\mathcal{I}-\gc^{GD}\,\star_{\!\!\! M} \, \gc)\,\star_{\!\!\! M} \, \mathcal{Z}$ by Theorem \ref{to}, where
$$
\mathcal X^{(1)}=\begin{bmatrix}
\frac12z_{11}-\frac12z_{12}-\frac12z_{13}+z_{23}+\frac12z_{31}+\frac12z_{32}+\frac32z_{33}+15\\
\frac14z_{21}+z_{22}-z_{23}-\frac12z_{31}+\frac12z_{33}+\frac{27}4\\
-\frac12z_{11}+\frac12z_{12}+\frac12z_{13}+\frac12z_{31}-\frac12z_{32}-\frac12z_{33}+3
\end{bmatrix},$$
$$
\mathcal X^{(2)}=\begin{bmatrix}
7\\
-\frac14z_{21}+z_{22}-\frac12z_{23}+\frac94\\
1
\end{bmatrix},~
\mathcal X^{(3)}=
\begin{bmatrix}
-z_{21}+z_{23}+z_{31}+z_{33}+5\\
\frac14z_{21}+z_{22}-z_{23}-\frac12z_{31}+\frac12z_{33}+\frac34\\
-1
\end{bmatrix}.
$$
\end{example}

\section{Conclusion}

In this work, the GD inverse of tensors under the M-product is presented and investigated.
Firstly, an equivalent expression was proposed applying the core nilpotent decomposition.
The corresponding numerical algorithm for compute this inverse is established.
Subsequently, the reverse order law of the GD inverse is described in detail.
Next, the GDMP inverse of tensors is further explored, providing the corresponding effective numerical algorithms.
The research results demonstrate the properties of the GDMP inverse.
The reverse-order law and forward-order law of the GDMP inverse are also studied.
Finally, the GD-Star inverse under the M-product operation is explained, and a numerical calculation method for the GD-Star inverse is established, along with relevant properties.
Based on the above content, solutions for the GD inverse, the GDMP inverse, and the GD-star inverse of multilinear equations are also provided, and an example program is developed to calculate the corresponding GD-star inverse solution.

\medskip
\noindent {\bf Funding.} Hongwei Jin is supported by the Special Fund for Science and Technological Bases and Talents of Guangxi (No. GUIKE AD21220024).
Predrag Stanimirovi\'c is supported by the Ministry of Science, Technological~Development and Innovation, Republic of Serbia, Contract No. 451-03-65/2024-03/200124 and
and by the Science Fund~of~the~Repub\-lic of Serbia, Grant No.~7750185, Quantitative Automata Models: Fundamental Problems and Applications -- QUAM.

\smallskip
\noindent {\bf Conflict of interest}~~~No potential conflict of interest was reported by the authors.

\smallskip
\noindent {\bf Acknowledgements.} The authors would like to thank the editor and the anonymous reviewers for their valuable comments, which
have significantly improved the paper.



\end{document}